\newtheorem{theorem}{Theorem}[section]
\newtheorem{lemma}[theorem]{Lemma}
\newtheorem{proposition}[theorem]{Proposition}
\newtheorem{corollary}[theorem]{Corollary}
\newtheorem{hypothesis}[theorem]{Hypothesis}
\theoremstyle{remark}
\newtheorem*{acknowledgments}{Acknowledgments}
\newcommand{\et}{\quad\mbox{and}\quad}
\newcommand{\Adj}{\text{Adj}}
\newcommand{\bN}{\mathbb{N}}
\newcommand{\bQ}{\mathbb{Q}}
\newcommand{\bR}{\mathbb{R}}
\newcommand{\bZ}{\mathbb{Z}}
\newcommand{\cL}{{\mathcal{L}}}
\newcommand{\gA}{{\mathfrak{A}}}
\newcommand{\gB}{{\mathfrak{B}}}
\newcommand{\gM}{{\mathfrak{M}}}
\newcommand{\gS}{{\mathfrak{S}}}
\newcommand{\gX}{{\mathfrak{X}}}
\newcommand{\SL}{\mathrm{SL}}
\newcommand{\ud}{\mathbf{d}}
\newcommand{\ui}{\mathbf{i}}
\newcommand{\uj}{\mathbf{j}}
\newcommand{\un}{\mathbf{n}}
\newcommand{\ux}{\mathbf{x}}
\newcommand{\uX}{\mathbf{X}}
\newcommand{\tM}{{\,{^t}\hskip-2pt M}}
\newcommand{\QgX}{\bQ\big[\gX^{(0)},\,\gX^{(-1)}\big]}
\newcommand{\QX}{\bQ[\uX,\uX^*]}
\begin{document}

\baselineskip=16.8pt

\title[On the ring of approximation triples]
{On the ring of approximation triples\\ attached to a class of
extremal real numbers}

\author{Damien ROY}
\address{
   D\'epartement de Math\'ematiques\\
   Universit\'e d'Ottawa\\
   585 King Edward\\
   Ottawa, Ontario K1N 6N5, Canada}
\email{droy@uottawa.ca}
\author{\'Eric Villani}
\address{
   D\'epartement de Math\'ematiques\\
   Universit\'e d'Ottawa\\
   585 King Edward\\
   Ottawa, Ontario K1N 6N5, Canada}
\email{evillani@uottawa.ca}
\subjclass[2000]{Primary 11J13; Secondary 05A15, 13A02}
\thanks{Work partially supported by NSERC and CICMA}

\begin{abstract}
We attach a ring of sequences to each number from a certain class of
extremal real numbers, and we study the properties of this ring both
from an analytic point of view by exhibiting elements with specific
behaviors, and also from an algebraic point of view by identifying
it with the quotient of a polynomial ring over $\bQ$. The link
between these points of view relies on combinatorial results of
independent interest. We apply this theory to estimate the dimension
of a certain space of sequences satisfying prescribed growth
constrains.
\end{abstract}

\maketitle


\section{Introduction}
\label{sec:intro}


Let $\gamma= (1+\sqrt{5})/2$ denote the golden ratio.  In \cite{DS},
H.~Davenport and W.~M.~Schmidt proved that, for each real number
$\xi$ which is neither rational nor quadratic irrational, there
exists a constant $c>0$ with the property that, for arbitrarily
large real numbers $X$, the system of inequalities
\begin{equation}
 \label{ineq:DS}
 |x_0|\le X,
 \quad
 |x_0\xi-x_1| \le c X^{-1/\gamma},
 \quad
 |x_0\xi^2-x_2| \le c X^{-1/\gamma}
\end{equation}
has no non-zero solution $\ux = (x_0,x_1,x_2) \in \bZ^3$. Because of
this, we say that a real number $\xi$ is \emph{extremal} if it is
neither rational nor quadratic irrational and if there exists a
constant $c'>0$ such that the system \eqref{ineq:DS} with $c$
replaced by $c'$ has a non-zero solution $(x_0,x_1,x_2) \in \bZ^3$
for each $X\ge 1$.  The existence of such numbers is established in
\cite{Ra,Rb}, showing in particular that the exponent $1/\gamma$ in
the result of Davenport and Schmidt is best possible.  Among these
numbers are all real numbers whose continued fraction expansion is
the infinite Fibonacci word constructed on an alphabet consisting of
two different positive integers \cite{Ra} (or a generalized such
word constructed on two non-commuting words in positive integers
\cite{Re}).  This connection with symbolic dynamics is extended by
M.~Laurent and Y.~Bugeaud in \cite{BL}, and stressed even further in
recent work of S.~Fischler \cite{Fa,Fb}.

Schmidt's subspace theorem implies that any extremal real number is
transcendental (see \cite[Chap.~VI, Theorem 1B]{Sc}).  Using a
quantitative version of the subspace theorem, B.~Adamczewski and
Y.~Bugeaud even produced a measure of transcendence for theses
numbers, showing that, in terms of Mahler's classification, they are
either $S$ or $T$ numbers \cite[Theorem 4.6]{AB}.  The purpose of
the present paper is to provide tools which may eventually lead to
sharper measures of approximation to extremal real numbers either by
all algebraic numbers or by more restricted types of algebraic
numbers (like in \cite{Rc}).

As shown in \cite{Rb}, any extremal real number comes with rigid
sequences of integer triples $(x_0,x_1,x_2)$ satisfying a stronger
approximation property than that required by \eqref{ineq:DS}. In the
next section, we show that, for each extremal real number in some
large family, this naturally gives rise to a finitely generated ring
of sequences over $\bQ$.  We study this ring in Sections
\ref{sec:ring} and \ref{sec:bideg}, both from an analytic point of
view by exhibiting elements with specific behaviors, and also from
an algebraic point of view by showing that it is isomorphic to the
quotient of a polynomial ring in six variables over $\bQ$ by an
ideal $I$ with three explicitly given generators. The link between
these points of view relies on two combinatorial results of
independent interest that are stated at the beginning of Sections
\ref{sec:ring} and \ref{sec:bideg}, and proved in Section
\ref{sec:comb}.  In Section \ref{sec:applic}, we apply this theory
to estimate the dimension of a certain space of sequences with
restricted growth. Following a suggestion of Daniel Daigle, we
conclude in Section \ref{sec:complement} with a complementary result
showing that the ideal $I$ mentioned above is a prime ideal of rank
$3$ and thus, that the ring of sequences attached to the extremal
real numbers under study is an integral domain of transcendence
degree $3$.

%
%

\section{The ring of approximation triples}
 \label{sec:ring}


\subsection{A combinatorial result}
 \label{subsec:2.1}
We denote by $\bN$ the set of non-negative integers, and by $\bN^*=
\bN\setminus\{0\}$ the set of positive integers. We also denote by
$f\colon\bZ\to \bZ$ the function satisfying
\begin{equation}
 \label{def:f}
 f(0)=f(1)=1
 \et
 f(i+2)=f(i+1)+f(i)
 \quad \text{for each $i\in\bZ$,}
\end{equation}
so that $(f(i))_{i\ge 0}$ is simply the Fibonacci sequence.  In
order not to interrupt the flow of the discussion later, we start by
stating the following crucial combinatorial result whose proof is
postponed to Section \ref{sec:comb} and whose relevance will be made
clear shortly.

\begin{theorem}
 \label{thm:comb_deg}
Let $d\in\bN$.  For each $s\in\bN$, denote by $\chi_d(s)$ the number
of points $(m,n)\in\bZ^2$ for which the conditions
\begin{equation}
 \label{conditions:thm:comb_deg}
 (m,n) = \sum_{k=1}^s \big( f(-i_k),\, f(-i_k-1) \big)
 \et
 \sum_{k=1}^s f(i_k) \le d
\end{equation}
admit a solution integers $0\le i_1\le\cdots\le i_s$, and $s$ is
maximal with this property.  Then, 
\begin{equation}
 \label{formula:chi_d}
 \chi_d(s)
 = \begin{cases}
     2s+1 &\text{if\, $0\le s < d$,}\\
     d+1 &\text{if\, $s = d$,}\\
     0 &\text{if\, $s>d$.}
   \end{cases}
\end{equation}
\end{theorem}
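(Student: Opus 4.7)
The plan is first to dispatch the two boundary cases, then to address $0 \le s < d$ via a cumulative count. For $s > d$, each $f(i_k) \ge 1$ forces $\sum_{k=1}^s f(i_k) \ge s > d$, giving $\chi_d(s) = 0$. For $s = d$, the weight bound combined with $f(i_k) \ge 1$ forces every $f(i_k) = 1$, hence $i_k \in \{0, 1\}$; writing $a$ and $b$ for the multiplicities of $0$ and $1$, we have $(m,n) = (a, b)$ with $a + b = d$, yielding $d + 1$ distinct points, each maximal at $s = d$ since a $(d + 1)$-term representation would need weight $\ge d + 1$.

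For $0 \le s < d$, I would pass to the cumulative count
\[
 \bar{\chi}_d(s) := \#\{(m,n) \in \bZ^2 : \text{some representation uses $\ge s$ summands and weight $\le d$}\}.
\]
Since $\chi_d(s) = \bar{\chi}_d(s) - \bar{\chi}_d(s+1)$, the theorem reduces to the single identity $\bar{\chi}_d(s) = d^2 + d + 1 - s^2$ for $0 \le s \le d$. The algebraic engine is the relation $v_{i-1} = v_i + v_{i+1}$ (for $i \ge 1$), where $v_i := (f(-i), f(-i-1))$; it follows directly from the Fibonacci recurrence $f(-i+1) = f(-i) + f(-i-1)$. This identity supplies local moves between representations: swapping $v_{i-1}$ for $v_i + v_{i+1}$ raises the number of summands by $1$ and the weight by $2 f(i)$, and any two representations of the same point are connected by a finite such chain. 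The linear map $\sigma(m, n) := (n, m - n)$, which sends $v_i \mapsto v_{i+1}$, gives an additional symmetry useful for inductive reductions.

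I would prove the target formula by induction on $d$, tracking which points first become reachable as $d$ grows. For $s = 0$ this amounts to showing that $2d$ new achievable points enter at each step: one can verify by direct computation that these decompose as the $d + 1$ lattice points $(a, b)$ with $a + b = d$ and $a, b \ge 0$, plus $d - 1$ further points involving some $v_i$ with $i \ge 2$. The extension to $s > 0$ requires detecting precisely when a previously reachable point first acquires a representation with one more summand, which is governed by the admissibility of local expansions under the weight constraint.

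The principal obstacle is that minimum-$s$ representations are \emph{not} unique: for instance $2 v_2 = v_1 + v_4$ provides two distinct reduced representations of $(2, -2)$. This precludes any direct Zeckendorf-style normal-form argument, and the heart of the proof must be a careful classification of \emph{maximum-$s$} representations. I expect these can be characterized by a saturation criterion (no expansion $v_j \mapsto v_{j+1} + v_{j+2}$ at an index $j$ with positive multiplicity fits within the weight budget) and then enumerated by combining this classification with the shift symmetry $\sigma$. Once such a structural description of maximum-$s$ representations is in hand, the identity $\bar{\chi}_d(s) = d^2 + d + 1 - s^2$ should follow by tracking the interaction between the local moves and the weight bound as $d$ and $s$ vary.
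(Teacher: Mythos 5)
Your treatment of the two boundary cases is correct: for $s>d$ the weight bound together with $f(i_k)\ge 1$ is immediately violated, and for $s=d$ the same bound forces every $i_k\in\{0,1\}$, giving exactly the $d+1$ points $(a,b)$ with $a+b=d$, $a,b\ge 0$, each of which indeed has maximal size $d$ because a size-$(d+1)$ representation would need weight $\ge d+1$. Reformulating the range $0\le s<d$ as the single cumulative identity $\bar\chi_d(s)=d^2+d+1-s^2$ is also a sound move, and it matches the intended answer.

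However, for the main case the proposal stops at a plan rather than a proof, and you say so yourself: ``I expect these can be characterized by a saturation criterion\ldots Once such a structural description of maximum-$s$ representations is in hand, the identity\ldots should follow.'' The classification of maximum-size representations is the entire content of the theorem in the range $0\le s<d$, and it is precisely what is absent. You have correctly identified the local move $v_{i-1}=v_i+v_{i+1}$ and the obstacle that reduced (minimum-size) representations are not unique, but you have not shown what the maximum-size representations subject to $d(\mathbf i)\le d$ actually look like, nor why their associated points $(m,n)$ are counted by your cumulative formula. The induction on $d$ is likewise only gestured at; the assertion that ``$2d$ new achievable points enter at each step'' presupposes the description $E_d=\{m+n/\gamma\ge 0: |m|+|n|\le d\}$, which itself needs an argument (it follows from the sharp lower bound $d(\mathbf i)\ge |m|+|n|$ for every representation, attained by representations with all $i_k$ of the same parity or all in $\{0,1\}$ --- but you do not prove this bound or identify the extremal representations).

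What is missing is exactly what the paper supplies. It first proves that every point $\alpha\in E_d\setminus\{0\}$ admits a weight-$\le d$ representation of maximal size $s_d(\alpha)$ that is a \emph{quad}, i.e.\ supported on three consecutive indices $\{i,i+1,i+2\}$. The proof of this reduction is a local-move argument of the kind you anticipate: if $i_s\ge i_1+3$ one can re-expand the pair $\gamma^{-i_1}+\gamma^{-i_s}$ into a same-parity block, strictly decreasing the weight or strictly increasing the size, contradicting extremality. With the quad reduction in place, the sets $Q_\alpha$ of quads representing a fixed $\alpha$ and $Q_{(d_1,d_2)}$ of quads of a fixed bi-degree are each totally ordered by size with consecutive sizes, governed by explicit successor maps; these are the ``normal forms'' your proposal hoped to find. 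The theorem then follows by a clean double count (induction on $d$), comparing $E_d(s)$ with $E_{d-1}(s)$ by identifying which quads of total degree exactly $d$ contribute a point whose maximal size increases or first appears. Your sketch would need to recover this quad structure, or an equivalent normal form, before the cumulative identity $\bar\chi_d(s)=d^2+d+1-s^2$ can actually be verified; as written, the argument for $0\le s<d$ is incomplete.
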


Here, and throughout the rest of this paper, we agree that an empty
sum is zero, so that for $(m,n)=(0,0)$, the conditions
\eqref{conditions:thm:comb_deg} are satisfied with $s=0$.

For our purposes, we need to recast this result in the following
context.  Consider the subring $\bZ[\gamma]$ of $\bR$ generated by
$\gamma$.  Since $\gamma = 1+1/\gamma$ and $(1/\gamma)^2 = 1-
(1/\gamma)$, we note that $\bZ[\gamma] = \bZ[1/\gamma] = \bZ \oplus
\bZ\cdot (1/\gamma)$ is a free $\bZ$-module with basis
$\{1,1/\gamma\}$.  The formulas
\begin{equation}
 \label{formulas:gamma^-i}
 \gamma^{-i} = f(-i) + f(-i-1)/\gamma
             = (-1)^i \big( f(i-2) - f(i-1)/\gamma \big),
\end{equation}
which follow from a quick recurrence argument, show that, for any
point $\alpha = m+n/\gamma \in \bZ[\gamma]$, the conditions
\eqref{conditions:thm:comb_deg} are equivalent to
\begin{equation}
 \label{conditions:decomp_alpha}
 \alpha = \gamma^{-i_1} + \cdots + \gamma^{-i_s}
 \et
 f(i_1) + \cdots + f(i_s) \le d.
\end{equation}
For each $d\in \bN$, let $E_d$ denote the set of points
$\alpha\in\bZ[\gamma]$ for which these conditions admit a solution
in integers $0\le i_1\le\cdots\le i_s$ for some $s\in\bN$ and, for
each $\alpha\in E_d$, let $s_d(\alpha)$ denote the largest value of
$s$ for which such a solution exists.  Then, Theorem
\ref{thm:comb_deg} can be restated by saying that, for each pair of
integers $d,s\in\bN$, the number of elements $\alpha$ of $E_d$ with
$s_d(\alpha)=s$ is the integer $\chi_d(s)$ given by
\eqref{formula:chi_d}.  It is in this form that Theorem
\ref{thm:comb_deg} will be proved in Section \ref{sec:comb},
together with a more compact description of the sets $E_d$.


\subsection{A class of extremal numbers}
 \label{subsec:2.2}
We first recall that a
real number $\xi$ is extremal if and only if there exists an
unbounded sequence of points $\ux_k = (x_{k,0}, x_{k,1}, x_{k,2})$
in $\bZ^3$, indexed by integers $k\ge 1$ in $\bN^*$, and a constant
$c_1\ge 1$ such that, for each $k\ge 1$, the first coordinate
$x_{k,0}$ of $\ux_k$ is non-zero and we have
\begin{itemize}
\item[E1:]
 $c_1^{-1} |x_{k,0}|^\gamma \le |x_{k+1,0}| \le c_1
 |x_{k,0}|^\gamma$,
\item[E2:]
 $\max\big\{ |x_{k,0}\xi-x_{k,1}|,\, |x_{k,0}\xi^2-x_{k,2}| \big\}
 \le c_1 |x_{k,0}|^{-1}$,
\item[E3:]
 $1\le |x_{k,0}x_{k,2}-x_{k,1}^2| \le c_1$,
\item[E4:]
 $1\le |\det(\ux_k,\ux_{k+1},\ux_{k+2})| \le c_1$.
\end{itemize}
This follows from Theorem 5.1 of \cite{Rb} upon noting that the
condition E2 forces the maximum norm of $\ux_k$ to behave like
$|x_{k,0}|$.  As in \cite{Rb}, it is convenient to identify each
triple $\ux=(x_0,x_1,x_2)$ with coefficients in a commutative ring
with the symmetric matrix
\[
 \ux = \begin{pmatrix}  x_0 &x_1\\ x_1 &x_2 \end{pmatrix}.
\]
Then, the condition E3 reads simply as $1\le |\det(\ux_k)| \le c_1$.
We also recall that, for a given extremal real number $\xi$, the
corresponding sequence $(\ux_k)_{k\ge 1}$ is unique up to its first
terms and up to multiplication of its terms by non-zero rational
numbers with bounded numerators and denominators (see Proposition
4.1 of \cite{Rd}).  Moreover, for such $\xi$ and such a sequence
$(\ux_k)_{k\ge 1}$ viewed as symmetric matrices, Corollary 4.3 of
\cite{Rd} ensures the existence of a non-symmetric and
non-skew-symmetric $2\times 2$ matrix
\begin{equation}
 \label{formula:M}
 M = \begin{pmatrix} a_{1,1} &a_{1,2}\\ a_{2,1} &a_{2,2} \end{pmatrix}
\end{equation}
with integer coefficients such that, for each sufficiently large
$k\ge 1$, the matrix $\ux_{k+2}$ is a rational multiple of
$\ux_{k+1}M_{k+1}\ux_k$ where
\begin{equation}
 \label{formula:Mk}
 M_{k+1} =
   \begin{cases}
      M &\text{if $k$ is odd,}\\
     \tM &\text{if $k$ is even,}
   \end{cases}
\end{equation}
and where $\tM$ denotes the transpose of $M$.  The present paper
deals with a special class of extremal numbers.

\begin{hypothesis}
 \label{hyp}
In the sequel, we fix an extremal real number $\xi$, a corresponding
sequence $(\ux_k)_{k\ge 1}$ and a corresponding matrix $M$
satisfying the additional property that, for each $k\ge 1$, we have
\begin{equation}
 \label{formula:hyp}
  \det(\ux_k) = 1
  \et
  \ux_{k+2}=\ux_{k+1}M_{k+1}\ux_k.
\end{equation}
\end{hypothesis}

The first condition $\det(\ux_k)=1$ is restrictive as there exist
extremal real numbers with no corresponding sequence $(\ux_k)_{k\ge
1}$ in $\SL_2(\bZ)$, but it is not empty as it is fulfilled by any
real number whose continued fraction expansion is given by an
infinite Fibonacci word constructed on two non-commuting words in
positive integers, provided that both words have even length (see
\cite{Re}). The second condition however is no real additional
restriction. It is achieved by omitting the first terms of the
sequence $(\ux_k)_{k\ge 1}$ if necessary, by choosing $M$ so that
$\ux_3 = \ux_2 M \ux_1$ and then by multiplying recursively each
$\ux_k$ with $k\ge 4$ by $\pm 1$ so that the second equality in
\eqref{formula:hyp} holds for each $k\ge 1$.  In particular, we have
$\det(M)=1$. Since $\tM\neq \pm M$, this in turn implies that we
have $a_{1,1}\neq 0$ or $a_{2,2}\neq 0$.


\subsection{A subring of the ring of sequences}
 \label{subsec:2.3}
Let $S$ denote the ring of sequences of real numbers $(a_k)_{k\ge
1}$ indexed by the set $\bN^*$ of positive integers, with
component-wise addition and multiplication, and let $\gS$ denote the
quotient of $S$ by the ideal $S_0$ of sequences with finitely many
non-zero terms.  Two sequences $(a_k)_{k\ge 1}$ and $(b_k)_{k\ge 1}$
in $S$ thus represent the same element of $\gS$ if and only if
$a_k=b_k$ for each sufficiently large integer $k$.  We view $\bR$ as
a subring of $\gS$ by identifying each $x\in\bR$ with the image of
the constant sequence $(x)_{k\ge 1}$ modulo $S_0$.  This gives $\gS$
the structure of an $\bR$-algebra.  By restriction of scalars, we
may also view $\gS$ as a $\bQ$-algebra.  In particular, given
elements $\gA_1,\dots,\gA_\ell$ of $\gS$ we can form the
sub-$\bQ$-algebra $\bQ[\gA_1,\dots,\gA_\ell]$ that they generate.

For each $i\in\bZ$ and each $j=0,1,2$, we define an element
$\gX_j^{(i)}$ of $\gS$ by
\begin{equation}
 \label{formula:gX_j^i}
 \gX_j^{(i)} = \text{\ class of\ }\big( x_{2k+i,j} \big)_{k\ge 1}
               \text{\ in $\gS$,}
\end{equation}
where for definiteness we agree that $x_{2k+i,j}=0$ when $2k+i \le
0$, although the resulting element of $\gS$ is independent of this
choice.  Clearly, $\gX_j^{(i+2)}$ differs simply from $\gX_j^{(i)}$
by a shift, but nevertheless they are quite different from the
algebraic point of view. For each $i\in\bZ$, we also define a triple
\begin{equation}
 \label{formula:gX}
 \gX^{(i)}
  = \big( \gX_0^{(i)},\, \gX_1^{(i)},\, \gX_2^{(i)} \big)
    \in \gS^3.
\end{equation}
Identifying these triples with $2\times 2$ symmetric matrices
according to our general convention, and using \eqref{formula:Mk} to
extend the definition of $M_{k+1}$ to all integers $k$, Hypothesis
\ref{hyp} gives
\begin{equation}
 \label{formula:hyp_seq}
  \det(\gX^{(i)}) = 1
  \et
  \gX^{(i+2)} = \gX^{(i+1)} M_{i+1} \gX^{(i)}
\end{equation}
for each $i\in \bZ$. Our goal in this paper is to study the
sub-$\bQ$-algebra $\bQ[\gX^{(0)},\,\gX^{(-1)}]$ of $\gS$.  For this
purpose, we form the polynomial ring $\QX$ in six indeterminates
\[
 \uX = (X_0, X_1, X_2)
 \et
 \uX^* = (X_0^*, X_1^*, X_2^*).
\]
We first note that $\QgX$ contains the coordinates of $\gX^{(i)}$
for each $i\in\bZ$.

\begin{lemma}
 \label{lemma:bi-degree_gX}
For each $i\in\bZ$, the coordinates of $\gX^{(-i)}$ can be written
as values at the point $\big(\gX^{(0)},\,\gX^{(-1)}\big)$ of
polynomials of $\QX$ that are separately homogeneous of degree
$|f(i-2)|$ in $\uX$ and homogeneous of degree $|f(i-1)|$ in $\uX^*$.
\end{lemma}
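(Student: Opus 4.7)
The plan is a two-directional induction based on the two relations in \eqref{formula:hyp_seq}, starting from the trivial base cases $i\in\{0,1\}$, where the coordinates of $\gX^{(0)}$ and $\gX^{(-1)}$ are themselves $\uX$ and $\uX^*$, with bidegrees $(1,0)$ and $(0,1)$ matching $(|f(-2)|,|f(-1)|)$ and $(|f(-1)|,|f(0)|)$ respectively.

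For the inductive step extending to $i\le -1$ (positive indices $-i\ge 1$), I would apply the recursion
\[
 \gX^{(-i)} \;=\; \gX^{(-i-1)}\, M_{-i-1}\, \gX^{(-i-2)}
\]
directly: by the inductive hypothesis both factors on the right are polynomial triples in $(\uX,\uX^*)$ with the expected bidegrees, and the entries of $M_{-i-1}$ are rational constants. For the inductive step extending to $i\ge 2$ (negative indices $-i\le -2$), I would invert the same recursion. Using $\det(\gX^{(-i+1)})=1$ so that $(\gX^{(-i+1)})^{-1}=\Adj(\gX^{(-i+1)})$ has entries of the same bidegree as $\gX^{(-i+1)}$, together with $\det(M)=\det(\tM)=1$ so that $M_{-i+1}^{-1}$ has integer entries, I would rewrite the recursion as
\[
 \gX^{(-i)} \;=\; M_{-i+1}^{-1}\, \Adj\bigl(\gX^{(-i+1)}\bigr)\, \gX^{(-i+2)}.
\]
In each case matrix multiplication adds the bidegrees coordinatewise, and one reads off the three entries of the (symmetric) result to obtain $\gX_0^{(-i)}, \gX_1^{(-i)}, \gX_2^{(-i)}$ as polynomial evaluations.

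What remains is the bookkeeping: one must check that the induction hypothesis, combined with the bilinear matrix identity, yields exactly the prescribed bidegree $(|f(i-2)|,|f(i-1)|)$. This reduces to the Fibonacci identity $|f(n-1)|+|f(n-2)|=|f(n)|$ for the relevant values of $n$. For $n\ge 1$, where $f$ is positive, this is the usual Fibonacci recurrence. For $n\le -2$, the values $f(-2),f(-3),f(-4),\dots = 1,-1,2,-3,5,\dots$ alternate in sign, and the backward recurrence $f(n-2)=f(n)-f(n-1)$ then gives $|f(n-2)|=|f(n)|+|f(n-1)|$ directly. The sign analysis for negative $n$ is the only genuinely nontrivial point; the rest of the argument is routine $2\times 2$ matrix manipulation using the adjugate formula.
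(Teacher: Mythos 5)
Your proof is correct and follows essentially the same two-directional induction as the paper: forward via $\gX^{(j+2)}=\gX^{(j+1)}M_{j+1}\gX^{(j)}$ and backward via the adjugate (your factored form $M_{-i+1}^{-1}\Adj(\gX^{(-i+1)})$ is the same as the paper's $\Adj(\gX^{(-i+1)}M_{-i+1})$). The only cosmetic difference is that the paper phrases the inductive bi-degrees as $(f(i-2),f(i-1))$ for $\gX^{(-i)}$ and $(f(i),f(i-1))$ for $\gX^{(i)}$ with $i\ge 0$, so that only nonnegative Fibonacci values appear and the conversion to $|f(\cdot)|$ via \eqref{formulas:gamma^-i} happens once at the end, whereas you carry the absolute values through the induction and therefore need the alternating-sign observation for negative indices.
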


\begin{proof}
By \eqref{formula:hyp_seq}, the coordinates of $\gX^{(i+2)}$ are
bilinear forms in $\big(\gX^{(i+1)},\,\gX^{(i)}\big)$ for each
$i\in\bZ$.  Since each matrix $\gX^{(i)}M_i$ has determinant $1$,
its inverse is its adjoint. Thus we find
\[
 \gX^{(-i-2)} = \Adj\big( \gX^{(-i-1)} M_{-i-1} \big)\, \gX^{(-i)},
\]
showing that, for each $i\in\bZ$, the coordinates of $\gX^{(-i-2)}$
are also bilinear in $\big(\gX^{(-i-1)},\,\gX^{(-i)}\big)$. By
recurrence, this implies that, for each $i\ge 0$, the coordinates of
$\gX^{(-i)}$ (resp.\ $\gX^{(i)}$) are the values at
$\big(\gX^{(0)},\, \gX^{(-1)}\big)$ of bi-homogeneous polynomials of
bi-degree $(f(i-2),f(i-1))$ (resp.\ $(f(i),f(i-1))$).  The
conclusion follows since, for the same values of $i$, the formulas
\eqref{formulas:gamma^-i} give $f(i)=|f(-i-2)|$ and
$f(i-1)=|f(-i-1)|$.
\end{proof}

The preceding result implicitly uses the natural surjective ring
homomorphism
\begin{equation}
 \label{def:pi}
 \begin{matrix}
 \pi\colon &\QX &\longrightarrow & \QgX \\[2pt]
 &P(\uX,\uX^*) &\longmapsto &P(\gX^{(0)},\gX^{(-1)}).
 \end{matrix}
\end{equation}
Our first goal is to describe the kernel of this map.

\begin{lemma}
 \label{lemma:I}
The kernel of $\pi$ contains the ideal $I$ of $\QX$ generated by the
polynomials
\begin{equation}
 \label{def:gen_I}
 \begin{gathered}
 \det(\uX) - 1 = X_0X_2 - X_1^2 - 1, \\
 \det(\uX^*) - 1 = X^*_0 X^*_2 - (X^*_1)^2 - 1, \\
 \Phi(\uX,\uX^*) =
   a_{1,1} \left| \begin{matrix}
            X^*_0 &X^*_1 \\ X_0 &X_1
            \end{matrix}
     \right|
 + a_{1,2} \left| \begin{matrix}
            X^*_1 &X^*_2 \\ X_0 &X_1
            \end{matrix}
     \right|
 + a_{2,1} \left| \begin{matrix}
            X^*_0 &X^*_1 \\ X_1 &X_2
            \end{matrix}
     \right|
 + a_{2,2} \left| \begin{matrix}
            X^*_1 &X^*_2 \\ X_1 &X_2
            \end{matrix}
     \right|.
 \end{gathered}
\end{equation}
\end{lemma}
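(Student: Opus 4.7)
The plan is to check that each of the three explicit generators of $I$ from \eqref{def:gen_I} lies in $\ker\pi$; since they generate $I$, this will give $I \subseteq \ker\pi$. For the two determinantal generators the verification is immediate: under $\pi$ they map to $\det(\gX^{(0)})-1$ and $\det(\gX^{(-1)})-1$ respectively, and both vanish by the first identity in \eqref{formula:hyp_seq} taken with $i=0$ and $i=-1$.

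For the generator $\Phi$, I would use the second identity in \eqref{formula:hyp_seq} with $i=-1$. Since $k=-1$ is odd, the convention \eqref{formula:Mk} gives $M_0=M$, so that
\[
 \gX^{(1)} \;=\; \gX^{(0)}\,M\,\gX^{(-1)}
\]
as a matrix identity in $\gS$. Under our identification of triples with $2\times 2$ symmetric matrices, $\gX^{(1)}$ is symmetric by construction, hence the product on the right must be symmetric as well; in particular its $(1,2)$- and $(2,1)$-entries coincide in $\gS$.

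The remaining step, which is the only computational one, is to establish the polynomial identity
\[
 (\uX\,M\,\uX^*)_{2,1} - (\uX\,M\,\uX^*)_{1,2} \;=\; \Phi(\uX,\uX^*)
\]
in $\QX$, where $\uX$ and $\uX^*$ are identified with the generic symmetric matrices with entries $X_0,X_1,X_2$ and $X_0^*,X_1^*,X_2^*$. A direct bilinear expansion shows that the coefficient of each entry $a_{i,j}$ of $M$ on the left-hand side is precisely one of the four $2\times 2$ minors displayed in \eqref{def:gen_I}. Specializing at $(\uX,\uX^*)=(\gX^{(0)},\gX^{(-1)})$ and combining with the symmetry observation of the previous paragraph then yields $\pi(\Phi)=0$, as required. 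The only potential source of trouble is the sign- and index-tracking in this final expansion; everything else follows formally from \eqref{formula:hyp_seq} and from our convention identifying triples with symmetric matrices.
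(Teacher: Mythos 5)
Your proposal is correct and follows the same route as the paper's proof: both treat the two determinant generators by the first identity in \eqref{formula:hyp_seq} at $i=0,-1$, and both derive $\pi(\Phi)=0$ from the symmetry of $\gX^{(1)}=\gX^{(0)}M\gX^{(-1)}$. The bilinear expansion you describe (with the sign convention $(\uX M\uX^*)_{2,1}-(\uX M\uX^*)_{1,2}=\Phi(\uX,\uX^*)$) checks out and just makes explicit the step the paper summarizes as ``This fact translates into $\Phi(\gX^{(0)},\gX^{(-1)})=0$.''
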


\begin{proof} The first equality in \eqref{formula:hyp_seq} tells us
that $\det(\gX^{(i)})=1$ for each $i\in\bZ$.  Applying this with
$i=0$ and $i=-1$, we deduce that $\det(\uX)-1$ and $\det(\uX^*)-1$
belong to the kernel of $\pi$.  On the other hand, the second
equality in \eqref{formula:hyp_seq} gives $\gX^{(1)} = \gX^{(0)} M
\gX^{(-1)}$ and so
\[
 \begin{pmatrix}
  \gX_0^{(1)} &\gX_1^{(1)}\\
  \gX_1^{(1)} &\gX_2^{(1)}
 \end{pmatrix}
 =
 \begin{pmatrix}
  \gX_0^{(0)} &\gX_1^{(0)}\\
  \gX_1^{(0)} &\gX_2^{(0)}
 \end{pmatrix}
 \begin{pmatrix} a_{1,1} &a_{1,2}\\ a_{2,1} &a_{2,2} \end{pmatrix}
 \begin{pmatrix}
  \gX_0^{(-1)} &\gX_1^{(-1)}\\
  \gX_1^{(-1)} &\gX_2^{(-1)}
 \end{pmatrix}.
\]
In particular, the matrix product on the right gives rise to a
symmetric matrix.  This fact translates into
$\Phi(\gX^{(0)},\gX^{(-1)}) = 0$, and so $\Phi \in \ker(\pi)$.
\end{proof}

We will see below that $I$ is precisely the kernel of $\pi$.  In
Section \ref{sec:complement} we will provide an alternative proof of
this, suggested by Daniel Daigle, showing moreover that $I$ is a
prime ideal of $\QX$ of rank $3$, and therefore that $\QgX$ is an
integral domain of transcendence degree $3$ over $\bQ$.


\subsection{Asymptotic behaviors}
 \label{subsec:2.4}
The units of $\gS$ are the elements of $\gS$ which are
represented by sequences $(a_k)_{k\ge 1}$ with $a_k\neq 0$ for each
sufficiently large $k$.  We define an equivalence relation $\sim$ on
the group $\gS^*$ of units of $\gS$ by writing $\gA\sim\gB$ when
$\gA$ and $\gB$ are represented respectively by sequences
$(a_k)_{k\ge 1}$ and $(b_k)_{k\ge 1}$ with $\lim_{k\to \infty}
a_k/b_k =1$.  Then, for each $i\in\bZ$, the condition E2 implies
that
\begin{equation}
 \label{estimates:coord_gX}
 \gX_1^{(i)} \sim \xi\, \gX_0^{(i)}
 \et
 \gX_2^{(i)} \sim \xi^2\, \gX_0^{(i)}.
\end{equation}
For this reason, we regard the points $\gX^{(i)}$ as generic
(projective) approximations to the triple $(1,\xi,\xi^2)$ and, in
view of Lemma \ref{lemma:bi-degree_gX}, we say that $\QgX$ is the
ring of approximation triples to $(1,\xi,\xi^2)$. Since the second
formula in \eqref{formula:hyp_seq} gives
\[
 \gX_0^{(i+2)}
  =
 \begin{pmatrix} \gX_0^{(i+1)} &\gX_1^{(i+1)} \end{pmatrix}
 M_{i+1}
 \begin{pmatrix} \gX_0^{(i)}\\ \gX_1^{(i)} \end{pmatrix},
\]
we also find that
\begin{equation}
 \label{estimates:gX0}
 \gX_0^{(i+2)} \sim  \theta\, \gX_0^{(i+1)}\, \gX_0^{(i)}
\end{equation}
where
\begin{equation*}
 \theta
 =
 \begin{pmatrix} 1 &\xi \end{pmatrix}
 M_{i+1}
 \begin{pmatrix} 1\\ \xi \end{pmatrix}
 =
 a_{1,1} + (a_{1,2}+a_{2,1})\xi + a_{2,2}\xi^2
\end{equation*}
is independent of $i$ and non-zero because $\xi$ is transcendental
over $\bQ$ and $M$ is not skew-symmetric.

For each $d\in\bN$, we denote by $\QX_{\le d}$ the subspace of $\QX$
consisting of all polynomials of total degree at most $d$, and by
$\QgX_{\le d}$ its image under the evaluation map $\pi$.  The next
lemma provides a variety of elements of the latter set, with
explicit behavior.

\begin{lemma}
 \label{lemma:gM}
Let $d\in\bN$.  For each $\alpha = m+n/\gamma \in E_d$ and each
integer $j$ with $0\le j \le 2s_d(\alpha)$, there exists an element
$\gM_{\alpha,j}$ of $\QgX_{\le d}$ with
\[
 \gM_{\alpha,j}
 \sim
 \theta^{m+n-s_d(\alpha)} \xi^j
  \big( \gX_0^{(0)} \big)^m
  \big( \gX_0^{(-1)} \big)^n.
\]
\end{lemma}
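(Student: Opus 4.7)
The plan is to construct $\gM_{\alpha,j}$ as an explicit monomial in the entries of the approximation triples $\gX^{(-i_k)}$ attached to a maximal decomposition of $\alpha$. First I would fix integers $0\le i_1\le\cdots\le i_s$ with $s=s_d(\alpha)$ such that $\alpha=\gamma^{-i_1}+\cdots+\gamma^{-i_s}$ and $\sum_k f(i_k)\le d$; such a sequence exists by the very definition of $s_d(\alpha)$. Since $0\le j\le 2s$, I would also write $j=j_1+\cdots+j_s$ with each $j_k\in\{0,1,2\}$, and then set
\[
 \gM_{\alpha,j} \;=\; \prod_{k=1}^{s}\gX_{j_k}^{(-i_k)}.
\]
By Lemma \ref{lemma:bi-degree_gX}, each factor $\gX_{j_k}^{(-i_k)}$ is the image under $\pi$ of a bi-homogeneous polynomial of bi-degree $(|f(i_k-2)|,|f(i_k-1)|)$, and a direct check (for $i_k=0,1$ separately, and via the Fibonacci recurrence for $i_k\ge 2$) shows that this total degree equals $f(i_k)$. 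The product thus has total degree $\sum_k f(i_k)\le d$, so $\gM_{\alpha,j}\in\QgX_{\le d}$.

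The asymptotic analysis hinges on establishing by induction on $|l|$, using \eqref{estimates:gX0} in both directions, the auxiliary identity
\[
 \gX_0^{(l)} \;\sim\; \theta^{f(l+1)-1}\bigl(\gX_0^{(0)}\bigr)^{f(l)}\bigl(\gX_0^{(-1)}\bigr)^{f(l-1)}
 \qquad\text{for every } l\in\bZ.
\]
The base cases $l=0,-1$ are trivial, and the inductive step follows at once by combining $\gX_0^{(l+2)}\sim\theta\,\gX_0^{(l+1)}\gX_0^{(l)}$ with the Fibonacci recurrence $f(l+2)=f(l+1)+f(l)$. Coupled with $\gX_{j_k}^{(-i_k)}\sim\xi^{j_k}\gX_0^{(-i_k)}$ from \eqref{estimates:coord_gX}, taking the product yields
\[
 \gM_{\alpha,j} \;\sim\; \xi^{j}\,\theta^{\sum_k(f(-i_k+1)-1)}\bigl(\gX_0^{(0)}\bigr)^{\sum_k f(-i_k)}\bigl(\gX_0^{(-1)}\bigr)^{\sum_k f(-i_k-1)}.
\]
By the definition of $m$ and $n$, the exponents of $\gX_0^{(0)}$ and $\gX_0^{(-1)}$ are $m$ and $n$. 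A final application of the Fibonacci recurrence, in the form $f(-i_k+1)=f(-i_k)+f(-i_k-1)$, turns the exponent of $\theta$ into $\sum_k(f(-i_k)+f(-i_k-1))-s=m+n-s_d(\alpha)$, giving exactly the asymptotic claimed in the statement.

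I expect the main obstacle to be the careful bookkeeping with Fibonacci numbers at negative indices in the auxiliary identity for $\gX_0^{(l)}$: although signs alternate for $f(-n)$, all the manipulations reduce to a single two-term recurrence, so the induction extends uniformly to every $l\in\bZ$. The only other point worth verifying is that $\gX_0^{(0)}$ and $\gX_0^{(-1)}$ are units of $\gS$, which follows from condition E2, so that raising them to the possibly negative integer powers $m$ and $n$ makes sense inside $\gS^*$.
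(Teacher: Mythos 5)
Your proof is correct and follows essentially the same route as the paper: you build $\gM_{\alpha,j}$ as the product $\prod_{k}\gX_{j_k}^{(-i_k)}$ from a maximal representation of $\alpha$, invoke Lemma~\ref{lemma:bi-degree_gX} for the degree bound, and establish exactly the same auxiliary identity $\gX_0^{(l)}\sim\theta^{f(l+1)-1}(\gX_0^{(0)})^{f(l)}(\gX_0^{(-1)})^{f(l-1)}$ by two-sided induction from \eqref{estimates:gX0}. The two details you add beyond the paper's wording — the explicit check that $|f(i_k-2)|+|f(i_k-1)|=f(i_k)$ for small $i_k$, and the remark that $\gX_0^{(0)},\gX_0^{(-1)}$ are units of $\gS$ so that negative exponents $m,n$ make sense — are both correct and worth making explicit, but they do not change the argument.
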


\begin{proof}
By recurrence, we deduce from \eqref{estimates:gX0} that, for each
$i\in\bZ$, we have
\begin{equation*}
 \gX_0^{(i)}
 \sim
 \theta^{f(i+1)-1}
  \big( \gX_0^{(0)} \big)^{f(i)}
  \big( \gX_0^{(-1)}   \big)^{f(i-1)}.
\end{equation*}
Now, let $\alpha = m+n/\gamma \in E_d$, let $s=s_d(\alpha)$, and let
$j$ be an integer with $0\le j \le 2s$.  By definition, there exist
integers $0 \le i_1 \le \cdots \le i_s$ satisfying
\eqref{conditions:decomp_alpha}.  Choose also integers
$j_1,\dots,j_s\in \{0,1,2\}$ such that $j_1+\cdots+j_s=j$.  Then, we
find
\[
 \prod_{k=1}^s \gX_{j_k}^{(-i_k)}
 \sim
 \xi^j
 \prod_{k=1}^s \theta^{f(-{i_k}+1) - 1}\,
   \big( \gX_0^{(0)} \big)^{f(-i_k)}
   \big( \gX_0^{(-1)} \big)^{f(-i_k-1)}.
\]
Since \eqref{conditions:decomp_alpha} and
\eqref{conditions:thm:comb_deg} are equivalent, the product on the
right is simply $\theta^{m+n-s} \xi^j \big( \gX_0^{(0)} \big)^m
\big( \gX_0^{(-1)} \big)^n$.  The conclusion then follows by
observing that, according to Lemma \ref{lemma:bi-degree_gX}, the
product on the left is the value at $\big(\gX^{(0)},\,
\gX^{(-1)}\big)$ of some bi-homogeneous polynomial of $\QX$ with
bi-degree $\sum_{k=1}^s \big( f(i_k-2),f(i_k-1) \big)$, and thus
with total degree $\sum_{k=1}^s f(i_k) \le d$.
\end{proof}

In fact, we claim that the elements $\gM_{\alpha,j}$ constructed in
the preceding lemma form a basis of $\QgX_{\le d}$.  To prove this,
we will first show that they are linearly independent over $\bQ$,
and count them using Theorem \ref{thm:comb_deg}. This will provide a
lower bound for the dimension of $\QgX_{\le d}$. Next, using the
fact that the ideal $I$ is contained in the kernel of the evaluation
map $\pi$, we will find that the same number is also an upper bound
for this dimension. This will prove our claim and will bring other
consequences as well.  We now proceed to the first step of this
programme.


\subsection{Growth estimates}
 \label{subsec:2.5}
Let $\gS_+$ denote the subgroup of $\gS^*$ whose elements are
represented by sequences with positive terms. Given $\gA,\, \gB \in
\gS_+$, we write $\gA \ll \gB$ or $\gB \gg \gA$ if there exists a
constant $c>0$ such the corresponding sequences $(a_k)_{k\ge 1}$ and
$(b_k)_{k\ge 1}$ satisfy $a_k \le c b_k$ for each sufficiently large
index $k$.  We write $\gA\asymp \gB$ if we both have $\gA \ll \gB$
and $\gA \gg \gB$.  The latter is an equivalence relation on
$\gS_+$, and the condition E1 in \S\ref{subsec:2.2} can be expressed
in the form
\begin{equation*}
 \big| \gX_0^{(i+1)} \big| \asymp \big| \gX_0^{(i)} \big|^\gamma
\end{equation*}
for each $i\in\bZ$, where the absolute value and exponentiation are
taken ``component-wise''.  In particular, for any $d\in\bN$, $\alpha
= m+n/\gamma \in E_d$ and $j \in \{ 0, 1, \dots, 2s_d(\alpha)\}$,
the element $\gM_{\alpha,j}$ of $\QgX_{\le d}$ provided by Lemma
\ref{lemma:gM} satisfies
\[
 \big|\gM_{\alpha,j}\big|
 \asymp
 \big| \gX_0^{(0)} \big|^m
 \big| \gX_0^{(-1)} \big|^n
 \asymp
 \big| \gX_0^{(0)} \big|^{m+n/\gamma}
 =
 \big| \gX_0^{(0)} \big|^\alpha.
\]
Since each $E_d$ is a finite set of positive real numbers, this
leads to the following conclusion.

\begin{lemma}
 \label{lemma:comb_gM}
Let $d\in\bN$, let $r_{\alpha,j}$ ($\alpha\in E_d$, $0\le j \le
2s_d(\alpha)$) be rational numbers not all zero, let $\alpha' =
m+n/\gamma$ be the largest element of $E_d$ for which at least one
of the numbers $r_{\alpha',j}$ ($0\le j \le 2s_d(\alpha')$) is
non-zero, and put $s'=s_d(\alpha')$. Then, with the notation of
Lemma \ref{lemma:gM}, the linear combination $\gA = \sum_{\alpha\in
E_d} \sum_{j=0}^{2s_d(\alpha)} r_{\alpha,j}\, \gM_{\alpha,j}$
satisfies
\begin{equation*}
 \gA
 \ \sim\
 \bigg( \sum_{j=0}^{2s'} r_{\alpha',j}\, \xi^j \bigg)
  \theta^{m+n-s'}
  \big( \gX_0^{(0)} \big)^m
  \big( \gX_0^{(-1)} \big)^n
 \et
 |\gA| \asymp\
 \big| \gX_0^{(0)} \big|^{\alpha'}.
\end{equation*}
\end{lemma}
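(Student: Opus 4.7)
My plan is to isolate the block of terms indexed by $\alpha=\alpha'$ and show that it dominates all the others. Writing $\gA = \sum_{\alpha\in E_d}\gA_\alpha$ with $\gA_\alpha = \sum_{j=0}^{2s_d(\alpha)} r_{\alpha,j}\,\gM_{\alpha,j}$, I first treat each block separately: for each $\alpha = m+n/\gamma \in E_d$ with at least one nonzero coefficient $r_{\alpha,j}$, Lemma \ref{lemma:gM} provides
\[
\gM_{\alpha,j} \sim \theta^{m+n-s_d(\alpha)}\,\xi^j\,\bigl(\gX_0^{(0)}\bigr)^m\bigl(\gX_0^{(-1)}\bigr)^n \quad (0\le j\le 2s_d(\alpha)),
\]
and since all these asymptotic targets share the common $\gS_+$-factor $\theta^{m+n-s_d(\alpha)}\bigl(\gX_0^{(0)}\bigr)^m\bigl(\gX_0^{(-1)}\bigr)^n$ and differ between each other only through the scalar $\xi^j$, a direct computation with representing sequences gives
\[
\gA_\alpha \sim \Bigl(\sum_{j=0}^{2s_d(\alpha)} r_{\alpha,j}\,\xi^j\Bigr)\,\theta^{m+n-s_d(\alpha)}\,\bigl(\gX_0^{(0)}\bigr)^m\bigl(\gX_0^{(-1)}\bigr)^n,
\]
provided the scalar coefficient on the right is nonzero.

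The first key point is this nonvanishing. The numbers $r_{\alpha,j}$ lie in $\bQ$ and are not all zero, $\xi$ is transcendental over $\bQ$ (noted in the introduction via Schmidt's subspace theorem), and $\theta\ne 0$ (noted right after its definition), so the leading scalar above cannot collapse. Combined with the estimate $|\gA_\alpha|\asymp|\gX_0^{(0)}|^{\alpha}$ -- which is precisely the calculation recorded in the paragraph preceding the lemma -- I conclude that every nonzero block $\gA_\alpha$ has exact magnitude $|\gX_0^{(0)}|^{\alpha}$.

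To finish, I rank those magnitudes. Condition E1 forces $|\gX_0^{(0)}|\to\infty$, so for each $\alpha<\alpha'$ in $E_d$ the ratio $|\gX_0^{(0)}|^{\alpha-\alpha'}$ tends to zero; since $E_d$ is finite this gives
\[
\gA - \gA_{\alpha'} = \sum_{\alpha\in E_d,\ \alpha<\alpha'}\gA_\alpha = o\bigl(|\gX_0^{(0)}|^{\alpha'}\bigr),
\]
whence $\gA\sim\gA_{\alpha'}$ and $|\gA|\asymp|\gX_0^{(0)}|^{\alpha'}$, which are the two assertions of the lemma. The only subtle step is the nonvanishing of the leading scalar $\sum_j r_{\alpha',j}\,\xi^j$: once that is secured by transcendence of $\xi$, the remainder is a routine dominance argument in which the ordering of the exponents $\alpha\in E_d$ does all the work.
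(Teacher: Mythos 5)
Your proof is correct and matches the approach the paper intends: the paper establishes $|\gM_{\alpha,j}|\asymp|\gX_0^{(0)}|^\alpha$ in the paragraph preceding the lemma and then deduces the lemma from the finiteness of $E_d$ without further comment, and your argument fills in exactly that dominance step (block decomposition by $\alpha$, nonvanishing of $\sum_j r_{\alpha',j}\xi^j$ via transcendence of $\xi$, domination by the block with the largest exponent $\alpha'$).
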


We are now ready to complete the first step of the programme
outlined at the end of the subsection \ref{subsec:2.4}.

\begin{lemma}
 \label{lemma:indep_gM}
Let $d\in\bN$. The elements $\gM_{\alpha,j}$ ($\alpha\in E_d$,
$j=0,\dots,2s_d(\alpha)$) constructed in Lemma \ref{lemma:gM} form a
$\bQ$-linearly independent subset of $\QgX_{\le d}$ with cardinality
$(4d^3+6d^2+8d+3)/3$.
\end{lemma}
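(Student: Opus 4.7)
The plan is to handle the two assertions separately: $\bQ$-linear independence via the asymptotic estimate of Lemma \ref{lemma:comb_gM} together with the transcendence of $\xi$, and the cardinality count via the combinatorial formula of Theorem \ref{thm:comb_deg}.

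For linear independence, I would argue by contradiction. Suppose some $\bQ$-linear combination
\[
 \gA = \sum_{\alpha \in E_d} \sum_{j=0}^{2s_d(\alpha)} r_{\alpha,j}\, \gM_{\alpha,j}
\]
vanishes in $\gS$ while the rational coefficients $r_{\alpha,j}$ are not all zero. Let $\alpha' = m + n/\gamma$ be the largest element of $E_d$ for which at least one $r_{\alpha',j}$ (with $0 \le j \le 2s'$, $s' = s_d(\alpha')$) is non-zero. Lemma \ref{lemma:comb_gM} then gives
\[
 \gA \sim \bigg(\sum_{j=0}^{2s'} r_{\alpha',j}\,\xi^j\bigg)\theta^{m+n-s'}\,
 \big(\gX_0^{(0)}\big)^m \big(\gX_0^{(-1)}\big)^n
 \et
 |\gA| \asymp \big|\gX_0^{(0)}\big|^{\alpha'}.
\]
Since $\xi$ is transcendental over $\bQ$ (Schmidt's subspace theorem, recalled in Section \ref{sec:intro}) and not all $r_{\alpha',j}$ vanish, the polynomial factor $\sum_j r_{\alpha',j}\xi^j$ is non-zero; furthermore $\theta \ne 0$ by the remark following its definition. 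Condition E1 ensures $|\gX_0^{(0)}|$ tends to infinity, so $|\gA|$ is unbounded. This is incompatible with $\gA = 0$ in $\gS$, which would force the representative sequence to be eventually zero. Contradiction.

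For the cardinality, the number of pairs $(\alpha,j)$ with $\alpha \in E_d$ and $0 \le j \le 2s_d(\alpha)$ is
\[
 \sum_{\alpha \in E_d} \big(2s_d(\alpha)+1\big)
 = \sum_{s=0}^{\infty} \chi_d(s)\,(2s+1).
\]
Plugging in the values of $\chi_d(s)$ furnished by Theorem \ref{thm:comb_deg} and using the standard identity $\sum_{s=0}^{d-1}(2s+1)^2 = d(2d-1)(2d+1)/3$, this becomes
\[
 \sum_{s=0}^{d-1}(2s+1)^2 + (d+1)(2d+1)
 = \frac{d(2d-1)(2d+1)}{3} + (d+1)(2d+1),
\]
which simplifies to $(4d^3+6d^2+8d+3)/3$.

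The main obstacle is the independence step, and within it the non-vanishing of the factor $\sum_j r_{\alpha',j}\xi^j$; this is the only place transcendence of $\xi$ enters, and it is critical that the choice of $\alpha'$ as the \emph{largest} index with some non-zero coefficient is what isolates a single dominant term in the asymptotic, so that Lemma \ref{lemma:comb_gM} (rather than a more delicate cancellation argument among several $\alpha$) suffices. The counting step is then purely mechanical once Theorem \ref{thm:comb_deg} is in hand.
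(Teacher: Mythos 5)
Your proof is correct and follows the same route as the paper: Lemma \ref{lemma:comb_gM} gives the independence, Theorem \ref{thm:comb_deg} gives the count, and your algebra (via $\sum_{s=0}^{d-1}(2s+1)^2 = d(2d-1)(2d+1)/3$) matches the paper's displayed sum $(d+1)(2d+1) + \sum_{s=0}^{d-1}(2s+1)^2$.

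One small inaccuracy worth noting: you conclude by saying $|\gA|$ is unbounded because $|\gX_0^{(0)}|\to\infty$, but $|\gA| \asymp |\gX_0^{(0)}|^{\alpha'}$ and $\alpha'\in E_d$ can equal $0$ (namely when the only non-zero coefficient is $r_{0,0}$), in which case $|\gA|\asymp 1$ is bounded. The fix is immediate and makes the argument cleaner: the relation $\gA\sim\cdots$ in Lemma \ref{lemma:comb_gM} already says $\gA$ is a \emph{unit} of $\gS$ (the equivalence $\sim$ is defined only on $\gS^*$), hence $\gA$ is represented by an eventually non-vanishing sequence and so $\gA\neq 0$, with no need to invoke unboundedness at all. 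Your identification of the transcendence of $\xi$ and the non-vanishing of $\theta$ as the key inputs making the leading coefficient non-zero, and of the maximality of $\alpha'$ as what isolates the dominant term, is exactly what the paper's terse citation of Lemma \ref{lemma:comb_gM} leaves implicit.
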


\begin{proof}
Lemma \ref{lemma:comb_gM} shows that the elements $\gM_{\alpha,j}$
are linearly independent over $\bQ$.  By Theorem \ref{thm:comb_deg},
their number is
\[
 \sum_{\alpha\in E_d} (2s_d(\alpha)+1)
 = \sum_{s=0}^d \chi_d(s) (2s+1)
 = (d+1)(2d+1) + \sum_{s=0}^{d-1} (2s+1)^2.
\]
\end{proof}


\subsection{Computation of an Hilbert function}
 \label{subsec:2.6}
We introduce a new variable $U$, make the ring $\bQ[\uX,\uX^*,U]$
into a graded ring for the total degree, and denote by $I_1$ the
homogeneous ideal of this ring generated by
\begin{equation}
 \label{generators:I1}
 \det(\uX)-U^2,\quad
 \det(\uX^*)-U^2,\quad
 \Phi(\uX,\uX^*).
\end{equation}
Then, for each $d\in \bN$, Lemma \ref{lemma:I} ensures that we have
a surjective $\bQ$-linear map
\begin{equation}
 \label{iso:I1}
 \begin{aligned}
   \big( \bQ[\uX,\uX^*,U]/I_1 \big)_d
      &\longrightarrow \QgX_{\le d}\\
    P(\uX,\uX^*,U) + I_1
      &\longmapsto P\big(\gX^{(0)},\, \gX^{(-1)},\, 1\big).
 \end{aligned}
\end{equation}
In particular, this gives
\begin{equation}
 \label{upper_bound_dim_QgX}
 \dim_\bQ \QgX_{\le d}
 \le H(I_1;d)
 := \dim_\bQ \big( \bQ[\uX,\uX^*,U]/I_1 \big)_d.
\end{equation}
Thus, in order to complete the programme outlined at the end of the
subsection \ref{subsec:2.4}, it remains to compute the Hilbert
function $H(I_1;d)$ of $I_1$.  We achieve this by showing first that
the generators \eqref{generators:I1} of $I_1$ form a regular
sequence in $\bQ[\uX,\uX^*,U]$.

Recall that a \emph{regular sequence} in a ring $R$ is a finite
sequence of elements $a_1,\dots,a_n$ of $R$ such that, for
$i=1,\dots,n$, the multiplication by $a_i$ in $R/(a_1, \dots,
a_{i-1})$ is injective (with the convention that $(a_1, \dots,
a_{i-1}) = (0)$ for $i=0$). If $R$ is a polynomial ring in $m$
variables over a field, then, for any integer $n$ with $1\le n\le
m$, a sequence of $n$ homogeneous polynomials $a_1,\dots,a_n$ of $R$
is regular if and only if the ideal $(a_1,\dots,a_n)$ that it
generates has rank (or codimension) equal to $n$.  In that case, any
permutation of $a_1,\dots,a_n$ is a regular sequence.

\begin{lemma}
 \label{lemma:reg_seq_det}
The polynomials $\det(\uX)$, $\det(\uX^*)$ and $\Phi(\uX,\uX^*)$
form a regular sequence in $\QX$.
\end{lemma}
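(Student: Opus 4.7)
The plan is to prove that the ideal $J = (\det(\uX), \det(\uX^*), \Phi)$ of $\QX$ has height $3$, which, for a sequence of homogeneous polynomials in a polynomial ring over a field, is equivalent to forming a regular sequence (in any order). Since all three generators are homogeneous of degree $2$ and the ambient ring has Krull dimension $6$, Krull's height theorem automatically gives $\dim V(J) \ge 3$; what I must prove is the reverse inequality $\dim V(J) \le 3$, working over the algebraic closure $\overline{\bQ}$.

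To this end I will parametrize the locus $V(\det(\uX), \det(\uX^*))$ by the finite morphism
\[
 \phi\colon \overline{\bQ}^2 \times \overline{\bQ}^2 \longrightarrow \overline{\bQ}^6,
 \qquad
 (v, w) \mapsto (v v^T,\, w w^T),
\]
where the target is $\overline{\bQ}^3 \times \overline{\bQ}^3$ via the identification of a symmetric $2\times 2$ matrix with its triple of entries. Since every symmetric $2 \times 2$ matrix over $\overline{\bQ}$ with vanishing determinant has rank at most $1$ and can therefore be written as $v v^T$, the map $\phi$ is surjective onto $V(\det(\uX), \det(\uX^*))$; its fibres, each of cardinality at most four, are finite. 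Consequently $\dim V(J) = \dim \phi^{-1}(V(J))$.

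The heart of the argument is the explicit factorization
\[
 \Phi(v v^T,\, w w^T)
 = (v_2 w_1 - v_1 w_2)\,\bigl(v^T M w\bigr),
\]
obtained by substituting the entries of $v v^T$ and $w w^T$ into the four $2\times 2$ determinants that enter the definition \eqref{def:gen_I} of $\Phi$: each of them picks up the common factor $v_2 w_1 - v_1 w_2$, and the remaining bilinear expression reassembles as $v^T M w$. Since $M \neq 0$ (indeed $\det M = 1$), both factors are nonzero polynomials, and their product defines a proper hypersurface of $\overline{\bQ}^4$ of dimension $3$. This gives $\dim \phi^{-1}(V(J)) \le 3$, hence $\dim V(J) \le 3$, as required. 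The only step calling for real computation is the factorization displayed above; everything else is a formal dimension count, so I do not anticipate any serious obstacle.
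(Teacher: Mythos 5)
Your proof is correct, and it hinges on the very same substitution $\uX \mapsto vv^T$, $\uX^* \mapsto ww^T$ and the very same factorization $\Phi(vv^T,ww^T)=(v_2w_1-v_1w_2)\,(v^T M w)$ that the paper uses. The only difference is the packaging: the paper notes that $(\det\uX,\det\uX^*)$ is precisely the kernel of the corresponding substitution endomorphism of $\QX$, so the quotient is a domain and one need only check that the image of $\Phi$ is nonzero, whereas you take the geometric route of bounding $\dim V(J)$ over $\overline{\bQ}$ via the finite surjective parametrization $\phi$, which invokes a bit more machinery (Krull's height theorem, Cohen--Macaulayness of polynomial rings, preservation of dimension under finite surjections) but spares you from having to identify the full kernel of the substitution.
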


\begin{proof}
Put $R:=\QX$.  Since $\det(\uX)$ and $\det(\uX^*)$ are relatively
prime, they form a regular sequence in $R$.  Moreover, the ideal
that they generate is the kernel of the endomorphism of $R$ which
maps $X_i$ to $X_0^{2-i}X_1^i$ and $X^*_i$ to $(X^*_0)^{2-i}
(X^*_1)^i$ for $i=0,1,2$.  The conclusion follows by observing that
the image of $\Phi(\uX,\uX^*)$ under this map is
\[
 (X_1X^*_0-X_0X_1^*)
 (a_{1,1}X_0X^*_0+a_{1,2}X_0X^*_1+a_{2,1}X_1X_0^*+a_{2,2}X_1X_1^*),
\]
which is a non-zero polynomial.
\end{proof}

We can now turn to the ideal $I_1$.

\begin{lemma}
 \label{lemma:reg_seq_I1}
The generators \eqref{generators:I1} of $I_1$ form a regular
sequence in $\bQ[\uX,\uX^*,U]$. For each $d\in\bN$, we have
\[
 H(I_1;d)
 =
 (4d^3+6d^2+8d+3)/3.
\]
\end{lemma}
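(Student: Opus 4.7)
My plan is to derive the regular sequence property from a codimension computation, and then obtain the Hilbert function from the standard formula for a complete intersection. By the criterion recalled just before Lemma \ref{lemma:reg_seq_det}, it suffices to show that the ideal $I_1$ has codimension (height) equal to $3$ in the $7$-dimensional graded polynomial ring $R := \bQ[\uX,\uX^*,U]$.

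For the codimension, I would sandwich $I_1$ by passing to the larger ideal
\[
 (I_1, U) = \big( \det(\uX)-U^2,\, \det(\uX^*)-U^2,\, \Phi,\, U \big)
          = \big( \det(\uX),\, \det(\uX^*),\, \Phi,\, U \big),
\]
the equality holding because $U^2 \in (U)$. Since Lemma \ref{lemma:reg_seq_det} provides the regular sequence $\det(\uX),\, \det(\uX^*),\, \Phi$ in $\QX$, and $U$ is a fresh indeterminate over $\QX$, the sequence $\det(\uX),\, \det(\uX^*),\, \Phi,\, U$ is regular in $R=\QX[U]$, giving $\operatorname{ht}(I_1, U) = 4$. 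On the one hand, Krull's principal ideal theorem yields $\operatorname{ht}(I_1)\le 3$ since $I_1$ has three generators. On the other hand, $U$ is a homogeneous element of positive degree in the graded ring $R/I_1$, hence a non-unit, so dividing by it lowers the Krull dimension by at most $1$; this translates into $\operatorname{ht}(I_1)\ge \operatorname{ht}(I_1,U)-1 = 3$. These two inequalities force $\operatorname{ht}(I_1) = 3$, and hence the regular sequence property.

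Once regularity is known, I would invoke the standard Hilbert series formula
\[
 \sum_{d\ge 0}H(I_1;d)\,t^d \;=\; \frac{\prod_{i=1}^{n}(1-t^{d_i})}{(1-t)^{m}},
\]
valid for a regular sequence of homogeneous polynomials of degrees $d_1,\dots,d_n$ in a polynomial ring in $m$ variables over a field, and proved by iterating the short exact sequence induced by multiplication by each $a_i$. Here $m=7$ and $d_1=d_2=d_3=2$, so the right-hand side simplifies to $(1-t^2)^3/(1-t)^7 = (1+t)^3/(1-t)^4$. Extracting the coefficient of $t^d$ from $(1+3t+3t^2+t^3)\sum_{k\ge 0}\binom{k+3}{3}t^k$ yields
\[
 H(I_1;d) = \binom{d+3}{3}+3\binom{d+2}{3}+3\binom{d+1}{3}+\binom{d}{3},
\]
and a direct expansion of the binomial coefficients (with the convention $\binom{n}{3}=0$ for $n<3$, which is consistent with the cases $d\in\{0,1,2\}$) collapses this to $(4d^3+6d^2+8d+3)/3$. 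The only non-routine ingredient is the codimension sandwich; the power series manipulation is elementary, and the small-$d$ cases can be checked by hand against the formula as a sanity check.
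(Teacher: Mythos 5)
Your argument is correct, and it rests on the same two pillars as the paper's proof: Lemma \ref{lemma:reg_seq_det} accessed through the specialization $U\mapsto 0$ (your identity $(I_1,U)=(\det(\uX),\det(\uX^*),\Phi,U)$ is exactly the paper's observation that the generators of $I_1$ reduce mod $U$ to the regular sequence of that lemma), the criterion ``homogeneous sequence regular $\Leftrightarrow$ rank of the ideal equals the number of generators'', and the standard complete-intersection Hilbert series, which you compute identically. Where you differ is the bookkeeping for the regularity step: the paper concludes directly that $U,\det(\uX)-U^2,\det(\uX^*)-U^2,\Phi$ is a regular sequence in $\bQ[\uX,\uX^*,U]$ and then permutes and truncates this homogeneous regular sequence, whereas you run a rank sandwich, getting $\mathrm{ht}(I_1,U)=4$ from the four-term regular sequence, $\mathrm{ht}(I_1)\le 3$ from Krull's principal ideal theorem, and $\mathrm{ht}(I_1)\ge 3$ from a dimension-drop argument. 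Both routes are legitimate; the paper's is shorter because the permutation trick bypasses any height computation, while yours makes the codimension statement explicit (which is in fact what underlies the permutation criterion anyway). One small point of precision in your lower bound: ``$U$ is a non-unit in $R/I_1$'' is not by itself enough to guarantee that killing it drops the Krull dimension by at most one (this can fail for non-equidimensional affine rings when the element is invertible on the top-dimensional components); what saves you is exactly the feature you mention in passing, namely that $U$ is homogeneous of positive degree in the graded affine ring $R/I_1$, so it lies in the irrelevant maximal ideal and the local form of Krull's theorem applies there (equivalently, one can argue with a minimal prime $P\supseteq I_1$ of height $\mathrm{ht}(I_1)$ and a minimal prime of $(P,U)$, using that the polynomial ring is catenary). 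With that justification spelled out, the proof is complete.
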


\begin{proof}
Since the natural isomorphism $\bQ[\uX,\uX^*,U]/(U) \to \QX$ induced
by the specialization $U\mapsto 0$ maps the sequence of polynomials
\eqref{generators:I1} to the regular sequence of $R$ studied in
Lemma \ref{lemma:reg_seq_det}, we deduce that $U$, $\det(\uX)-U^2$,
$\det(\uX^*)-U^2$ and $\Phi(\uX,\uX^*)$ form a regular sequence in
$\bQ[\uX,\uX^*,U]$.  Since these are homogeneous polynomials, it
follows that the last three of them, which generate $I_1$, form a
regular sequence. Since the latter are homogeneous of degree $2$ and
since $\bQ[\uX,\uX^*,U]$ is a polynomial ring in $7$ variables, the
Hilbert series of the ideal $I_1$ is given by
\[
 \sum_{d=0}^\infty H(I_1;d) T^d
 = \frac{(1-T^2)^3}{(1-T)^7}
 = \frac{(1+T)^3}{(1-T)^4}
 = (1+T)^3 \sum_{d=0}^\infty \binom{d+3}{3} T^d,
\]
and a short computation completes the proof.
\end{proof}


\subsection{Conclusion}
 \label{subsec:2.7}
Combining the above result with \eqref{upper_bound_dim_QgX} and
Lemma \ref{lemma:indep_gM}, we obtain finally:

\begin{theorem}
 \label{thm:basis_gM}
Let $d\in\bN$. Then, the map \eqref{iso:I1} is an isomorphism of
vector spaces over $\bQ$, and the elements $\gM_{\alpha,j}$
($\alpha\in E_d$, $j=0,\dots,2s_d(\alpha)$) constructed in Lemma
\ref{lemma:gM} form a basis of $\QgX_{\le d}$. The dimension of the
latter vector space is $(4d^3+6d^2+8d+3)/3$.
\end{theorem}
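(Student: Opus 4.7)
The plan is to synthesize the preceding results via a sandwich argument that matches a lower bound for $\dim_\bQ \QgX_{\le d}$, coming from the analytic/combinatorial side, with an upper bound coming from the algebraic/Hilbert-function side.

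First I would extract the lower bound directly from Lemma \ref{lemma:indep_gM}: the family $\{\gM_{\alpha,j} : \alpha\in E_d,\ 0\le j\le 2s_d(\alpha)\}$ is $\bQ$-linearly independent in $\QgX_{\le d}$ and has cardinality $N := (4d^3+6d^2+8d+3)/3$, so $\dim_\bQ \QgX_{\le d} \ge N$. For the matching upper bound, I would combine the surjectivity of \eqref{iso:I1}, i.e.\ the inequality \eqref{upper_bound_dim_QgX}, with the Hilbert function computation of Lemma \ref{lemma:reg_seq_I1} to obtain
\[
 \dim_\bQ \QgX_{\le d} \le H(I_1;d) = N.
\]

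These two bounds pinch the dimension exactly to $N$, and all three assertions of the theorem follow at once. The dimension formula for $\QgX_{\le d}$ is the common value $N$. The $\gM_{\alpha,j}$ are $\bQ$-linearly independent and their number equals $\dim_\bQ \QgX_{\le d}$, so they automatically span $\QgX_{\le d}$ and therefore form a basis. Finally, the source $\bigl(\bQ[\uX,\uX^*,U]/I_1\bigr)_d$ of the surjection \eqref{iso:I1} has dimension $H(I_1;d)=N$, equal to the dimension of its target, and any surjective $\bQ$-linear map between finite-dimensional vector spaces of the same dimension is bijective; hence \eqref{iso:I1} is an isomorphism.

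There is no substantial obstacle at this concluding step: all of the real work has already been done in the earlier lemmas. The non-trivial ingredients are the combinatorial Theorem \ref{thm:comb_deg} (which produces the count $N$ of pairs $(\alpha,j)$), Lemma \ref{lemma:comb_gM} (whose distinct asymptotic behaviors force the independence of the $\gM_{\alpha,j}$), and the regular-sequence/Hilbert-series computation in Lemma \ref{lemma:reg_seq_I1} (which pins the upper bound down to the same $N$). The present theorem is simply the alignment of these two counts, which also retroactively confirms that the ideal $I$ is in fact the full kernel of $\pi$ in each bounded total-degree slice.
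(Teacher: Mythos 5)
Your proposal is correct and takes essentially the same approach as the paper: the paper's proof is precisely this sandwich argument, combining the lower bound from Lemma \ref{lemma:indep_gM} with the upper bound \eqref{upper_bound_dim_QgX} together with the Hilbert-function value from Lemma \ref{lemma:reg_seq_I1}. You have simply spelled out the concluding linear-algebra step (independent set of full cardinality is a basis; surjection between equidimensional spaces is an isomorphism) that the paper leaves implicit.
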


Applying first Lemma \ref{lemma:comb_gM} and then the growth
estimates of Subsection \ref{subsec:2.5}, we deduce from this the
following two consequences.

\begin{corollary}
 \label{cor1:thm:basis}
Let $d\in\bN$.  For non-zero element $\gA$ of $\QgX_{\le d}$, there
exists a point $\alpha =  m+n/\gamma \in E_d$ and a polynomial
$A\in\bQ[T]$ of degree at most $2s_d(\alpha)$ such that
\[
 \gA
 \sim
 \theta^{m+n-s_d(\alpha)} A(\xi)
 \big( \gX_0^{(0)} \big)^m\,
 \big( \gX_0^{(-1)} \big)^n.
\]
\end{corollary}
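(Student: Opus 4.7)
The plan is to combine Theorem~\ref{thm:basis_gM} with Lemma~\ref{lemma:comb_gM} in an essentially mechanical way. First, I would invoke Theorem~\ref{thm:basis_gM} to expand the given non-zero element $\gA \in \QgX_{\le d}$ uniquely in the basis just produced, writing
\[
 \gA = \sum_{\alpha\in E_d}\sum_{j=0}^{2s_d(\alpha)} r_{\alpha,j}\,\gM_{\alpha,j}
\]
with $r_{\alpha,j}\in\bQ$ not all zero. Since $E_d$ is a finite subset of the positive reals (as noted just before Lemma~\ref{lemma:comb_gM}), the set of $\alpha$ for which some $r_{\alpha,j}$ is non-zero admits a largest element $\alpha'=m+n/\gamma$, and I would set $s'=s_d(\alpha')$.

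Next, applying Lemma~\ref{lemma:comb_gM} directly to this linear combination yields
\[
 \gA \sim \bigg(\sum_{j=0}^{2s'} r_{\alpha',j}\,\xi^j\bigg) \theta^{m+n-s'} \big(\gX_0^{(0)}\big)^m \big(\gX_0^{(-1)}\big)^n.
\]
It then suffices to define the polynomial $A(T)=\sum_{j=0}^{2s'} r_{\alpha',j}\,T^j \in \bQ[T]$, which by construction has degree at most $2s'=2s_d(\alpha')$ and is non-zero by the choice of $\alpha'$. Substituting gives the asymptotic expression claimed in the corollary.

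A small point I would verify carefully is that both sides of the asymptotic $\sim$ relation actually represent units of $\gS$, since $\sim$ is defined on $\gS^*$. The left side $\gA$ is non-zero in $\gS$ and the estimate $|\gA|\asymp |\gX_0^{(0)}|^{\alpha'}$ from Lemma~\ref{lemma:comb_gM} shows that no subsequence of its representatives accumulates at $0$, hence $\gA\in\gS^*$. On the right, $\theta\neq 0$ was established in \S\ref{subsec:2.4}, the coordinates $\gX_0^{(0)},\gX_0^{(-1)}$ are units, and $A(\xi)\neq 0$ because $\xi$ is transcendental and $A\neq 0$. There is really no obstacle here; the entire content of the corollary is encapsulated in the two lemmas being combined, and the argument amounts to extracting the leading term of the basis expansion.
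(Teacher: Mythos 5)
Your proposal is correct and follows exactly the route the paper intends: expand $\gA$ in the basis $\{\gM_{\alpha,j}\}$ provided by Theorem~\ref{thm:basis_gM} and then read off the leading term via Lemma~\ref{lemma:comb_gM}, setting $A(T)=\sum_j r_{\alpha',j}T^j$. The paper states this in a single sentence without spelling out the details, so your more careful write-up (including the check that both sides are units, using the transcendence of $\xi$) is a faithful unpacking of the same argument.
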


\begin{corollary}
 \label{cor2:thm:basis}
For non-zero element $\gA$ of $\QgX$, there exists a point $\alpha
\in \bZ[\gamma]$ such that
\[
 \gA
 \asymp
 \big| \gX_0^{(0)} \big|^\alpha.
\]
The map $\gA\mapsto \alpha$ is a rank two valuation on the ring
$\QgX$.
\end{corollary}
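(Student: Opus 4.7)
The approach is to read off $\alpha$ directly from Corollary \ref{cor1:thm:basis} and then verify the valuation axioms by means of the $\asymp$ calculus of Subsection \ref{subsec:2.5}. For existence, given a non-zero $\gA \in \QgX$, I will choose $d \in \bN$ with $\gA \in \QgX_{\le d}$ and apply Corollary \ref{cor1:thm:basis} to obtain
\[
 \gA \sim \theta^{m+n-s_d(\alpha)} A(\xi)\,(\gX_0^{(0)})^m (\gX_0^{(-1)})^n
\]
for some $\alpha = m + n/\gamma \in E_d$ and some $A \in \bQ[T]$. Since the relation $\sim$ is defined only between units of $\gS$, the right-hand side must be a unit, which combined with the transcendence of $\xi$ over $\bQ$ forces $A\neq 0$ and hence $A(\xi)\neq 0$. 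Taking absolute values and applying the estimate $|\gX_0^{(-1)}| \asymp |\gX_0^{(0)}|^{1/\gamma}$ (the case $i=-1$ of the relation derived from E1 in Subsection \ref{subsec:2.5}) then yields $|\gA| \asymp |\gX_0^{(0)}|^\alpha$ with $\alpha \in \bZ[\gamma]$.

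To prove that $\alpha$ is uniquely determined by $\gA$, I will use that $|\gX_0^{(0)}|$ grows (doubly exponentially by E1) to $+\infty$: a relation $|\gX_0^{(0)}|^{\alpha_1-\alpha_2} \asymp 1$ with $\alpha_1, \alpha_2 \in \bZ[\gamma] \subset \bR$ is impossible unless the real number $\alpha_1 - \alpha_2$ vanishes, since otherwise the corresponding power sequence would diverge or tend to $0$. This produces a well-defined map $v\colon \QgX \setminus \{0\} \to \bZ[\gamma]$ by $v(\gA) := -\alpha(\gA)$.

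The verification of the valuation axioms is then routine. Multiplicativity $v(\gA \gB) = v(\gA) + v(\gB)$ follows immediately from $|\gA\gB| = |\gA|\,|\gB|$ together with uniqueness, while the ultrametric inequality is a consequence of $|\gA + \gB| \le |\gA| + |\gB| \ll |\gX_0^{(0)}|^{\max(\alpha(\gA),\alpha(\gB))}$, which forces $\alpha(\gA+\gB) \le \max(\alpha(\gA),\alpha(\gB))$ and hence $v(\gA+\gB) \ge \min(v(\gA), v(\gB))$ whenever $\gA + \gB \neq 0$. Finally, the value group contains both $\alpha(\gX_0^{(0)}) = 1$ and $\alpha(\gX_0^{(-1)}) = 1/\gamma$, so it coincides with the full $\bZ[\gamma] = \bZ \oplus \bZ\cdot(1/\gamma)$, which is free abelian of rank two, justifying the terminology.

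I do not foresee a serious obstacle: Corollary \ref{cor1:thm:basis} carries all the analytic content, and what remains is formal manipulation of $\asymp$ and $\sim$. The one step worth flagging is the uniqueness argument, which is what promotes the existential clause of Corollary \ref{cor1:thm:basis} to a single-valued function of $\gA$; it depends essentially on the unbounded growth of $|\gX_0^{(0)}|$ provided by E1, together with the fact that $\bZ[\gamma]$ embeds in $\bR$ so that distinct exponents yield distinct growth rates.
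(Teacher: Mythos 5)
Your proof is correct and follows the same chain the paper sketches in the one sentence after Theorem \ref{thm:basis_gM}: express $\gA$ in the basis from Theorem \ref{thm:basis_gM}, apply Lemma \ref{lemma:comb_gM} (here bundled into Corollary \ref{cor1:thm:basis}), use $\big|\gX_0^{(-1)}\big| \asymp \big|\gX_0^{(0)}\big|^{1/\gamma}$ from E1 to merge exponents, and invoke the unbounded growth of $\big|\gX_0^{(0)}\big|$ for uniqueness of $\alpha$, after which the valuation axioms are formal. You correctly observe that uniqueness is the step that turns an existential statement into a well-defined map, and your verification of multiplicativity and the ultrametric inequality is sound; as a side remark, the multiplicativity check tacitly shows $\gA\gB \neq 0$ whenever $\gA,\gB\neq 0$, i.e.\ that $\QgX$ is an integral domain, which the paper otherwise only obtains in Section \ref{sec:complement}.

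The one point deserving a caveat is the phrase \emph{rank two}. Your argument establishes that the value group is $\bZ[\gamma] = \bZ\oplus\bZ\cdot\gamma^{-1}$, free of $\bZ$-rank $2$, which is the rational rank of the valuation, and this is evidently what the authors mean. But the ordering on the value group is the one inherited from $\bR$ (one cannot use a lexicographic order on $\bZ^2$ here: the ultrametric inequality then fails, since for $\alpha_1 < \alpha_2$ in $\bR$ the sum $\gA_1 + \gA_2$ is governed by the larger real exponent regardless of the lexicographic comparison of $(m_1,n_1)$ with $(m_2,n_2)$). That ordering is archimedean, so the height (real rank, Krull dimension of the valuation ring) is $1$. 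It would be worth stating explicitly in your write-up that \emph{rank} here is being read as rational rank, to forestall confusion with the convex-subgroup count.
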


Finally, we note that, for each $d\in\bN$, the linear map
\eqref{iso:I1} factors through the map from $(\QX_{\le d}+I)/I$ to
$\QgX_{\le d}$ induced by $\pi$.  Since the former is an
isomorphism, the latter is also an isomorphism, and so:

\begin{corollary}
 \label{cor:ker_pi}
The ideal $I$ defined in Lemma \ref{lemma:I} is the kernel of the
evaluation map $\pi$ from $\QX$ to $\QgX$.
\end{corollary}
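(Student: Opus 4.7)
The plan is to deduce $\ker(\pi)=I$ directly from the last paragraph of \S\ref{subsec:2.7}, which sketches precisely this deduction, and from Theorem \ref{thm:basis_gM}. For each $d\in\bN$, I would first observe that the dehomogenization map $\bQ[\uX,\uX^*,U]_d \to \QX_{\le d}$ specializing $U$ to $1$ is a $\bQ$-linear isomorphism (its inverse being homogenization with $U$) and that it sends the degree-two generators \eqref{generators:I1} of $I_1$ onto the generators \eqref{def:gen_I} of $I$. Consequently it descends to a well-defined surjective linear map
\[
\alpha_d\colon \big(\bQ[\uX,\uX^*,U]/I_1\big)_d \longrightarrow (\QX_{\le d}+I)/I,
\]
the surjectivity being clear since every polynomial of $\QX_{\le d}$ lifts to a degree-$d$ homogeneous polynomial upon multiplying each monomial by a suitable power of $U$.

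Next, I would set $\bar\pi_d\colon (\QX_{\le d}+I)/I \to \QgX_{\le d}$ to be the map induced by $\pi$, which is well-defined thanks to the inclusion $I\subseteq\ker\pi$ supplied by Lemma \ref{lemma:I}. A direct chase of a representative $P(\uX,\uX^*,U)$ shows that the isomorphism \eqref{iso:I1} of Theorem \ref{thm:basis_gM} equals the composition $\bar\pi_d\circ\alpha_d$. Since this composition is an isomorphism and $\alpha_d$ is surjective, both $\alpha_d$ and $\bar\pi_d$ must themselves be isomorphisms; in particular $\bar\pi_d$ is injective for every $d$.

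The corollary follows at once: given any $f\in\ker\pi$, there exists $d\in\bN$ with $f\in\QX_{\le d}$, and then $\bar\pi_d(f+I)=\pi(f)=0$ together with the injectivity of $\bar\pi_d$ forces $f\in I$. Combined with Lemma \ref{lemma:I}, this gives the equality $\ker\pi=I$.

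The only point requiring any real care is verifying the factorization identity \eqref{iso:I1}$=\bar\pi_d\circ\alpha_d$, and it is essentially a bookkeeping check: the upper route evaluates a degree-$d$ representative in $\bQ[\uX,\uX^*,U]$ at $(\gX^{(0)},\gX^{(-1)},1)$, while the lower route first dehomogenizes by $U\mapsto 1$ in $\QX$ and then applies $\pi$, producing the same value $P(\gX^{(0)},\gX^{(-1)},1)$. Since all the substantive combinatorial, analytic and commutative-algebra content has been performed in Lemmas \ref{lemma:indep_gM} and \ref{lemma:reg_seq_I1} and in Theorem \ref{thm:basis_gM}, no further obstacle remains.
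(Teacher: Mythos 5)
Your argument is correct and is exactly the paper's own route: the paper's final remark of \S\ref{subsec:2.7} states precisely that the isomorphism \eqref{iso:I1} factors through the map $(\QX_{\le d}+I)/I \to \QgX_{\le d}$ induced by $\pi$, and concludes from Theorem \ref{thm:basis_gM} that this latter map must also be an isomorphism. You have simply made explicit the two implicit ingredients (the surjectivity of the dehomogenization map $\alpha_d$ and the identity $\eqref{iso:I1}=\bar\pi_d\circ\alpha_d$) that the paper compresses into a single sentence.
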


%
%

\section{Analogous results in bi-degree}
 \label{sec:bideg}

The following result is analogous to Theorem \ref{thm:comb_deg}.

\begin{theorem}
 \label{thm:comb_bideg}
Let $\ud = (d_1,d_2)\in\bN^2$.  For each $s\in\bN$, let $\chi_\ud(s)
= \chi_{d_1,d_2}(s)$ denote the number of points $(m,n)\in\bZ^2$ for
which the conditions
\begin{equation}
 \label{conditions:thm:comb_bideg}
 (m,n) = \sum_{k=1}^s \big( f(-i_k),\, f(-i_k-1) \big),
 \quad
 \sum_{k=1}^s f(i_k-2) \le d_1
 \et
 \sum_{k=1}^s f(i_k-1) \le d_2,
\end{equation}
admit a solution integers $0\le i_1\le\cdots\le i_s$, and $s$ is
maximal with this property.  Then, 
\begin{equation}
 \label{formula:chi_ud}
  \chi_{d_1,d_2}(s) =
  \begin{cases}
    2\min\{ d_1,\, d_2,\, s,\, d_1+d_2-s\} + 1
      &\text{if\, $0\le s\le d_1+d_2$,}\\
    0 &\text{if\, $s>d_1+d_2$.}
  \end{cases}
\end{equation}
\end{theorem}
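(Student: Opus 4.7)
The plan is to follow the strategy indicated for Theorem \ref{thm:comb_deg}, reformulating the bi-degree conditions using $\alpha=m+n/\gamma\in\bZ[\gamma]$ and the formulas \eqref{formulas:gamma^-i}. The first condition in \eqref{conditions:thm:comb_bideg} becomes $\alpha=\gamma^{-i_1}+\cdots+\gamma^{-i_s}$, while the two inequalities $\sum f(i_k-2)\le d_1$ and $\sum f(i_k-1)\le d_2$ are the natural bi-degree refinement of $\sum f(i_k)\le d$ suggested by Lemma \ref{lemma:bi-degree_gX}, since $f(i_k)=f(i_k-2)+f(i_k-1)$. I would introduce $E_{d_1,d_2}\subset\bZ[\gamma]$ as the set of $\alpha$ admitting such a representation and $s_{d_1,d_2}(\alpha)$ as the largest $s$ that works, so the theorem reduces to showing that the number of $\alpha\in E_{d_1,d_2}$ with $s_{d_1,d_2}(\alpha)=s$ equals $\chi_{d_1,d_2}(s)$.

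The central combinatorial tool will be the identity $\gamma^{-(j-1)}=\gamma^{-j}+\gamma^{-(j+1)}$ (valid for $j\ge 1$), which generates all transformations between non-negative-exponent representations of a given $\alpha$. A \emph{split} move $\gamma^{-(j-1)}\to\gamma^{-j}+\gamma^{-(j+1)}$ increases $s$ by one and enlarges the bi-degree contribution by $(f(j)-f(j-3),\,f(j+1)-f(j-2))$, a vector which is non-negative (strictly positive in both coordinates for $j\ge 2$, and equal to $(0,2)$ for $j=1$); \emph{combines} are their inverses. Hence the maximal $s$ in $E_{d_1,d_2}$ is attained at a representation that cannot be split further without breaking one of the two budgets. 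I would then show that this ``maximally split'' representation is unique and has a controlled staircase shape: the multiplicities $c_j=\#\{k:i_k=j\}$ vanish outside a short interval, with at most one ``double'' (some $c_j\ge 2$) at the lower end, while the remaining $c_j$ take values in $\{0,1\}$ with no two consecutive ones -- a normal form of Zeckendorf flavour, tuned to the budget $(d_1,d_2)$.

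With the canonical form in hand, I would parametrize the $\alpha$'s with $s_{d_1,d_2}(\alpha)=s$ by lattice points in a segment of length $2\min\{d_1,d_2,s,d_1+d_2-s\}$. The four quantities inside the minimum correspond to the four possible tight constraints on the staircase: exhausting the $d_1$-budget, exhausting the $d_2$-budget, the length-$s$ constraint itself, and the upper bound $s\le d_1+d_2$ coming from $f(i_k-2)+f(i_k-1)=f(i_k)\ge 1$ for $i_k\ge 0$. The extra ``$+1$'' reflects a single central choice of origin for the staircase.

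The main obstacle I anticipate is establishing the uniqueness of the maximally-split representation inside $E_{d_1,d_2}$ and ruling out collisions where distinct staircase shapes happen to produce the same element of $\bZ[\gamma]$. Unlike in the single-degree setting, the analysis here bifurcates according to which of $d_1,d_2,s,d_1+d_2-s$ attains the minimum; in particular, the ``plateau'' behaviour when $s$ exceeds one but not both of $d_1,d_2$, giving a constant count $2\min(d_1,d_2)+1$ over a range of $s$, will have to be explained cleanly through the geometry of the canonical form, paying special attention to the asymmetric effect of the $j=1$ split, which consumes only $d_2$-budget and not $d_1$-budget.
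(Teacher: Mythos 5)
Your reformulation in $\bZ[\gamma]$ and your identification of the split move $\gamma^{-(j-1)}\to\gamma^{-j}+\gamma^{-(j+1)}$ as the generating operation are both correct and match the paper's setup. However, the canonical form you conjecture for the maximally split representation is not right, and this is where the argument would break down. You propose a Zeckendorf-flavoured staircase in which the multiplicities $c_j$ take values in $\{0,1\}$ with no two consecutive ones, apart from a single double at the lower end. That is actually close to a \emph{minimal} (greedy) representation; the representation of largest size under a bi-degree budget looks quite different. What the paper proves (Proposition \ref{prop:quad}) is that a maximal-size representation can always be taken to be a \emph{quad}: a decomposition $\alpha=a\gamma^{-i}+b\gamma^{-i-1}+c\gamma^{-i-2}$ with exponents occupying only three consecutive values, and with arbitrary large multiplicities $a,b,c$. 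For instance, starting from $\gamma^0$ and splitting repeatedly within a generous budget produces $\gamma^{-1}+\gamma^{-2}$, then $2\gamma^{-2}+\gamma^{-3}$, then $\gamma^{-2}+3\gamma^{-3}+\gamma^{-4}$, etc.; these have two or three consecutive nonzero multiplicities, some of which are $\ge 2$, directly contradicting your proposed normal form. Without the quad reduction you have no handle on uniqueness or on the count.

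Even granting a corrected canonical form, the second half of your plan --- parametrizing $\{\alpha:s_\ud(\alpha)=s\}$ by a segment of lattice points and reading off $2\min\{d_1,d_2,s,d_1+d_2-s\}+1$ --- is stated as a goal rather than an argument, and the plateau behaviour that worries you is precisely what needs a mechanism. The paper avoids a direct parametrization entirely: having reduced to quads, it studies the two families $Q_\alpha$ (all quads representing a fixed $\alpha$, linearly ordered by size via the split map $\theta$, Proposition \ref{prop:Qalpha}) and $Q_\ud$ (all quads of a fixed bi-degree, linearly ordered by size via a companion map $\psi$, Proposition \ref{prop:Qd}), and then proves the count by \emph{induction on $d_2$} with $d_1$ fixed. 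The inductive step compares $E_{d_1,d_2}(s)$ with $E_{d_1,d_2-1}(s)$ and reduces the increment to a count of indices $i\le d_1$ for which $Q_{i,d_2}$ contains an element of size $s$ but none of size $s+2$; this is where the $\min\{\cdot\}$ structure and the plateau emerge automatically, without case analysis over which constraint is tight. You would need to either discover the quad normal form and supply a genuinely new counting argument, or adopt the paper's inductive strategy.
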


Note that this function $\chi_{d_1,d_2}(s)$ possess several
symmetries. For each $(d_1,d_2)\in\bN^2$ and each
$s=0,\dots,d_1+d_2$, it satisfies
\begin{equation*}
 \chi_{d_1,d_2}(s) = \chi_{d_2,d_1}(s)
 \et
 \chi_{d_1,d_2}(s) = \chi_{d_1,d_2}(d_1+d_2-s).
\end{equation*}
As in Section \ref{sec:ring}, we note that, for a point $\alpha =
m+n/\gamma \in \bZ[\gamma]$, the conditions
\eqref{conditions:thm:comb_bideg} are equivalent to
\begin{equation}
 \label{conditions:decomp_alpha_bideg}
 \alpha = \sum_{k=1}^s \gamma^{-i_k},
 \quad
 \sum_{k=1}^s f(i_k-2) \le d_1
 \et
 \sum_{k=1}^s f(i_k-1) \le d_2.
\end{equation}
For each $\ud\in \bN^2$, we denote by $E_\ud$ the set of $\alpha \in
\bZ[\gamma]$ for which these conditions admit a solution in integers
$0\le i_1\le\cdots\le i_s$ for some $s\in\bN$ and, for each
$\alpha\in E_\ud$, we denote by $s_\ud(\alpha)$ the largest such
$s$. Then, Theorem \ref{thm:comb_bideg} tells us that, for given
$\ud\in\bN^2$ and $s\in\bN$, the number of elements $\alpha$ of
$E_\ud$ with $s_\ud(\alpha)=s$ is $\chi_\ud(s)$ given by
\eqref{formula:chi_ud}. This will be proved in Section
\ref{sec:comb}.

For each $\ud = (d_1,d_2) \in\bN^2$, we also denote by $\QX_{\le
\ud}$ the set of polynomials of $\QX$ with degree at most $d_1$ in
$\uX$ and degree at most $d_2$ in $\uX^*$. We also write $\QgX_{\le
\ud}$ for the image of that set under the evaluation map $\pi$
defined by \eqref{def:pi}. We can now state and prove the following
bi-degree analog of Theorem \ref{thm:basis_gM}.

\begin{theorem}
 \label{thm:basis_gM'}
Let $\ud=(d_1,d_2)\in\bN^2$. For each $\alpha = m+n/\gamma \in
E_\ud$ and each $j=0,\dots,2s_\ud(\alpha)$, there exists an element
$\gM'_{\alpha,j}$ of $\QgX_{\le \ud}$ with
\[
 \gM'_{\alpha,j}
 \sim
 \theta^{m+n-s_\ud(\alpha)} \xi^j
  \big( \gX_0^{(0)} \big)^m
  \big( \gX_0^{(-1)} \big)^n.
\]
Any such choice of elements, one for each pair $(\alpha,j)$,
provides a basis of $\QgX_{\le \ud}$.  This vector space has
dimension $(d_1+d_2+1)(2d_1d_2+d_1+d_2+1)$.
\end{theorem}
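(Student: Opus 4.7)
The plan is to follow the three-step strategy of Subsections \ref{subsec:2.4}--\ref{subsec:2.7}: produce the elements $\gM'_{\alpha,j}$, count them via Theorem \ref{thm:comb_bideg} to obtain a lower bound for $\dim_\bQ \QgX_{\le \ud}$, and match this with an upper bound coming from a bi-graded Hilbert function calculation.

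For the construction, I would mimic Lemma \ref{lemma:gM} verbatim. Given $\alpha = m+n/\gamma \in E_\ud$ with $s = s_\ud(\alpha)$, choose integers $0\le i_1\le\cdots\le i_s$ satisfying \eqref{conditions:decomp_alpha_bideg} together with indices $j_1,\dots,j_s\in\{0,1,2\}$ summing to $j$, and set $\gM'_{\alpha,j} = \prod_{k=1}^s \gX_{j_k}^{(-i_k)}$. By Lemma \ref{lemma:bi-degree_gX} and the two bi-degree constraints in \eqref{conditions:decomp_alpha_bideg}, this product lies in $\QgX_{\le \ud}$, and the same computation as in the proof of Lemma \ref{lemma:gM} yields the stated asymptotic. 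Linear independence of any such family is immediate from Lemma \ref{lemma:comb_gM}, whose argument depends only on the leading asymptotic form. Theorem \ref{thm:comb_bideg} then gives the total count $\sum_{s=0}^{d_1+d_2} \chi_\ud(s)(2s+1)$, which a routine manipulation of the piecewise formula \eqref{formula:chi_ud} reduces to $(d_1+d_2+1)(2d_1d_2+d_1+d_2+1)$, delivering the lower bound.

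For the upper bound, I would adapt Subsection \ref{subsec:2.6} to the bi-graded setting by introducing \emph{two} auxiliary variables $U,V$ and bi-grading $\bQ[\uX,\uX^*,U,V]$ so that the $X_i$'s and $U$ have bi-degree $(1,0)$ while the $X_i^*$'s and $V$ have bi-degree $(0,1)$. Let $I_2$ be the bi-homogeneous ideal generated by
\[
  \det(\uX)-U^2, \quad \det(\uX^*)-V^2, \quad \Phi(\uX,\uX^*),
\]
of respective bi-degrees $(2,0)$, $(0,2)$, $(1,1)$. Separate homogenization in $\uX$ using $U$ and in $\uX^*$ using $V$, combined with Lemma \ref{lemma:I}, gives a surjective $\bQ$-linear map from the bi-degree-$(d_1,d_2)$ piece of $\bQ[\uX,\uX^*,U,V]/I_2$ onto $\QgX_{\le \ud}$, whence the desired upper bound on $\dim_\bQ \QgX_{\le\ud}$.

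The core computation is then the bi-graded Hilbert function of $I_2$. Arguing exactly as in Lemma \ref{lemma:reg_seq_I1}, I would reduce modulo $U$ and $V$ to recover the regular sequence of Lemma \ref{lemma:reg_seq_det}, conclude that $U,V$ together with the three generators of $I_2$ form a regular sequence of bi-homogeneous elements, and then invoke the standard permutation property to deduce that the three generators alone form a regular sequence. The Koszul resolution then delivers the bi-graded Hilbert series
\[
  \frac{(1-S^2)(1-T^2)(1-ST)}{(1-S)^4(1-T)^4}
   = \frac{(1+S)(1+T)(1-ST)}{(1-S)^3(1-T)^3},
\]
whose coefficient of $S^{d_1}T^{d_2}$ has to be shown equal to $(d_1+d_2+1)(2d_1d_2+d_1+d_2+1)$. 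The main obstacle is thus the \emph{dual} combinatorial identity: both the counting expression $\sum \chi_\ud(s)(2s+1)$ and the Hilbert-series coefficient must collapse to the same explicit polynomial in $(d_1,d_2)$. Once they do, the dimension of $\QgX_{\le\ud}$ is sandwiched, any family of $\gM'_{\alpha,j}$ with the prescribed asymptotics is a basis, and the $\bQ$-linear evaluation map is an isomorphism.
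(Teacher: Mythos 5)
Your proposal matches the paper's proof in substance: the same construction of $\gM'_{\alpha,j}$ via Lemma \ref{lemma:gM}, the same linear-independence argument via Lemma \ref{lemma:comb_gM}, the same count via Theorem \ref{thm:comb_bideg}, and the same bi-graded upper bound using two auxiliary homogenizing variables (the paper names them $V,V^*$ rather than $U,V$), a regular-sequence argument traced back to Lemma \ref{lemma:reg_seq_det}, and the identical Koszul/Hilbert-series computation. The only immaterial variations are notational and the paper's small shortcut of exploiting the symmetry $\chi_\ud(s)=\chi_\ud(d_1+d_2-s)$ and Corollary \ref{cor:card_Ed} to evaluate $\sum_s\chi_\ud(s)(2s+1)$.
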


\begin{proof}
The existence of the elements $\gM'_{\alpha,j}$ is established
exactly as in the proof of Lemma \ref{lemma:gM}, upon replacing
everywhere the symbol $d$ by $\ud$, using
\eqref{conditions:decomp_alpha_bideg} and
\eqref{conditions:thm:comb_bideg} instead of
\eqref{conditions:decomp_alpha} and \eqref{conditions:thm:comb_deg}.
Fix such a choice of elements. The fact that they are linearly
independent over $\bQ$ is proved as in \S\ref{subsec:2.5}, upon
observing that the statement of Lemma \ref{lemma:comb_gM} still
holds when $d$ is replaced by $\ud$ and $\gM_{\alpha,j}$ by
$\gM'_{\alpha,j}$. According to Theorem \ref{thm:comb_bideg}, they
form a set of cardinality $\sum_{s=0}^{d_1+d_2} \chi_\ud(s) (2s+1)$.
Since $\chi_\ud(s) = \chi_\ud(d_1+d_2-s)$ for $s=0,\dots,d_1+d_2$,
this cardinality is also given by
\[
 \frac{1}{2} \sum_{s=0}^{d_1+d_2} \Big( \chi_\ud(s) (2s+1)
 + \chi_\ud(s)  (2(d_1+d_2-s)+1) \Big)
 =
 (d_1+d_2+1) \sum_{s=0}^{d_1+d_2}\chi_\ud(s).
\]
A short computation based on the formula \eqref{formula:chi_ud}
shows that the right-most sum is equal to $2d_1d_2+d_1+d_2+1$ (an
alternative approach is to note that this sum is the cardinality of
$E_\ud$ and to use Corollary \ref{cor:card_Ed}).  Thus the elements
$\gM'_{\alpha,j}$ span a subspace of $\QgX_{\le \ud}$ of dimension
$(d_1+d_2+1)(2d_1d_2+d_1+d_2+1)$.  To complete the proof, it remains
only to show that the dimension of $\QgX_{\le \ud}$ is no more than
this.  To that end, we proceed as in \S \ref{subsec:2.6}. We
introduce two new indeterminates $V$ and $V^*$ and, for each
$\un=(n_1,n_2)\in\bN^2$, we denote by $\bQ[\uX,V,\uX^*,V^*]_\un$ the
subspace of $\bQ[\uX,V,\uX^*,V^*]$ whose elements are homogeneous in
$(\uX,V)$ of degree $n_1$ and homogeneous in $(\uX^*,V^*)$ of degree
$n_2$. This makes the polynomial ring $R_2:=\bQ[\uX,V,\uX^*,V^*]$
into a $\bN^2$-graded ring.  Let $I_2$ denote the bi-homogeneous
ideal of $R_2$ generated by
\begin{equation}
 \label{generators:I2}
 \det(\uX)-V^2,\quad
 \det(\uX^*)-(V^*)^2,\quad
 \Phi(\uX,\uX^*).
\end{equation}
Lemma \ref{lemma:I} ensures that we have a surjective $\bQ$-linear
map in each bi-degree $\un$
\begin{equation}
 \label{iso:I2}
 \begin{aligned}
   \big( \bQ[\uX,V,\uX^*,V^*]/I_2 \big)_\un
      &\longrightarrow \QgX_{\le \un}\\
    P(\uX,V,\uX^*,V^*) + I_1
      &\longmapsto P\big(\gX^{(0)},\, 1,\, \gX^{(-1)},\, 1\big).
 \end{aligned}
\end{equation}
In particular, this gives $\dim_\bQ \QgX_{\le \ud} \le H(I_2;\ud)$
where $H(I_2;\un)$ stands for the Hilbert function of $I_2$ at
$\un$, namely the dimension of the domain of the linear map
\eqref{iso:I2}. As in the proof of Lemma \ref{lemma:reg_seq_I1}, we
deduce from Lemma \ref{lemma:reg_seq_det} that the generators
\eqref{generators:I2} of $I_2$ form a regular sequence in $R_2$.
Since these generators are bi-homogeneous of bi-degree $(2,0)$,
$(0,2)$ and $(1,1)$, and since the grading of $R_2$ involves two
sets of $4$ variables, we deduce that the Hilbert series of $I_2$ is
\[
 \begin{aligned}
 \sum_{n_1,n_2\in\bN} H(I_2;n_1,n_2) T_1^{n_1} T_2^{n_2}
 &=
 \frac{(1-T_1^2)(1-T_2^2)(1-T_1T_2)}{(1-T_1)^4(1-T_2)^4}\\
 &=
 (1-T_1T_2) \sum_{n_1,n_2\in\bN}
   (n_1+1)^2(n_2+1)^2 T_1^{n_1}T_2^{n_2}.
 \end{aligned}
\]
This completes the proof as it implies that
\[
 H(I_2;\ud)
 = (d_1+1)^2(d_2+1)^2 - d_1^2d_2^2
 = (d_1+d_2+1)(2d_1d_2+d_1+d_2+1).
\]
\end{proof}

Note that this result implies that the statement of Corollary
\ref{cor1:thm:basis} still holds in bi-degree, with $d$ replaced by
$\ud$.

%
%

\section{Combinatorial study}
 \label{sec:comb}

This section is devoted to the proof of Theorems \ref{thm:comb_deg}
and \ref{thm:comb_bideg}.  As mentioned in \S\ref{subsec:2.1}, we
work within the ring $\bZ[\gamma]$.  We define
\[
 E= \{ \alpha\in\bZ[\gamma] \,;\, \alpha\ge 0 \}
 \et
 E^* = E \setminus \{0\},
\]
and note that, since $\gamma>0$, the sets $E_d$ and $E_\ud$ defined
respectively in Sections \ref{subsec:2.1} and \ref{sec:bideg} are
subsets of $E$.  Our first goal is to provide a more explicit
description of these.


\subsection{A partition}
 \label{subsec:4.1}
We first establish a partition of $E^*$.

\begin{proposition}
 \label{prop:partitionE}
The sets
\begin{align*}
 E^{(+)} &= \{ m+ n\gamma^{-1} \,;\, m,n\ge 1 \} \\
 \text{and}\quad
 E^{(i)} &= \{ m\gamma^{-i}+n\gamma^{-i-2} \,;\, m\ge 1, n\ge 0 \}
 \quad \text{for}\quad i\ge 0
\end{align*}
form a partition $E^* = E^{(+)} \coprod \left( \coprod_{i=0}^\infty
E^{(i)} \right)$ of $E^*$.
\end{proposition}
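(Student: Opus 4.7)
The approach is to work in the $\bZ$-basis $\{1,\gamma^{-1}\}$ of $\bZ[\gamma]$, writing each element uniquely as $\alpha=a+b\gamma^{-1}$ with $a,b\in\bZ$, and to exploit multiplication by $\gamma^{-1}$ as a $\bZ$-linear isomorphism of this lattice. Since $\gamma^{-i}>0$, the containments $E^{(+)},E^{(i)}\subset E^*$ are immediate, and the substance lies in proving pairwise disjointness and exhaustion of $E^*$.

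First I would record the $(a,b)$-descriptions: $E^{(+)}=\{a\ge 1,\ b\ge 1\}$ directly; and using $\gamma^{-2}=1-\gamma^{-1}$, the identity $m+n\gamma^{-2}=(m+n)-n\gamma^{-1}$ yields $E^{(0)}=\{(a,b):\,a\ge 1,\ b\le 0,\ a+b\ge 1\}$. The pivotal step is the identity
\[
 \gamma^{-1}E^{(+)} \,=\, E^{(+)} \sqcup E^{(0)},
\]
which follows from the direct calculation $\gamma^{-1}(m+n\gamma^{-1})=m\gamma^{-1}+n\gamma^{-2}=n+(m-n)\gamma^{-1}$: this shows that $\gamma^{-1}E^{(+)}$ consists exactly of the lattice points with $a\ge 1$ and $a+b\ge 1$, and such a point splits unambiguously according to the sign of $b$ into $E^{(+)}$ (the part with $b\ge 1$) or $E^{(0)}$ (the part with $b\le 0$).

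Since $E^{(i+1)}=\gamma^{-1}E^{(i)}$ by definition and multiplication by $\gamma^{-1}$ is injective, a short induction on $k$ gives, for every $k\ge 1$, the disjoint-union identity
\[
 \gamma^{-k}E^{(+)} \,=\, E^{(+)} \sqcup E^{(0)} \sqcup E^{(1)} \sqcup \cdots \sqcup E^{(k-1)}.
\]
It therefore suffices to prove that every $\alpha\in E^*$ belongs to $\gamma^{-k}E^{(+)}$ for all sufficiently large $k$, for then $\alpha$ sits in exactly one of the pieces of the claimed partition.

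For this exhaustion step, write $\alpha=c+d\gamma^{-1}$ and apply \eqref{formulas:gamma^-i} with $i$ replaced by $-k$ to obtain $\gamma^k=f(k)+f(k-1)\gamma^{-1}$. Then $\gamma^k\alpha=a_k+b_k\gamma^{-1}$ with $a_k=cf(k)+df(k-1)$ and $b_k=cf(k-1)+df(k-2)$; dividing $a_k$ by $f(k-1)$ and $b_k$ by $f(k-2)$ and using $f(k)/f(k-1)\to\gamma$, one finds $a_k\sim\gamma\alpha\cdot f(k-1)$ and $b_k\sim\gamma\alpha\cdot f(k-2)$, both tending to $+\infty$ as $k\to\infty$ because $\gamma\alpha>0$. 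Hence $a_k,b_k\ge 1$ for all large $k$, so $\gamma^k\alpha\in E^{(+)}$. No single step is deep; everything hinges on the identity $\gamma^{-1}E^{(+)}=E^{(+)}\sqcup E^{(0)}$, after which a brief induction and a routine Fibonacci asymptotic finish the proof.
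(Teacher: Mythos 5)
Your proof is correct, and it takes a genuinely different route from the paper's. The paper passes to the coordinate lattice via $\varphi(m+n/\gamma)=(m,n)$, shows that $\{\varphi(\gamma^{-i}),\varphi(\gamma^{-i-2})\}$ is a $\bZ$-basis for each $i\ge 0$, and deduces that the arguments $\arg\varphi(\gamma^{-2i})$ decrease to $-\arctan\gamma$ while $\arg\varphi(\gamma^{-2i-1})$ increase to $\pi-\arctan\gamma$; the pieces $E^{(+)}$ and $E^{(i)}$ are then exactly the angular sectors between consecutive rays, which exhaust the open half-plane $\{\alpha>0\}$. Your argument instead exploits the self-similarity under multiplication by $\gamma^{-1}$: the one-step identity $\gamma^{-1}E^{(+)} = E^{(+)} \sqcup E^{(0)}$ (proved by a direct coordinate computation using $\gamma^{-2}=1-\gamma^{-1}$), together with the tautology $E^{(i+1)}=\gamma^{-1}E^{(i)}$, gives by induction $\gamma^{-k}E^{(+)} = E^{(+)}\sqcup E^{(0)}\sqcup\cdots\sqcup E^{(k-1)}$; the partition then follows once you show each $\alpha\in E^*$ eventually lies in $\gamma^{-k}E^{(+)}$, which you do via the Fibonacci asymptotics of the coefficients of $\gamma^k\alpha$. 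Both proofs are about equally long and ultimately encode the same geometry (the angular sector picture is the shadow of the fact that multiplying by $1/\gamma$ rotates the sector fan by one notch). The paper's version yields as a by-product the explicit monotonicity of the boundary angles, which is pleasant but not needed elsewhere; yours avoids any trigonometric language and reduces the exhaustion step to a clean limit computation with Fibonacci numbers, making the inductive structure of the family $E^{(i)}$ more transparent.
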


\begin{proof} Consider the bijection $\varphi\colon\bZ[\gamma]\to
\bZ^2$ which maps a point $m+n/\gamma$ to its pair of coordinates
$(m,n)$ relative to the basis $\{1,1/\gamma\}$ of $\bZ^2$.  It
identifies $E^*$ with the set of non-zero points $(m,n)$ of $\bZ^2$
whose argument in polar coordinates satisfies
\[
 -\arctan(\gamma) < \arg(m,n) < \pi-\arctan(\gamma).
\]
Using the formulas \eqref{formulas:gamma^-i}, a quick recurrence
argument shows that, for each index $i\ge 0$, the determinant of the
points $\varphi(\gamma^{-i})$ and $\varphi(\gamma^{-i-2})$ is
\[
 \left|
  \begin{matrix}
   f(i-2) &-f(i-1)\\ f(i) &-f(i+1)
  \end{matrix}
 \right|
 =
 (-1)^{i+1}.
\]
This means that $\{ \varphi(\gamma^{-i}), \varphi(\gamma^{-i-2})\}$
forms a basis of $\bZ^2$ for each $i\ge 0$. Since the points
$\varphi(\gamma^{-2i})$ have positive first coordinate, it also
means that $\arg\varphi(\gamma^{-2i})$ is a strictly decreasing
function of $i\ge 0$ starting from $\arg\varphi(\gamma^0)=0$.
Finally, since the points $\varphi(\gamma^{-2i-1})$ have positive
second coordinate, it tells us that $\arg\varphi(\gamma^{-2i-1})$ is
a strictly increasing function of $i\ge 0$ starting from
$\arg\varphi(\gamma^{-1})=\pi/2$.  In other words, we have
\[
 \cdots
 < \arg\varphi(\gamma^{-4})
 < \arg\varphi(\gamma^{-2})
 < \arg\varphi(\gamma^{0})
 < \arg\varphi(\gamma^{-1})
 < \arg\varphi(\gamma^{-3})
 < \arg\varphi(\gamma^{-5})
 < \cdots
\]
We conclude from this that a point $\alpha$ of $E^*$ belongs to
$E^{(+)}$ if and only if $\arg\varphi(\gamma^{0}) <
\arg\varphi(\alpha) < \arg\varphi(\gamma^{-1})$, and that it belongs
to $E^{(i)}$ for some $i\ge 0$ if and only if $\arg\varphi(\alpha)$
lies between $\arg\varphi(\gamma^{-i})$ and
$\arg\varphi(\gamma^{-i-2})$, with the first end point included and
the second excluded.  In particular the sets $E^{(+)}$ and $E^{(i)}$
with $i\ge 0$ are all disjoint.  They cover $E^*$ because the fact
that $\lim_{j\to \infty} f(j)/f(j-1) = \gamma$ implies that
$\arg\varphi(\gamma^{-2i})$ and $\arg\varphi(\gamma^{-2i-1})$ tend
respectively to $-\arctan(\gamma)$ and $\pi-\arctan(\gamma)$ as
$i\to\infty$.
\end{proof}

With our convention that an empty sum is zero, this implies that:

\begin{corollary}
 \label{cor:representation_alpha}
Any $\alpha\in E$ can be written in the form
\begin{equation}
 \label{dec:alpha:bis}
 \alpha = \gamma^{-i_1} + \cdots + \gamma^{-i_s}
\end{equation}
for a choice of integers $s\ge 0$ and $0\le i_1\le \cdots \le i_s$.
\end{corollary}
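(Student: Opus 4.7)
The plan is to deduce the corollary directly from the partition provided by Proposition \ref{prop:partitionE}, by exhibiting an explicit decomposition in each of its parts.

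First I would handle the trivial case: if $\alpha = 0$, take $s = 0$, and the empty sum representation \eqref{dec:alpha:bis} applies by the convention recalled in \S\ref{subsec:2.1}. Otherwise $\alpha \in E^*$, so by Proposition \ref{prop:partitionE} exactly one of the following holds: either $\alpha \in E^{(+)}$, or $\alpha \in E^{(i)}$ for a unique $i \ge 0$.

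Next I would treat each piece separately. If $\alpha \in E^{(+)}$, then by definition $\alpha = m + n\gamma^{-1}$ for some $m,n \ge 1$; writing this as a sum of $m$ copies of $\gamma^0$ followed by $n$ copies of $\gamma^{-1}$ gives a representation \eqref{dec:alpha:bis} with $s = m+n$ and with indices $0 = i_1 = \cdots = i_m \le i_{m+1} = \cdots = i_s = 1$, which is visibly non-decreasing. If instead $\alpha \in E^{(i)}$ for some $i \ge 0$, then $\alpha = m\gamma^{-i} + n\gamma^{-i-2}$ with $m \ge 1$ and $n \ge 0$; writing this as $m$ copies of $\gamma^{-i}$ followed by $n$ copies of $\gamma^{-i-2}$ yields \eqref{dec:alpha:bis} with $s = m+n$ and indices $i = i_1 = \cdots = i_m \le i_{m+1} = \cdots = i_s = i+2$.

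There is no serious obstacle here: every step is essentially a matter of unpacking the definitions of the sets $E^{(+)}$ and $E^{(i)}$ and checking that the indices thus produced can be listed in non-decreasing order. The only content is the partition itself, which has already been established in Proposition \ref{prop:partitionE}, and the observation that in each case the two fixed powers of $\gamma^{-1}$ appearing in the definition have exponents that are comparable, so the concatenation respects the ordering.
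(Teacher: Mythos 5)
Your proof is correct and is exactly what the paper leaves implicit when it writes ``With our convention that an empty sum is zero, this implies that:'' immediately after Proposition \ref{prop:partitionE}. You simply spell out the routine unpacking of the definitions of $E^{(+)}$ and $E^{(i)}$ that the paper takes for granted, so the argument is the same one.
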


We say that a finite non-decreasing sequence of non-negative
integers $\ui=(i_1,\dots,i_s)$ is a \emph{representation} of a point
$\alpha$ of $E$ if it satisfies the condition \eqref{dec:alpha:bis}.
In particular, the only representation of the point $0$ is the empty
sequence.


\subsection{Degree and bi-degree}
For any finite non-decreasing sequence of non-negative integers $\ui
= (i_1, \dots, i_s)$, we define
\begin{equation*}
 \begin{gathered}
 d(\ui) = \sum_{k=1}^s f(i_k), \quad
 d_1(\ui) = \sum_{k=1}^s f(i_k-2), \quad
 d_2(\ui) = \sum_{k=1}^s f(i_k-1), \\
 \ud(\ui) = (d_1(\ui),d_2(\ui))
 \et
 s(\ui) = s.
 \end{gathered}
\end{equation*}
We say that $d(\ui)$, $\ud(\ui)$ and $s(\ui)$ are respectively the
\emph{degree}, \emph{bi-degree} and \emph{size} of the point $\ui$,
while $d_1(\ui)$ and $d_2(\ui)$ are respectively the \emph{first}
and \emph{second partial degrees} of $\ui$.  For the empty sequence,
all these integers are zero.  We also put a partial order on $\bN^2$
by writing $(m,n)\le (m',n')$ if $m\le m'$ and $n\le n'$.  We can
now state and prove:

\begin{proposition}
 \label{prop:degree_alpha}
Let $\alpha = m + n/\gamma \in E^*$ and let $\ui=(i_1,\dots,i_s)$ be
a representation of $\alpha$.  Then we have $d(\ui) \ge |m|+|n|$ and
$\ud(\ui) \ge (|m|,|n|)$.  Both inequalities are equalities if
$i_s\le 1$ or if $i_1, \dots, i_s$ share the same parity. Otherwise,
they become strict inequalities.  Moreover, we have $d_2(\ui) > |n|$
if $i_1,\dots,i_s$ contains a pair of positive integers not of the
same parity.
\end{proposition}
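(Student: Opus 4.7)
The plan is to convert the sum $\alpha=\sum_{k=1}^s\gamma^{-i_k}$ into its $(1,1/\gamma)$-coordinates via the second equality of \eqref{formulas:gamma^-i}. This yields
\[
 m=\sum_{k=1}^s(-1)^{i_k}f(i_k-2),\qquad n=\sum_{k=1}^s(-1)^{i_k+1}f(i_k-1).
\]
Because $f(-1)=0$ and $f(j)\ge 0$ for every $j\ge -2$, the triangle inequality gives $|m|\le d_1(\ui)$ and $|n|\le d_2(\ui)$ directly; summing these two inequalities and applying the Fibonacci recurrence $f(i_k)=f(i_k-1)+f(i_k-2)$ yields $d(\ui)=d_1(\ui)+d_2(\ui)\ge |m|+|n|$.

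The equality versus strictness dichotomy then reduces to signed bookkeeping of individual summands. A one-index table records that $i_k$ contributes $(+1,0)$ when $i_k=0$, $(0,+1)$ when $i_k=1$, $(+f(i_k-2),-f(i_k-1))$ with both entries non-zero when $i_k\ge 2$ is even, and $(-f(i_k-2),+f(i_k-1))$ with both entries non-zero when $i_k\ge 3$ is odd. Both equality cases are then immediate: when $i_s\le 1$ every contribution to each sum is a non-negative multiple of a basis vector, while when all $i_k$ share parity all non-zero contributions to $m$ have one fixed sign and all non-zero contributions to $n$ have one fixed (opposite) sign.

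For the strict case, assume $i_s\ge 2$ and pick $i_\ell$ of parity opposite to $i_s$. The table shows $i_s$ contributes non-zero terms of definite signs to both sums; a short case analysis (splitting on the parity of $i_s$ and on whether $i_\ell$ lies in $\{0,1\}$ or is at least $2$) exhibits in each case a pair of non-zero summands of opposite signs in at least one of the $m$- or $n$-sum, making that triangle inequality strict and therefore $d(\ui)>|m|+|n|$ and $\ud(\ui)\ne (|m|,|n|)$. The final statement is the simplest instance of the same idea: if $i_p\ge 1$ is odd and $i_q\ge 2$ is even, then the $n$-sum contains $+f(i_p-1)\ge 1$ and $-f(i_q-1)\le -1$, so the triangle inequality is strict in $n$ and $|n|<d_2(\ui)$.

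The main obstacle is purely organisational: the boundary indices $0$ and $1$ behave asymmetrically because $f(-1)=0$, so such an index is ``half-weight'' and only participates in one of the two triangle inequalities. This asymmetry is precisely why the clause $i_s\le 1$ must be singled out from the all-same-parity clause in the equality description, and why the final strictness statement requires the opposite-parity indices to both be strictly positive.
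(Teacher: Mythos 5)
Your proposal is correct and follows exactly the same route as the paper: derive the explicit expressions for $m$ and $n$ from \eqref{formulas:gamma^-i}, apply the triangle inequality to obtain $|m|\le d_1(\ui)$ and $|n|\le d_2(\ui)$, and then read off the equality/strictness dichotomy from the signs and vanishing of the individual terms using $f(-2)=1$, $f(-1)=0$, $f(i)\ge 1$ for $i\in\bN$. The paper compresses this sign bookkeeping into a single closing sentence; your case table and the observation that strictness in a single coordinate already forces $d(\ui)>|m|+|n|$ and $\ud(\ui)\neq(|m|,|n|)$ simply make that step explicit.
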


\begin{proof}
Since $\alpha=\gamma^{-i_1} + \cdots + \gamma^{-i_s}$, the formulas
\eqref{formulas:gamma^-i} imply that
\[
 m = \sum_{k=1}^s (-1)^{i_k} f(i_k-2)
 \et
 n = \sum_{k=1}^s (-1)^{i_k+1} f(i_k-1)
\]
{}From this we deduce that
\[
 |m| \le \sum_{k=1}^s f(i_k-2) = d_1(\ui)
 \et
 |n| \le \sum_{k=1}^s f(i_k-1) = d_2(\ui),
\]
and the conclusion follows because $f(-2)=1$, $f(-1)=0$ and $f(i)
\ge 1$ for each $i\in\bN$.
\end{proof}

Since, by Proposition \ref{prop:partitionE}, each $\alpha\in E^*$
admits a representation $\ui=(i_1,\dots,i_s)$ with $i_s\le 1$ or
with $i_1,\dots,i_s$ of the same parity, we deduce that

\begin{corollary}
 \label{cor:deg/bideg}
Each $\alpha = m + n/\gamma \in E$ admits a representation with
largest degree $d(\alpha):=|m|+|n|$ and largest bi-degree
$\ud(\alpha):= (|m|,|n|)$.
\end{corollary}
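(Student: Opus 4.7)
The plan is to combine Proposition \ref{prop:partitionE} with the equality cases of Proposition \ref{prop:degree_alpha}. For each $\alpha\in E$, I will exhibit one explicit representation $\ui$ falling into one of the two equality clauses of Proposition \ref{prop:degree_alpha}, namely either $i_s\le 1$ or $i_1,\dots,i_s$ sharing a common parity. Proposition \ref{prop:degree_alpha} then immediately delivers $d(\ui)=|m|+|n|$ and $\ud(\ui)=(|m|,|n|)$, which is exactly what the corollary asserts.

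Concretely, I split $E$ into the pieces supplied by Proposition \ref{prop:partitionE}. If $\alpha=0$, the empty sequence is a representation of $\alpha$ and yields degree $0$ and bi-degree $(0,0)$, matching $|m|+|n|=0$ and $(|m|,|n|)=(0,0)$. If $\alpha\in E^{(+)}$, write $\alpha=m+n\gamma^{-1}$ with $m,n\ge 1$ and take the representation consisting of $m$ copies of $0$ followed by $n$ copies of $1$; this has $i_s=1$, so the first equality clause of Proposition \ref{prop:degree_alpha} applies. If $\alpha\in E^{(i)}$ for some $i\ge 0$, write $\alpha=m\gamma^{-i}+n\gamma^{-i-2}$ with $m\ge 1$ and $n\ge 0$ and take the representation formed by $m$ copies of $i$ followed by $n$ copies of $i+2$; all entries then share the parity of $i$, so the second equality clause applies. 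Since $\{0\}\cup E^{(+)}\cup\bigcup_{i\ge 0} E^{(i)}$ exhausts $E$, this covers every $\alpha$.

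There is no real obstacle: the combinatorial substance has already been carried by Propositions \ref{prop:partitionE} and \ref{prop:degree_alpha}. The corollary is a straightforward assembly, the only task being to name, in each stratum of the partition of $E$, the obvious representative whose entries satisfy the parity or small-index hypothesis that makes the lower bounds of Proposition \ref{prop:degree_alpha} tight.
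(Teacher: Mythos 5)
Your proof is correct and follows essentially the same route as the paper, which likewise obtains the corollary by observing that the representations furnished by Proposition \ref{prop:partitionE} satisfy either $i_s\le 1$ or the equal-parity condition and then applying the equality clause of Proposition \ref{prop:degree_alpha}. The only cosmetic caveat is that in the $E^{(i)}$ stratum you reuse the letters $m,n$ for the stratum coefficients, which are not the basis coordinates of $\alpha$ appearing in Proposition \ref{prop:degree_alpha}; this does not affect the argument since the equality clause is stated in terms of the basis coordinates regardless of how the representation is produced.
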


We say that the integers $d(\alpha)$ and $\ud(\alpha)$ defined in
the above corollary are respectively the \emph{degree} and
\emph{bi-degree} of $\alpha$.

Let $d\in\bN$ and $\ud=(d_1,d_2)\in\bN^2$.  In \S\ref{subsec:2.1}
(resp.\ \S\ref{sec:bideg}), we defined $E_d$ (resp.\ $E_\ud$) as the
set of points which admit a representation of degree $\le d$ (resp.\
of bi-degree $\le \ud$).  According to the corollary, it can also be
described as the set of elements of $E$ with degree $\le d$ (resp.\
with bi-degree $\le \ud$):
\[
 E_d = \{ m+n/\gamma\in E \,;\, |m|+|n| \le d \}
 \et
 E_\ud = \{ m+n/\gamma\in E \,;\, |m|\le d_1, \ |n| \le d_2 \}.
\]
We can now easily compute the cardinality of these sets.

\begin{corollary}
 \label{cor:card_Ed}
Let $d\in\bN$ and $\ud=(d_1,d_2)\in\bN^2$.  Then, we have $|E_d| =
d^2+d+1$ and $|E_\ud| = 2d_1d_2+d_1+d_2+1$.
\end{corollary}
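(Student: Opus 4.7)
The plan is to translate the problem into a lattice point count in $\bZ^2$ via the identification $\bZ[\gamma] \cong \bZ \oplus \bZ\cdot(1/\gamma)$, and then exploit a simple sign symmetry.

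First, I would invoke the explicit descriptions of $E_d$ and $E_\ud$ stated just before the corollary, namely $E_d = \{m+n/\gamma \in E : |m|+|n| \le d\}$ and $E_\ud = \{m+n/\gamma \in E : |m|\le d_1,\, |n|\le d_2\}$, where $E = \{\alpha \in \bZ[\gamma] : \alpha \ge 0\}$. Via the bijection $m+n/\gamma \leftrightarrow (m,n)$, counting elements of $E_d$ (resp.\ $E_\ud$) reduces to counting integer points $(m,n)$ in the corresponding region of $\bZ^2$ satisfying the positivity condition $m + n/\gamma \ge 0$.

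The key observation is that since $\gamma$ is irrational, the equality $m + n/\gamma = 0$ with $(m,n)\in\bZ^2$ holds only when $(m,n)=(0,0)$. Consequently, for any nonzero $(m,n)\in\bZ^2$, exactly one of $(m,n)$ and $(-m,-n)$ satisfies the strict inequality $m + n/\gamma > 0$. Both regions $\{|m|+|n|\le d\}$ and $\{|m|\le d_1,\, |n|\le d_2\}$ are symmetric under $(m,n)\mapsto(-m,-n)$, so the positive half of the nonzero lattice points has exactly half the cardinality of the whole set minus the origin; adding the origin back (since $0 \in E$) yields the formulas.

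Concretely, the standard diamond count gives $|\{(m,n)\in\bZ^2 : |m|+|n|\le d\}| = 2d^2+2d+1$, so
\[
 |E_d| = \frac{(2d^2+2d+1) - 1}{2} + 1 = d^2+d+1,
\]
while $|\{(m,n)\in\bZ^2 : |m|\le d_1,\,|n|\le d_2\}| = (2d_1+1)(2d_2+1)$ gives
\[
 |E_\ud| = \frac{(2d_1+1)(2d_2+1) - 1}{2} + 1 = 2d_1d_2+d_1+d_2+1.
\]
There is no real obstacle here; the only point requiring care is the appeal to the irrationality of $\gamma$ to ensure that no nonzero lattice point lies on the line $m + n/\gamma = 0$, so that the involution $(m,n)\mapsto(-m,-n)$ produces a clean two-to-one pairing between nonzero points of the symmetric region and pairs consisting of one positive and one negative element of $\bZ[\gamma]$.
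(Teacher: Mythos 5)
Your proof is correct and takes essentially the same approach as the paper: both identify $E_d$ (resp.\ $E_{\ud}$) with the positive half of a centrally symmetric lattice region via the description preceding the corollary, observe that $(m,n)\mapsto(-m,-n)$ gives a clean pairing on the nonzero points (the paper leaves the irrationality of $\gamma$ implicit, which you make explicit), and conclude $|E| = 1 + \tfrac{1}{2}|\cL\setminus\{0\}|$.
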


\begin{proof}
Denote by $\cL$ the set of all non-zero points $(m,n)$ in $\bZ^2$
satisfying $|m|+|n| \le d$ (resp.\ $|m|\le d_1$ and $|n|\le d_2$).
Define also $\cL^+$ to be the set of points $(m,n)$ in $\cL$ for
which $m+n/\gamma > 0$.  Then, $E_d\setminus\{0\}$ (resp.\
$E_\ud\setminus\{0\}$) is in bijection with $\cL^+$.  As the sets
$\cL^+$ and $-\cL^+$ form a partition of $\cL$ in two subsets of the
same cardinality, it follows that the cardinality of $E_d$ (resp.\
of $E_\ud$) is $1 + |\cL|/2$, and the conclusion follows upon noting
that $|\cL|$ is $2d(d+1)$ (resp.\ $(2d_1+1)(2d_2+1)-1$).
\end{proof}


\subsection{Representations by quads}
 \label{subsec:3.3}
Let $\alpha\in E$. For each $d\in\bN$ such that $\alpha\in E_d$, we
define the \emph{size $s_d(\alpha)$ of $\alpha$ relative to $d$} to
be the largest size of a representation of $\alpha$ of degree $\le
d$ (see \S\ref{subsec:2.1}).  Similarly, for each $\ud\in\bN^2$ such
that $\alpha\in E_\ud$, we define the \emph{size $s_\ud(\alpha)$ of
$\alpha$ relative to $\ud$} to be the largest size of a
representation of $\alpha$ of bi-degree $\le \ud$ (see
\S\ref{sec:bideg}).  The next proposition shows that, in order to
compute the various degrees and sizes of $\alpha$, it suffices to
consider only representations of the form
\begin{equation}
 \label{dec:quad}
 \alpha = a\gamma^{-i} + b\gamma^{-i-1} + c\gamma^{-i-2}
\end{equation}
with $i,a,b,c\in \bN$, and $a\ge 1$ if $\alpha\neq 0$.

\begin{proposition}
 \label{prop:quad}
Let $d \in \bN^*$ and let $\ud \in \bN^2\setminus\{(0,0)\}$.  Each
$\alpha \in E_d\setminus\{0\}$ admits a representation $\ui =
(i_1,\dots,i_s)$ with degree $d(\ui) \le d$ and size $s =
s_d(\alpha)$ for which $i_s \le i_1+2$.  Similarly, each $\alpha \in
E_\ud\setminus\{0\}$ admits a representation $\ui = (i_1,\dots,i_s)$
with bi-degree $\ud(\ui) \le \ud$ and size $s = s_\ud(\alpha)$ for
which $i_s \le i_1+2$.
\end{proposition}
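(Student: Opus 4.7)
My plan is to take a representation $\ui = (i_1, \ldots, i_s)$ of $\alpha$ realizing the maximum size $s = s_d(\alpha)$ subject to $d(\ui) \le d$, and show that such a $\ui$ may be chosen with $i_s - i_1 \le 2$. Among all maximum-size representations I would select one that in addition minimizes the degree $d(\ui)$ (and for the bi-degree version, one whose bi-degree $\ud(\ui)$ is a minimal element, for the componentwise partial order on $\bN^2$, of the set of bi-degrees of maximum-size representations of bi-degree $\le \ud$; such minimal elements exist by well-foundedness). I would then argue by contradiction that the support of this $\ui$ has diameter at most $2$.

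The central tool is a family of identities, one for each integer $k \ge 3$, obtained by multiplying by $\gamma^{-x}$ the canonical spread-$\le 2$ representation of $1 + \gamma^{-k}$ in the appropriate stratum of the partition of $E^*$ from Proposition~\ref{prop:partitionE}:
\[
 \gamma^{-x} + \gamma^{-(x+k)} = \sum_{l=1}^{s_k} \gamma^{-(x + j^{(k)}_l)},
 \qquad x \ge 0.
\]
For $k = 3$ this specialises to $\gamma^{-x} + \gamma^{-(x+3)} = 2\gamma^{-(x+1)}$, which follows from the relation $\gamma + \gamma^{-2} = 2$ in $\bZ[\gamma]$; a direct Fibonacci computation shows that the corresponding substitution preserves the size, strictly decreases the degree by $2f(x)$, and decreases the two partial bi-degrees by $2f(x-2)$ and $2f(x-1)$, which are both non-negative and not simultaneously zero for $x \ge 0$. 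For $k \ge 4$ one obtains $s_k \ge 3$, and an analogous Fibonacci computation shows that the substitution weakly decreases the degree and each partial bi-degree.

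Given these ingredients, the argument runs as follows. Suppose $\ui$ has $i_s - i_1 = k \ge 3$. Apply the identity above with $x = i_1$, replacing the pair $\{i_1, i_s\}$ in $\ui$ by the $s_k$ indices $\{i_1 + j^{(k)}_l\}_l \subset \bN$, to obtain a new representation $\ui'$ of $\alpha$. For $k = 3$ one has $s(\ui') = s$ while $d(\ui') < d(\ui)$ (or, in the bi-degree setting, $\ud(\ui')$ is strictly smaller than $\ud(\ui)$ in at least one coordinate and not larger in the other), contradicting the minimality of $\ui$. For $k \ge 4$ one has $s(\ui') > s$ while $d(\ui') \le d$ (respectively $\ud(\ui') \le \ud$), contradicting the maximality of $s$. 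Hence $i_s - i_1 \le 2$.

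The main obstacle is the uniform verification, for all $k \ge 4$, that $s_k \ge 3$ and that the substitution does not increase the degree nor either partial bi-degree. I would handle this by first identifying the stratum containing $1 + \gamma^{-k}$ via an inspection of $(f(-k), f(-k-1))$ essentially as in the proof of Proposition~\ref{prop:partitionE}, writing down the canonical $(j^{(k)}_l)$ explicitly in each parity, and then using the expansion $f(x+j) = U_{j+1} f(x) + U_j f(x-1)$ (with $U$ the standard Fibonacci sequence satisfying $U_0 = 0$, $U_1 = 1$) to reduce each degree inequality to a numerical identity among Fibonacci numbers, verifiable by induction on $k$ together with a case split on its parity.
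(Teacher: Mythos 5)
Your proposal follows essentially the same route as the paper: pick a maximum-size representation that further minimizes the (bi-)degree, then use the spread-$\le 2$ identity for $\gamma^{-x}+\gamma^{-(x+k)}$ to contradict either minimality (gap $3$, size preserved, degree drops) or maximality (gap $\ge 4$, size grows by at least one). The only meaningful difference is in how the degree and bi-degree comparisons are verified: you propose a case-by-case Fibonacci induction, whereas the paper shortcuts this entirely by noting that the substituted indices all share the same parity, so Proposition~\ref{prop:degree_alpha} gives $d(\uj)=d(\beta)\le d(i_1,i_s)$ (and likewise for $\ud$), with strictness governed by the parity of the original pair; you may wish to invoke that proposition rather than redo the Fibonacci arithmetic.
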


\begin{proof}
Let $\alpha \in E_d\setminus\{0\}$.  Put $s=s_d(\alpha)$, and choose
a representation $\ui=(i_1,\dots,i_s)$ of $\alpha$ of size $s$ with
minimal degree.  We claim that $\ui$ has all the required
properties.  First it satisfies $d(\ui)\le d$ by definition of
$s_d(\alpha)$.  It remains to show that $i_s\le i_1+2$.

To show this, we first observe that, for any pair of integers
$(p,k)$ with $k\ge 1$, we have
\begin{align*}
  \gamma^{-p} + \gamma^{-p-2k-1}
  &= \Big( \sum_{i=0}^{k-1} \gamma^{-p-2i-1} \Big)
     + \gamma^{-p-2k+1} \\
  \gamma^{-p} + \gamma^{-p-2k-2}
  &= \gamma^{-p-2}
     + \Big( \sum_{i=1}^{k} \gamma^{-p-2i} \Big)
     + \gamma^{-p-2k}.
\end{align*}
Assuming that $i_s\ge i_1+3$, these formulas show that the point
$\beta = \gamma^{-i_1} + \gamma^{-i_s}$ admits a representation $\uj
= (j_1,\dots,j_t)$ with coordinates of the same parity as $i_s$,
size $t=2$ if $i_s = i_1+3$, and size $t\ge 3$ if $i_s > i_1+3$.  In
this case, Proposition \ref{prop:degree_alpha} gives $d(\uj) =
d(\beta)$ and also $d(\beta) \le d(i_1,i_s)$ with the strict
inequality if $i_s=i_1+3$. Then, upon reorganizing terms in the
decomposition
\[
 \alpha = (\gamma^{-j_1}+\cdots+\gamma^{-j_t})
          + (\gamma^{-i_2}+\cdots+\gamma^{-i_{s-1}}),
\]
we get a representation $\ui'$ of $\alpha$ with degree $d(\ui') =
d(\ui) + d(\beta) - d(i_1,i_s)$ and size $s(\ui')=s+t-2$.  If $i_s =
i_1+3$, we have $d(\ui') < d(\ui)$ and $s(\ui')=s$ in contradiction
with the choice of $\ui$.  If $i_s> i_1+3$, we find that $d(\ui')
\le d(\ui) \le d$ and $s(\ui') > s = s_d(\alpha)$ in contradiction
with the definition of $s_d(\alpha)$.  Thus, we must have $i_s\le
i_1+2$.

This proves the first assertion of the proposition. The proof of the
second assertion is the same provided that one replaces everywhere
the word ``degree'' by ``bi-degree'', and the symbol $d$ by $\ud$.
\end{proof}

We define a \emph{quad} $q$ to be an expression of the form $q =
(i\,;\,a,b,c)$ with $i,a,b,c\in\bN$ and $a\ge 1$.  We say that a
quad $q$ as above represents a point $\alpha\in E$ if it satisfies
\eqref{dec:quad}.  Identifying it with the sequence formed by $a$
occurrences of $i$ followed by $b$ occurrences of $i+1$ and $c$
occurrences of $i+2$, the various notions of degree and size
translate to
\begin{equation}
 \label{formulas:degree&bidegree}
 \begin{gathered}
   d_1(q) = af(i-2)+bf(i-1)+cf(i), \quad
   d_2(q) = af(i-1)+bf(i)+cf(i+1), \\
   d(q) = d_1(q)+d_2(q), \quad
   \ud(q) = \big( d_1(q),\ d_2(q) \big)
   \et
   s(q) = a+b+c.
\end{gathered}
\end{equation}
In this context, Proposition \ref{prop:quad} shows that for any
$\alpha\in E^*$ and any integer $d\ge d(\alpha)$ (resp.\ any integer
pair $\ud \ge \ud(\alpha)$), the integer $s_d(\alpha)$ (resp.\
$s_\ud(\alpha)$) is the largest size of a quad of degree $\le d$
(resp.\ of bi-degree $\le \ud$) which represents $\alpha$.


\subsection{Sequences of quads}
 \label{subsec:4.4}
For each $\alpha\in E^*$, we denote by $Q_\alpha$ the set of quads
which represent $\alpha$, and, for each $\ud\in\bN^2$, we denote by
$Q_\ud$ the set of quads of bi-degree $\ud$.  Although we use the
same letter for both kinds of sets, the nature of the subscript
should in practice remove any ambiguity.  As we will see these
families have similar properties.  We start with those of the first
kind.

\begin{proposition}
 \label{prop:Qalpha}
Let $\alpha\in E^*$.  The set $Q_\alpha$ of all quads representing
$\alpha$ is an infinite set whose elements have distinct size. If we
order its elements by increasing size, then their sizes form an
increasing sequence of consecutive integers while their degrees,
bi-degrees, and second partial degrees form strictly increasing
sequences in $\bN$, $\bN^2$ and $\bN$ respectively. The element of
$Q_\alpha$ of smallest size is the quad of degree $d(\alpha)$ and
bi-degree $\ud(\alpha)$ associated to the representation of $\alpha$
given by Proposition \ref{prop:partitionE}.
\end{proposition}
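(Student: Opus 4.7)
\emph{Proof plan.} The plan is to parametrize $Q_\alpha$ by a pair $(i, a)$ where $i$ is the first entry of the quad and $a$ is its number of $\gamma^{-i}$ summands. For each integer $i$, I would define $A_i = A_i(\alpha)$ and $B_i = B_i(\alpha)$ in $\bZ$ by $\alpha = A_i \gamma^{-i-1} + B_i \gamma^{-i-2}$. Rewriting $a\gamma^{-i} = a\gamma^{-i-1} + a\gamma^{-i-2}$, one sees that a quad $(i; a, b, c)$ represents $\alpha$ if and only if $b = A_i - a$ and $c = B_i - a$. Hence the quads in $Q_\alpha$ at level $i$ are parametrized by integers $a$ with $1 \le a \le \min(A_i, B_i)$, and their sizes $A_i + B_i - a$ form the block of consecutive integers from $\max(A_i, B_i)$ to $A_i + B_i - 1$.

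The Fibonacci relation $\gamma^{-i-1} = \gamma^{-i-2} + \gamma^{-i-3}$ yields the recursion $A_{i+1} = A_i + B_i$, $B_{i+1} = A_i$, which implies that once $\min(A_i, B_i) \ge 1$ the same holds at $i+1$. Thus $\{i \ge 0 \,;\, \min(A_i, B_i) \ge 1\}$ is of the form $\{i \,;\, i \ge i_0\}$ for some $i_0 \in \bN$. Using Proposition \ref{prop:partitionE}, I would identify $i_0$ case by case: if $\alpha = m + n\gamma^{-1} \in E^{(+)}$, then $A_0 = m+n$ and $B_0 = m$, yielding $i_0 = 0$ and canonical quad $(0; m, n, 0)$; if $\alpha = m\gamma^{-j} + n\gamma^{-j-2} \in E^{(j)}$, then $A_j = m$, $B_j = m+n$, while the backwards recursion gives $B_{j-1} = -n \le 0$, so $i_0 = j$ and canonical quad $(j; m, 0, n)$. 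In both cases the quad at level $i_0$ with $a = \min(A_{i_0}, B_{i_0})$, which has the smallest size at that level, is precisely the canonical quad of Proposition \ref{prop:partitionE}, whose degree and bi-degree are $d(\alpha)$ and $\ud(\alpha)$ by Corollary \ref{cor:deg/bideg}.

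Because $A_{i+1} - B_{i+1} = B_i \ge 1$ for $i \ge i_0$, the smallest size at level $i+1$ equals $A_{i+1} = A_i + B_i$, exactly one more than the largest size $A_i + B_i - 1$ at level $i$. So the levels glue together to put $Q_\alpha$ in bijection with the set of integers $\ge \max(A_{i_0}, B_{i_0})$, establishing the first two assertions of the proposition. For the monotonicity of degrees, bi-degrees and second partial degrees, I would observe that both the within-level split $(i; a, b, c) \to (i; a-1, b+1, c+1)$ and the level-transition $(i; 1, A_i - 1, B_i - 1) \to (i+1; A_i, B_i, 0)$ increment $d_1$ by $f(i+1) - f(i-2)$ and $d_2$ by $f(i+2) - f(i-1)$. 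The first is nonnegative, and zero only when $i = 0$; the second is strictly positive for every $i \ge 0$. This yields strict monotonicity of $d_2$, of the bi-degree in the product order on $\bN^2$, and of the total degree $d = d_1 + d_2$.

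The main obstacle will be the clean setup of the auxiliary sequences $(A_i)$ and $(B_i)$ and the separate treatment of the $E^{(+)}$ and $E^{(j)}$ cases to pinpoint $i_0$ and match the resulting minimal quad with the one coming from Proposition \ref{prop:partitionE}; once that step is done, the consecutiveness of the sizes and the monotonicity check both reduce to routine bookkeeping with the Fibonacci recurrence.
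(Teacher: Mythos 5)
Your argument is correct, and it is a legitimate variant of the paper's proof rather than an exact copy. The paper works with a single ``successor'' map $\theta$ on $Q_\alpha$, built from the relation $\gamma^{-i}=\gamma^{-i-1}+\gamma^{-i-2}$ (with one case for $a\ge 2$ and one for $a=1$), shows $\theta$ is injective and raises the size by $1$, and then observes that every quad is uniquely $\theta^j(q_0)$ with $q_0$ outside the image of $\theta$; such $q_0$ must have the shape $(0;a,b,0)$ or $(i;a,0,c)$, so by Proposition \ref{prop:partitionE} it is the canonical representation, and the bi-degree increment $\ud(\theta(q))=\ud(q)+(2f(i-1),2f(i))$ gives the monotonicity. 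Your two moves (the within-level split and the level transition) are exactly the two cases of $\theta$, but instead of the orbit argument you give an explicit two-parameter enumeration: writing $\alpha=A_i\gamma^{-i-1}+B_i\gamma^{-i-2}$, the quads at level $i$ correspond bijectively to $1\le a\le\min(A_i,B_i)$, the sizes at level $i$ fill the block $[\max(A_i,B_i),\,A_i+B_i-1]$, and the Fibonacci recursion $A_{i+1}=A_i+B_i$, $B_{i+1}=A_i$ shows the admissible levels form a final segment $\{i\ge i_0\}$ whose blocks glue consecutively. This buys a closed-form description of $Q_\alpha$ (which sizes occur at which level, and how many quads each level carries), at the cost of somewhat heavier bookkeeping and a case analysis to locate $i_0$; the paper's successor-map argument is shorter and avoids coordinates altogether. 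To make your write-up airtight you should add three small remarks: (i) $A_i,B_i$ are well defined integers because $\{\gamma^{-i-1},\gamma^{-i-2}\}$ is a $\bZ$-basis of $\bZ[\gamma]$ (the determinant computation is the Cassini identity, as in the proof of Proposition \ref{prop:partitionE}), and they are uniquely determined since $\gamma\notin\bQ$; (ii) the backwards step $B_{j-1}=-n$ needs the trivial adjustment when $j=0$, where $i_0=0$ automatically; (iii) for the degree and bi-degree of the minimal quad it is cleaner to invoke the equality case of Proposition \ref{prop:degree_alpha} directly (the canonical quad has all indices $\le 1$ or of equal parity) rather than Corollary \ref{cor:deg/bideg}, which only asserts existence of such a representation.
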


\begin{proof}
The relation $\gamma^{-i} = \gamma^{-i-1}+\gamma^{-i-2}$ shows that,
for each $q = (i\,;\,a,b,c) \in Q_\alpha$, the quad
\begin{equation}
 \label{def:theta}
 \theta(q)
 = \begin{cases}
    (i\,;\,a-1,b+1,c+1) &\text{if $a\ge 2$,}\\
    (i+1\,;\,b+1,c+1,0) &\text{if $a = 1$}
   \end{cases}
\end{equation}
also represents $\alpha$.  This defines an injective map $\theta$
from $Q_\alpha$ to itself, which increases the size of a quad by
$1$.  Since the size of any quad is finite and non-negative, this
implies that any $q\in Q_\alpha$ can be written in a unique way in
the form $q = \theta^j(q_0)$ where $j \in \bN$ and where $q_0$ is an
element of $Q_\alpha$ which does not belong to the image of
$\theta$.  The latter condition on $q_0$ means that it is of the
form $q_0 = (0\,;\, a, b, 0)$ with $a,b\ge 1$ or $q_0 = (i\,;\, a,
0, c)$ with $a\ge 1$.  According to Proposition
\ref{prop:partitionE}, there exists exactly one representation of
$\alpha$ of that form and, by Proposition \ref{prop:degree_alpha},
it has degree $d(\alpha)$ and bi-degree $\ud(\alpha)$.  Thus $q_0$
is the element of $Q_\alpha$ of smallest size, and we can organize
$Q_\alpha$ in a sequence $\big(\theta^j(q_0)\big)_{j\ge 0}$ where
the size increases by steps of $1$  Along this sequence, the degree,
bi-degree and second partial degree are strictly increasing, since
for any $q=(i\,;\,a,b,c) \in Q_\alpha$, the formula
\eqref{def:theta} implies that $\ud(\theta(q)) = \ud(q) + (2f(i-1),
2f(i))$.
\end{proof}

The proof of the above proposition provides an explicit recursive
way of constructing the elements of $Q_\alpha$ by order of
increasing size: given any $q\in Q_\alpha$ the next element is
$\theta(q)$.  We will not use this explicit formula in the sequel,
except in the proof of the second corollary below.

\begin{corollary}
 \label{cor:char_size}
Let $d \in \bN^*$, $\ud\in\bN^2\setminus\{(0,0)\}$, and $\alpha\in
E^*$. If $\alpha\in E_d$, then $s_d(\alpha)$ is the size of the quad
of largest degree $\le d$ which represents $\alpha$.  If $\alpha\in
E_\ud$, then $s_\ud(\alpha)$ is the size of the quad of largest
bi-degree $\le \ud$ which represents $\alpha$.
\end{corollary}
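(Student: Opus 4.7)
The plan is to combine the two structural results established just before the statement, namely Proposition \ref{prop:quad} and Proposition \ref{prop:Qalpha}, which together give very precise control over the representations of a given $\alpha \in E^*$.

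First, I would unpack the definitions. By definition, $s_d(\alpha)$ is the largest size of a representation of $\alpha$ of degree $\le d$; Proposition \ref{prop:quad} shows that this supremum is attained by a representation which is actually a quad (i.e., supported on three consecutive indices). Hence $s_d(\alpha)$ equals the largest size of a quad in $Q_\alpha$ of degree $\le d$. The analogous reduction holds for $s_\ud(\alpha)$, using the bi-degree version of Proposition \ref{prop:quad}.

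Next, I would invoke Proposition \ref{prop:Qalpha}: when the elements of $Q_\alpha$ are ordered by increasing size, their degrees form a strictly increasing sequence in $\bN$, and their bi-degrees form a strictly increasing sequence in $\bN^2$ (for the product order). In particular, for a quad $q\in Q_\alpha$, the condition $d(q)\le d$ is equivalent to $s(q)\le s_d(\alpha)$, so the quad of largest size with degree $\le d$ is also the quad of largest degree $\le d$. This gives the first assertion. For the second assertion, the same monotonicity — now in $\bN^2$ — shows that among the quads in $Q_\alpha$ the condition $\ud(q)\le \ud$ selects an initial segment of the sequence ordered by size, so its maximal element both realizes the largest size and the largest bi-degree $\le \ud$.

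There is no real obstacle here: the work has been done in the two preceding propositions, and the corollary is essentially a repackaging. The only point that deserves a line of explanation is that, although $\bN^2$ is only partially ordered, the bi-degrees of the quads in $Q_\alpha$ form a totally ordered chain (by Proposition \ref{prop:Qalpha}), so ``the quad of largest bi-degree $\le \ud$'' is unambiguously defined whenever it exists.
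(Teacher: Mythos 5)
Your proof is correct and follows essentially the same route as the paper's: both reduce via Proposition \ref{prop:quad} to quads in $Q_\alpha$, then invoke the monotonicity of degree and bi-degree along $Q_\alpha$ from Proposition \ref{prop:Qalpha}. The remark that the bi-degrees in $Q_\alpha$ form a chain in the partially ordered set $\bN^2$ is a worthwhile clarification that the paper leaves implicit.
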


\begin{proof}
Suppose that $\alpha\in E_d$.  Then, by Proposition \ref{prop:quad},
the size $s_d(\alpha)$ of $\alpha$ relative to $d$ is the largest
size achieved by a quad of degree $\le d$ in $Q_\alpha$. By
Proposition \ref{prop:Qalpha}, this is also the size of the quad of
largest degree $\le d$ in $Q_\alpha$.  The proof of the assertion in
bi-degree $\ud$ is similar.
\end{proof}

\begin{corollary}
 \label{cor:comp_size}
Let $d \in \bN^*$ and $\alpha\in E_d$.  There exists one and only
one representative $(i\,;\, a, b, c)$ of $\alpha$ which satisfies
\begin{equation}
 \label{ineq:comp_size}
 d-2f(i+1) < af(i) + bf(i+1) + cf(i+2) \le d.
\end{equation}
For this choice of quad, one has $s_d(\alpha)= a+b+c$.
\end{corollary}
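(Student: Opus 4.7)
The plan is to combine Corollary \ref{cor:char_size}, which identifies $s_d(\alpha)$ with the size of the quad of largest degree at most $d$ that represents $\alpha$, with the explicit recursive structure of $Q_\alpha$ supplied by Proposition \ref{prop:Qalpha} and its proof via the successor map $\theta$.

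First I will recast the middle quantity in \eqref{ineq:comp_size} as the total degree of $q = (i\,;\,a,b,c)$. Summing the expressions for $d_1(q)$ and $d_2(q)$ in \eqref{formulas:degree&bidegree} and applying the Fibonacci recursion $f(j-2)+f(j-1)=f(j)$, one finds
\[
 d(q) = af(i) + bf(i+1) + cf(i+2),
\]
so \eqref{ineq:comp_size} is exactly the condition $d-2f(i+1) < d(q) \le d$.

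Second, I will compute the effect of the successor map $\theta$ of \eqref{def:theta} on the degree. A short case analysis ($a\ge 2$ versus $a=1$), each time using $f(i+2)=f(i+1)+f(i)$, gives the uniform formula
\[
 d(\theta(q)) = d(q) + 2f(i+1),
\]
where $i$ is the first entry of $q$. This arithmetic identity is the only concrete calculation in the proof, and it is the main (though mild) obstacle.

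Third, I will assemble the conclusion. By Proposition \ref{prop:Qalpha}, the elements of $Q_\alpha$ can be listed as $(\theta^j(q_0))_{j\ge 0}$ with strictly increasing degrees, and by Corollary \ref{cor:char_size} there is a unique $q\in Q_\alpha$ of maximal degree $\le d$, whose size equals $s_d(\alpha)$. A quad $q = (i\,;\,a,b,c)$ in $Q_\alpha$ is this maximal element if and only if $d(q) \le d$ and $d(\theta(q)) > d$, which by the increment formula is precisely $d-2f(i+1) < d(q) \le d$. This yields existence and uniqueness of the quad satisfying \eqref{ineq:comp_size}, and for this quad $s_d(\alpha) = s(q) = a+b+c$ by the size formula in \eqref{formulas:degree&bidegree}.
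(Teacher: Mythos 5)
Your proof is correct and follows essentially the same path as the paper's: invoke Corollary \ref{cor:char_size} to identify $s_d(\alpha)$ with the size of the quad of maximal degree $\le d$ in $Q_\alpha$, use the $\theta$-enumeration of $Q_\alpha$ from Proposition \ref{prop:Qalpha} to characterize that quad as the unique $q$ with $d(q)\le d < d(\theta(q))$, and then translate $d(\theta(q))>d$ into $d-2f(i+1)<d(q)$. You simply spell out the increment $d(\theta(q))=d(q)+2f(i+1)$ explicitly, which the paper leaves to the reader.
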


\begin{proof}
By Corollary \ref{cor:char_size} and the remark following
Proposition \ref{prop:Qalpha}, the integer $s_d(\alpha)$ is the size
of the unique quad $q$ in $Q_\alpha$ satisfying $d(q) \le d <
d(\theta(q))$.  Upon writing $q = (i\,;\, a, b, c)$ and using the
formula \eqref{def:theta} for $\theta(q)$, the latter inequality
translates into \eqref{ineq:comp_size}, and the conclusion follows.
\end{proof}

\begin{proposition}
 \label{prop:Qd}
Let $\ud = (d_1,d_2) \in \bN^2\setminus\{(0,0)\}$.  The set $Q_\ud$
of all quads of bi-degree $\ud$ is a finite non-empty set whose
elements have distinct size.  If we order its elements by decreasing
size, then their sizes form a decreasing sequence of consecutive
integers while the points of $E_\ud$ that they represent form a
strictly decreasing sequence of positive real numbers. The element
of $Q_\ud$ of largest size has size $d_1+d_2$ and represents the
point $d_1+d_2/\gamma$, while the element of $Q_\ud$ of smallest
size represents the point $|d_1-d_2/\gamma|$, both points being of
bi-degree $\ud$.  The elements of $Q_\ud$ of intermediate sizes
represent points of bi-degree $\le (d_1,d_2-1)$.
\end{proposition}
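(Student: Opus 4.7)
The approach is to parametrize the quads of bi-degree $\ud$ by their base $i$ together with one auxiliary parameter, and then to track how the size and the represented point vary along the resulting sequence. Writing $v_j=(f(j-2),f(j-1))$, the identity $v_{i+2}=v_i+v_{i+1}$ rewrites the bi-degree equation $(d_1,d_2)=av_i+bv_{i+1}+cv_{i+2}$ as $(a+c,b+c)=(P_i,Q_i)$, where $(P_i,Q_i)$ denote the coordinates of $(d_1,d_2)$ in the basis $(v_i,v_{i+1})$. Starting from $P_0=d_1$ and $Q_0=d_2$, a change of basis forces $Q_{i+1}=P_i$ and $P_{i+1}=Q_i-P_i$, so the single sequence $(P_i)$ obeys the reverse-Fibonacci recurrence $P_{i-1}=P_i+P_{i+1}$ with closed form $P_i=(-1)^i[f(i)d_1-f(i-1)d_2]$. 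The quads of $Q_\ud$ with base $i$ are then exactly $(i;P_i-c,P_{i-1}-c,c)$ with $c$ ranging over $\{0,1,\ldots,\min(P_i-1,P_{i-1})\}$, a range that is non-empty precisely when $P_i\ge 1$ and $P_{i-1}\ge 0$.

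For each such quad, substituting $\gamma^{-i-2}=\gamma^{-i}-\gamma^{-i-1}$ collapses the size and represented point to
\[
 s(q)=P_i+P_{i-1}-c
 \et
 \alpha(q)=P_i\gamma^{-i}+(P_{i-1}-2c)\gamma^{-i-1}.
\]
Let $i_1$ denote the largest index with $P_{i_1}\ge 1$ and $P_{i_1+1}<0$; this is well defined and finite thanks to the closed form for $P_i$, and the valid bases form $\{0,\ldots,i_1\}$ with at most isolated gaps at indices where $P_i=0$ (two consecutive zeros are impossible, as they would propagate back to $d_1=d_2=0$). Incrementing $c$ within a fixed base decreases $s(q)$ by $1$ and $\alpha(q)$ by $2\gamma^{-i-1}>0$. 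The central technical step is to show that the transition from the smallest quad at a valid base $i$ to the largest quad at the next valid base $i'$---which equals $i+1$ in the generic case, or $i+2$ when $P_{i+1}=0$---again decreases the size by exactly $1$ and the represented point by a positive amount, via the identities $P_{i-1}=P_i+P_{i+1}$ and, when needed, $P_{i+2}=P_i$. Together these facts force $Q_\ud$ to be finite, its sizes to form a block of consecutive integers when ordered as stated, and the represented points to strictly decrease.

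The extremes are finally identified by explicit computation. The largest-size element corresponds to $i=0$, $c=0$ (with the obvious modification when $d_1=0$), has size $P_0+P_{-1}=d_1+d_2$, and represents $d_1+d_2/\gamma$ of bi-degree $\ud$. At the other end, the smallest quad $(i_1;P_{i_1}-P_{i_1-1},0,P_{i_1-1})$ has size $P_{i_1}$ and represents $P_{i_1}\gamma^{-i_1}-P_{i_1-1}\gamma^{-i_1-1}$; conversion to the $(1,1/\gamma)$-basis via \eqref{formulas:gamma^-i} together with the closed form for $P_i$ simplifies this to $(-1)^{i_1}(d_1-d_2/\gamma)$, hence to $|d_1-d_2/\gamma|$, again of bi-degree $\ud$. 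For the intermediate quads the same conversion yields $n(q)=(-1)^{i+1}[d_2-2(P_{i-1}-c)f(i)]$, and the equality $|n(q)|=d_2$ forces either $(i,c)=(0,0)$ or $c=P_{i-1}$ with $P_{i+1}<0$---precisely the two extremes---so every intermediate quad represents a point with $|n|<d_2$. The principal obstacle is uniformly handling bases at which $P_{i+1}=0$ so that consecutive sizes are preserved across ``skip'' transitions; the identity $P_{i+2}=P_i$ makes this analysis parallel to the generic case.
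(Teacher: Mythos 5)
Your proposal reaches the same conclusions but by a genuinely different route.  The paper defines a single size-decrementing injection $\psi\colon Q_\ud^+\to Q_\ud$ on quads with $b\ge 1$, identifies the unique quad $q_0\notin\psi(Q_\ud^+)$, and reads off every assertion from the resulting chain $(\psi^j(q_0))_{j=0}^t$; the map $\psi$ silently absorbs the case distinctions that you have to track by hand.  You instead change coordinates to the basis $(v_i,v_{i+1})$, derive the reverse-Fibonacci sequence $P_i=(-1)^i[f(i)d_1-f(i-1)d_2]$, and parametrize $Q_\ud$ by $(i,c)$ with $a=P_i-c$, $b=P_{i-1}-c$.  This buys you closed formulas for the size $P_i+P_{i-1}-c$ and the point $P_i\gamma^{-i}+(P_{i-1}-2c)\gamma^{-i-1}$, and in particular the tidy identity $P_if(i-1)+P_{i-1}f(i)=d_2$ behind your expression $n(q)=(-1)^{i+1}[d_2-2(P_{i-1}-c)f(i)]$, which lets you identify the two quads with $|n|=d_2$ directly rather than through the parity of $\psi$-orbits; I checked the transition computations (both $i\to i+1$ and the skip $i\to i+2$ when $P_{i+1}=0$ decrement the size by $1$ and the real number by $2\gamma^{-i-1}$), the extremes, and the $|n|<d_2$ criterion, and they are all correct.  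The price is that your argument is more case-laden, and one definition needs repair: ``the largest index with $P_{i_1}\ge 1$ and $P_{i_1+1}<0$'' is not well defined, since $P_i$ eventually alternates in sign with growing magnitude, so infinitely many indices satisfy that pair of conditions.  What you want is the \emph{smallest} index $j$ with $P_j<0$ and then $i_1:=j-1$ (equivalently, the largest valid base); with that correction the argument that valid bases form $\{0,\dots,i_1\}$ up to isolated zeros of $P_i$ is sound, because once some $P_j<0$ every subsequent base fails the constraint $P_{i-1}\ge 0$, and two consecutive zeros of $P$ would force $d_1=d_2=0$.
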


\begin{proof}
The set $Q_\ud$ is not empty as it contains the quad
\[
 q_0
   = \begin{cases}
      (0\,;\, d_1,d_2,0) &\text{if $d_1>0$,}\\
      (1\,;\, d_2,0,0) &\text{if $d_1=0$.}
     \end{cases}
\]
Since $f(-2)=1$, $f(-1)=0$ and $f(j)\ge 1$ for each $j\ge 0$, the
formula \eqref{formulas:degree&bidegree} for the bi-degree shows
that $q_0$ is the only element of $Q_\ud$ if $d_1=0$ or if $d_2=0$.
As the proposition is easily verified in that case, we may assume
that $d_1$ and $d_2$ are positive.

Define $Q_\ud^+$ to be the set of quads $q=(i\,;\,a,b,c)$ of $Q_\ud$
with $b\ge 1$.  Then, under our present assumptions, $Q_\ud^+$ is
not empty as it contains the point $q_0$.  Moreover, the recurrence
relation for the function $f$ combined with
\eqref{formulas:degree&bidegree} shows that one defines a map
$\psi\colon Q_\ud^+ \to Q_\ud$ by sending a quad $q = (i\,;\,a,b,c)
\in Q_\ud^+$ to
\begin{equation}
 \label{formula:psi}
 \psi(q)
  = \begin{cases}
     (i\,;\,a-1,b-1,c+1) &\text{if $a\ge 2$,}\\
     (i+1\,;\,b-1,c+1,0) &\text{if $a = 1$ and $b\ge 2$,}\\
     (i+2\,;\,c+1,0,0) &\text{if $a = b = 1$.}
    \end{cases}
\end{equation}
This map is injective and decreases the size of a quad by $1$. Thus
any $q\in Q_\ud$ can be written in a unique way in the form $q =
\psi^j(q_0')$ where $j \in \bN$ and where $q_0'$ is an element of
$Q_\ud$ which does not belong to the image of $\psi$.  This means
that $q_0'$ is of the form $(0\,;\, a, b, 0)$ with $a,b\ge 1$ or
$(1\,;\, a, 0, 0)$ with $a\ge 1$, and thus that $q_0' = q_0$ since
its bi-degree is $\ud$.

The above discussion shows that we can organize $Q_\ud$ in a
sequence $\big(\psi^j(q_0)\big)_{j=0}^t$ where the size decreases by
steps of $1$, starting from the element $q_0$ of $Q_\ud$ of largest
size $d_1+d_2$, and ending with the element $q_t:=\psi^t(q_0)$ of
smallest size. Since the quad $q_t$ does not belong to $Q_\ud^+$, it
has the form $(i\,;\, a, 0, c)$ for some $i\ge 0$.  In the notation
of Proposition \ref{prop:partitionE}, it thus represents a point of
$E^{(i)} \subset E \setminus E^{(+)}$ which, by Proposition
\ref{prop:degree_alpha}, has bi-degree exactly $\ud$. Since $d_1 +
d_2/\gamma \in E^{(+)}$ and $|d_1 - d_2/\gamma| \in E \setminus
E^{(+)}$ are the only points of $E$ of bi-degree $\ud$, we conclude
that $q_0$ and $q_t$ are respectively the representations of $d_1 +
d_2/\gamma$ and $|d_1 - d_2/\gamma|$ coming from Proposition
\ref{prop:partitionE}. All intermediate quads $\psi^j(q_0)$ with
$j=1,\dots,t-1$ belong to $Q_\ud^+ \setminus \{q_0\}$.  They have
the form $(i\,;\,a,b,c)$ with $i=0$ and $a,b,c\ge 1$, or with $i\ge
1$ and $a,b\ge 1$. Therefore, by Proposition
\ref{prop:degree_alpha}, they represent points of bi-degree $\le
(d_1,d_2-1)$. Finally, for any given $q = (i\,;\, a, b, c) \in
Q_\ud^+$, the quad $\psi(q)$ given by \eqref{formula:psi} represents
the point $(a-1)\gamma^{-i} + (b-1)\gamma^{-i-1} +
(c+1)\gamma^{-i-2}$ of $E^*$ which, as a real number, is smaller
than the point $a\gamma^{-i} + b\gamma^{-i-1} + c\gamma^{-i-2}$
represented by $q$. Thus the points of $E^*$ represented by quads in
$Q_\ud$ decrease (in absolute value) with the size of these quads.
\end{proof}


\subsection{Proof of Theorem \ref{thm:comb_bideg}}
 \label{subsec:4.5}
We prove it in the following form.

\begin{theorem}
 \label{thm:card_Euds}
Let $\ud=(d_1,d_2) \in \bN^2$.  For each integer $s\ge 0$, define
\[
 E_\ud(s) = \{ \alpha\in E_\ud \,;\, s_\ud(\alpha) = s \}.
\]
Then, for $s>d_1+d_2$, this set is empty while for $0\le s\le
d_1+d_2$ its cardinality is
\begin{equation}
 \label{formula:card_Euds}
 \big| E_\ud(s) \big|
  = 2\min\{d_1,\, d_2,\, s,\, d_1+d_2-s\} + 1.
\end{equation}
\end{theorem}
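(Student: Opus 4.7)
The plan is to translate the count into one of certain ``$\ud$-maximal'' quads and then enumerate them by the head index $i$. By the bi-degree part of Corollary \ref{cor:char_size}, for each $\alpha\in E_\ud\setminus\{0\}$ the integer $s_\ud(\alpha)$ is the size of the largest quad of bi-degree $\le\ud$ in $Q_\alpha$; by Proposition \ref{prop:Qalpha} this is equivalent to asking for the unique $q\in Q_\alpha$ satisfying $\ud(q)\le\ud$ and $\ud(\theta(q))\not\le\ud$. Since the sets $Q_\alpha$ are disjoint and no $Q_\alpha$ contains two quads of the same size, the assignment $\alpha\mapsto q$ yields a bijection
\[
 E_\ud(s)\setminus\{0\}\ \longleftrightarrow\
 \mathcal{M}_\ud(s):=\{q:\,s(q)=s,\ \ud(q)\le\ud,\ \ud(\theta(q))\not\le\ud\},
\]
with $\alpha=0$ contributing an extra unit only to $|E_\ud(0)|$. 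From \eqref{formulas:degree&bidegree} and $f(j)\ge 1$ for $j\ge 0$ one immediately gets $s(q)\le d_1(q)+d_2(q)\le d_1+d_2$ whenever $\ud(q)\le\ud$, so $\mathcal{M}_\ud(s)=\emptyset$ for $s>d_1+d_2$ and the formula is trivial there.

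For the remaining range $0\le s\le d_1+d_2$ I would enumerate $\mathcal{M}_\ud(s)$ by splitting on $i$ in $q=(i;a,b,c)$ and exploiting the increment formula $\ud(\theta(q))-\ud(q)=(2f(i-1),2f(i))$ from Proposition \ref{prop:Qalpha}. The stratum $i=0$ collapses via $f(-1)=0$ and $f(0)=1$ to the tractable conditions $a\ge 1$, $a+c\le d_1$, and $b+c\in\{d_2-1,d_2\}$, and a direct parametrisation produces exactly $2(d_1+d_2-s)+1$ quads for $d_2<s\le d_1+d_2$ (matching the formula) together with partial contributions for $s\le d_2$. For the strata $i\ge 1$, the cleanest organisation is through Proposition \ref{prop:Qd}: along the $\psi$-sequence of each $Q_{\ud'}$ with $\ud'\le\ud$ the head $i$ is non-decreasing, so once $\ud'+(2f(i_j-1),2f(i_j))\not\le\ud$ holds it keeps holding, and the $\ud$-maximal quads of $Q_{\ud'}$ form a suffix of this sequence.

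The main obstacle is the bookkeeping: one has to sum these suffix lengths over all $\ud'\le\ud$, regroup by size $s$, and verify the totals telescope into the closed form $2\min\{d_1,d_2,s,d_1+d_2-s\}+1$. The Cassini identity $f(i-2)f(i)-f(i-1)^2=(-1)^{i-1}$ is the natural tool here, letting one solve the bi-degree system $(d_1(q),d_2(q))=\ud'$ for $(a,c)$ given $(i,\ud')$. Useful cross-checks during the calculation are the symmetries $\chi_{d_1,d_2}(s)=\chi_{d_2,d_1}(s)$ and $\chi_{d_1,d_2}(s)=\chi_{d_1,d_2}(d_1+d_2-s)$ manifest in the target formula, together with the global identity $\sum_s|E_\ud(s)|=|E_\ud|=2d_1d_2+d_1+d_2+1$ given by Corollary \ref{cor:card_Ed}.
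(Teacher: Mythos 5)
Your setup is sound: the bijection between $E_\ud(s)\setminus\{0\}$ and the set $\mathcal{M}_\ud(s)$ of quads $q$ with $s(q)=s$, $\ud(q)\le\ud$ and $\ud(\theta(q))\not\le\ud$ follows correctly from Corollary \ref{cor:char_size} and Proposition \ref{prop:Qalpha}, the triviality of the case $s>d_1+d_2$ is established, and your handling of the $i=0$ stratum (including the observation, implicit in your write-up, that every quad of $\mathcal{M}_\ud(s)$ with head $i\ge 1$ satisfies $d_2(q)\ge s(q)$, so only $i=0$ contributes when $s>d_2$) is on the right track. This is a genuinely different framework from the paper's, which fixes $d_1$, runs an induction on $d_2$, and computes $|E_{d_1,d_2}(s)|-|E_{d_1,d_2-1}(s)|$ as a count of indices $i$ for which $Q_{i,d_2}$ has an element of size $s$ but none of size $s+2$; that recursion is what makes the paper's bookkeeping collapse to a short case distinction.

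The problem is that the core of the argument — the enumeration of the $i\ge 1$ strata for $s\le d_2$, the summation of suffix lengths over $\ud'\le\ud$, the regrouping by size, and the verification that everything telescopes to $2\min\{d_1,d_2,s,d_1+d_2-s\}+1$ — is not actually carried out, and you say as much (``The main obstacle is the bookkeeping''). This is not a detail to be filled in later: for $s\le d_2$ an unbounded number of head values $i$ can contribute, each interacting with the boundary conditions $\ud(q)\le\ud$, $\ud(\theta(q))\not\le\ud$ in a way controlled by the Fibonacci recursion, and that interaction is exactly what the theorem is about. Listing the Cassini identity and the symmetries $\chi_{d_1,d_2}(s)=\chi_{d_2,d_1}(s)$, $\chi_{d_1,d_2}(s)=\chi_{d_1,d_2}(d_1+d_2-s)$ as ``cross-checks'' does not replace the calculation, and neither symmetry is available before the formula is proved (they are read off from the answer, not from the definitions). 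As it stands the proposal reduces the problem correctly and solves a fragment of it, but leaves the substantive part open; to make it a proof you would either have to complete the direct enumeration or, more efficiently, switch to an inductive scheme in the spirit of the paper so that most of the bookkeeping cancels.
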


\begin{proof}
We fix a choice of $d_1\ge 0$ and prove the theorem by recurrence on
$d_2\ge 0$.  For $d_2=0$, we have $E_{d_1,0}=\{0,1,\dots,d_1\}$ and
$s_{d_1,0}(i)=i$ for $i=0,1,\dots,d_1$.  Therefore $E_{d_1,0}(s)$
has cardinality $1$ for $0\le s \le d_1$ and is empty for $s>d_1$,
as asserted by the theorem.  Suppose now that $d_2>0$ and that the
statement of the theorem holds in bi-degree $(d_1,d_2-1)$.

Fix an integer $s\ge 0$.  In order to establish the formula in
bi-degree $(d_1,d_2)$, we first compare the sets $E_{d_1,d_2}(s)$
and $E_{d_1,d_2-1}(s)$.

1) According to Corollary \ref{cor:char_size}, the points of
$E_{d_1,d_2}(s)$ which do not belong to $E_{d_1,d_2-1}(s)$ are the
elements $\alpha$ of $E$ for which the quad of $Q_\alpha$ of largest
bi-degree $\le (d_1,d_2)$ has size $s$ but does not have bi-degree
$\le (d_1,d_2-1)$.  They are therefore the points of $E$ which are
represented by an element of $Q_{i,d_2}$ of size $s$ for some
integer $i$ with $0\le i\le d_1$.  Since by Proposition
\ref{prop:Qalpha} the quads representing the same point have
distinct second partial degrees, and since by Proposition
\ref{prop:Qd} each $Q_{i,d_2}$ contains at most one element of size
$s$, we conclude that the cardinality of $E_{d_1,d_2}(s) \setminus
E_{d_1,d_2-1}(s)$ is the number of indices $i$ with $0\le i\le d_1$
such that $Q_{i,d_2}$ contains an element of size $s$.

2) According again to Corollary \ref{cor:char_size}, the points of
$E_{d_1,d_2-1}(s)$ which do not belong to $E_{d_1,d_2}(s)$ are the
points $\alpha$ of $E_{d_1,d_2-1}$ for which $Q_\alpha$ contains
both a quad of size $s$ and bi-degree $\le (d_1,d_2-1)$, and a quad
of size $s+1$ and bi-degree $(i,d_2)$ for some $i$ with $0\le i\le
d_1$.  The first condition however is redundant because if $\alpha
\in E_{d_1,d_2-1}$ is represented by a quad of size $s+1$ and
bi-degree $(i,d_2)$ with $0\le i\le d_1$, then as the second partial
degree increases with the size in $Q_\alpha$ while the first partial
degree does not decrease (by Proposition \ref{prop:Qalpha}), the
quad of $Q_\alpha$ with largest bi-degree $\le (d_1,d_2-1)$ must
have size $s$.  Thus, the set $E_{d_1,d_2-1}(s) \setminus
E_{d_1,d_2}(s)$ consists of the points of $E_{d_1,d_2-1}$ which are
represented by an element of $Q_{i,d_2}$ of size $s+1$ for some $i$
with $0\le i\le d_1$. Moreover, according to Proposition
\ref{prop:Qd}, for a given $i\in\bN$ all elements of $Q_{i,d_2}$
represent points of bi-degree $\le (i,d_2-1)$ except for the ones of
smallest or largest size.  Therefore, the cardinality of
$E_{d_1,d_2-1}(s) \setminus E_{d_1,d_2}(s)$ is the number of indices
$i$ with $0\le i\le d_1$ such that $Q_{i,d_2}$ contains an element
of size $s$ and an element of size $s+2$.

Combining the conclusions of 1) and 2), we obtain that the
cardinality of $E_{d_1,d_2}(s)$ is equal to that of
$E_{d_1,d_2-1}(s)$ plus the number of indices $i$ with $0\le i\le
d_1$ such that $Q_{i,d_2}$ contains an element of size $s$ but no
element of size $s+2$. Since, by Proposition \ref{prop:Qd}, the
largest size of an element of $Q_{i,d_2}$ is $i+d_2$, the latter
condition on $i$ amounts to either $i+d_2=s$ or both $i+d_2=s+1$ and
$i\neq 0$ (so that $Q_{i,d_2}$ contains at least two elements and
thus contains an element of size $s$). This provides the recurrence
relation
\[
 |E_{d_1,d_2}(s)|
  = |E_{d_1,d_2-1}(s)|
    +
    \begin{cases}
     0 &\text{if\, $s<d_2$ or $s>d_1+d_2$,}\\
     1 &\text{if\, $s=d_1+d_2$,}\\
     2 &\text{if\, $d_2\le s < d_1+d_2$.}
    \end{cases}
\]
Combining this with the induction hypothesis for
$|E_{d_1,d_2-1}(s)|$, we get $|E_{d_1,d_2}(s)|=0$ if $s>d_1+d_2$ and
$|E_{d_1,d_2}(s)|=1$ if $s=d_1+d_2$.  If $s< d_1+d_2$, it also
provides the required value \eqref{formula:card_Euds} for
$|E_{d_1,d_2}(s)|$ because the difference
\[
 \min\{d_1,\, d_2,\, s,\, d_1+d_2-s\}
  - \min\{d_1,\, d_2-1,\, s,\, d_1+d_2-1-s\}
\]
is $0$ if $\min\{d_1,\, s\} < \min\{d_2,\, d_1+d_2-s\}$ and $1$
otherwise. Since $\min\{d_2,\, d_1+d_2-s\} = d_2 - s + \min\{d_1,\,
s\}$, this difference is therefore $0$ if $s<d_2$ and is $1$ if
$d_2\le s< d_1+d_2$.
\end{proof}


\subsection{Proof of Theorem \ref{thm:comb_deg}}
 \label{subsec:4.6}
Similarly, we prove Theorem \ref{thm:comb_deg} in the following
form.

\begin{theorem}
 \label{thm:card_Eds}
Let $d \in \bN$.  For each integer $s\ge 0$, define
\[
 E_d(s) = \{ \alpha\in E_d \,;\, s_d(\alpha) = s \}.
\]
Then, for $s>d$, this set is empty while for $0\le s\le d$ its
cardinality is
\[
 \big| E_d(s) \big|
  = \begin{cases}
     2s+1 &\text{if $0\le s<d$,}\\
     d+1  &\text{if $s=d$.}
    \end{cases}
\]
\end{theorem}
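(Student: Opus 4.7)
The plan is to proceed by induction on $d$, in close parallel with the proof of Theorem~\ref{thm:card_Euds}. The base case $d=0$ is immediate, since $E_0 = \{0\}$ and $s_0(0) = 0$. For the inductive step, we would analyze the symmetric difference of $E_d(s)$ and $E_{d-1}(s)$ using Corollary~\ref{cor:char_size} (which identifies $s_d(\alpha)$ with the size of the quad of largest degree at most $d$ in $Q_\alpha$) together with Proposition~\ref{prop:Qalpha} (the structure of $Q_\alpha$ under iteration of $\theta$).

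The elements of $E_d(s) \setminus E_{d-1}(s)$ are exactly those $\alpha$ whose size-$s$ quad in $Q_\alpha$ has degree equal to $d$; as each quad represents a unique point, their number equals the total count $N(d,s)$ of quads of size $s$ and degree $d$. Dually, the elements of $E_{d-1}(s) \setminus E_d(s)$ are those $\alpha \in E_{d-1}$ whose size-$(s+1)$ quad has degree $d$; since $d(\alpha) \le d-1 < d$, this quad must be non-minimal in $Q_\alpha$, so their number equals the count $N^*(d,s+1)$ of non-minimal-size quads of size $s+1$ and degree $d$. Thus
\[
 |E_d(s)| - |E_{d-1}(s)| \;=\; N(d,s) - N^*(d,s+1).
\]

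For $s > d$ both sides vanish trivially. For $s=d$, direct enumeration (via the substitution $a = s-b-c$ and degree $= sf(i) + bf(i-1) + cf(i+1)$) gives $N(d,d) = d+1$, arising from the $d$ quads $(0;a,b,0)$ with $a+b=d$, $a\ge 1$, together with the single quad $(1;d,0,0)$; all of these are minimal-size, so $N^*(d,d+1)=0$ and the increment matches $d+1$. For $s=d-1$, an analogous enumeration yields $N(d,d-1)=d-1$ and $N^*(d,d)=0$, matching the expected $d-1$. The main obstacle lies in the range $0 \le s \le d-2$, where one must establish $N(d,s) = N^*(d,s+1)$ in order to obtain a vanishing increment. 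Applying $\theta^{-1}$, non-minimal quads of size $s+1$ biject with all quads of size $s$, but with a degree shift of $-2f(i+1)$ depending on the pre-image's $i$-coordinate; the required identity therefore reduces to
\[
 \sum_{i\ge 0} N_i(d,s) \;=\; \sum_{i\ge 0} N_i\big(d - 2f(i+1),\, s\big),
\]
where $N_i(d,s)$ counts quads with $i$-coordinate $i$, size $s$, and degree $d$. This identity reflects a delicate cancellation between the different levels $i$. I expect to establish it either by a level-by-level cancellation argument exploiting the Fibonacci recurrence $f(i+3) = f(i) + 2f(i+1)$, or alternatively by reducing the count of quads of fixed degree to the bi-degree counts already determined in Theorem~\ref{thm:card_Euds} via the partition $N(d,s) = \sum_{d_1+d_2=d} |\{q\in Q_{d_1,d_2} : s(q) = s\}|$ and Proposition~\ref{prop:Qd}.
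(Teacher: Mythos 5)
Your overall setup is sound and follows the same strategy as the paper: induction on $d$, comparison of $E_d(s)$ with $E_{d-1}(s)$ via Corollary~\ref{cor:char_size} and Proposition~\ref{prop:Qalpha}, reduction of the increment to $N(d,s)-N^*(d,s+1)$ (where the passage from counting points to counting quads uses that quads of equal size represent pairwise distinct points, which is exactly what Proposition~\ref{prop:Qalpha} gives), and your boundary computations $N(d,d)=d+1$, $N^*(d,d+1)=0$, $N(d,d-1)=d-1$, $N^*(d,d)=0$ are correct and produce precisely the increments the theorem requires. The genuine gap is the range $0\le s\le d-2$: the identity $N(d,s)=N^*(d,s+1)$ is the heart of the theorem, and you leave it as an expectation with two candidate strategies. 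The second of these is the one that works and is what the paper does: partition the quads of degree exactly $d$ into the sets $Q_{i,d-i}$, $0\le i\le d$, of fixed bi-degree. By Proposition~\ref{prop:Qd}, inside each $Q_{i,d-i}$ the sizes form a string of consecutive integers with maximum $d$, with at most one quad of each size; the two quads of extreme size represent points of bi-degree exactly $(i,d-i)$, hence of degree $d$, while the intermediate ones represent points of $E_{d-1}$ --- equivalently, a quad of degree $d$ is non-minimal in its $Q_\alpha$ precisely when its size is intermediate in its $Q_{i,d-i}$. Hence, for $s\le d-2$, a class $Q_{i,d-i}$ contains a quad of size $s$ if and only if it contains an intermediate quad of size $s+1$ (the maximal size being $d\ge s+2$), so the two counts cancel class by class and the increment vanishes.

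By contrast, your first suggested route, a level-by-level cancellation in the $i$-coordinate using the Fibonacci recurrence, cannot work as stated: the identity $\sum_{i\ge 0} N_i(d,s)=\sum_{i\ge 0} N_i(d-2f(i+1),s)$ holds only in the aggregate, not level by level. For instance, with $d=4$, $s=2$, the quads of size $2$ and degree $4$ are $(1;1,0,1)$ and $(2;2,0,0)$, at levels $i=1,2$, whereas the non-minimal quads of size $3$ and degree $4$ are $(0;1,1,1)$ and $(1;2,1,0)$, at levels $i=0,1$, with both $\theta$-preimages at level $0$; so any correct argument must regroup the quads --- by bi-degree as above, or by an equivalent device --- rather than match them through their $i$-coordinate. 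Until the identity for $s\le d-2$ is actually established along such lines, the proof is incomplete.
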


\begin{proof}
We proceed by recurrence on $d$.  For $d=0$, we have $E_0=\{0\}$ and
since $s_0(0)=0$, the theorem is verified in that case. Suppose now
that $d > 0$ and that the conclusion of the theorem holds in smaller
degree.

Fix an integer $s\ge 0$.  Arguing in a similar way as in the proof
of Theorem \ref{thm:card_Euds}, we find that:
\begin{itemize}
\item[1)] $E_d(s) \setminus E_{d-1}(s)$ consists of the
points of $E$ which are represented by a quad of $Q_{i,d-i}$ of size
$s$ for some integer $i$ with $0\le i\le d$; its cardinality is the
number of indices $i$ with $0\le i\le d$ such that $Q_{i,d-i}$
contains an element of size $s$;
\item[2)] $E_{d-1}(s) \setminus E_d(s)$ consists of the points of $E_{d-1}$
which are represented by an element of $Q_{i,d-i}$ of size $s+1$ for
some $i$ with $0\le i\le d$; its cardinality is the number of
indices $i$ with $0\le i\le d$ such that $Q_{i,d-i}$ contains an
element of size $s$ and an element of size $s+2$.
\end{itemize}
Thus, the cardinality of $E_d(s)$ is equal to that of $E_{d-1}(s)$
plus the number of indices $i$ with $0\le i\le d$ such that
$Q_{i,d-i}$ contains an element of size $s$ but no element of size
$s+2$. Since the largest size of an element of $Q_{i,d-i}$ is $d$,
the latter condition amounts to either $d=s$ or both $d=s+1$ and
$i\neq 0$. This gives the recurrence relation
\[
 |E_d(s)|
  = |E_{d-1}(s)|
    +
    \begin{cases}
     0 &\text{if\, $s<d-1$ or $s>d$,}\\
     d &\text{if\, $s=d-1$,}\\
     d+1 &\text{if\, $s=d$,}
    \end{cases}
\]
and from there the conclusion follows.
\end{proof}

%
%

\section{Application to a dimension estimate}
 \label{sec:applic}

The following result illustrates how the theory developed in
Sections \ref{sec:ring} and \ref{sec:comb} can be used to derive
dimension estimates of the type that one requires in the
construction of auxiliary polynomials.

\begin{theorem}
Let $d\in\bN^*$ and\/ $\delta\in\bR$ with $0<\delta\le \gamma d$.
Define $V_d(\delta)$ to be the set of sequences $\gA \in \QgX_{\le
d}$ satisfying $|\gA| \ll |\gX_0^{(0)}|^{\delta}$. Then,
$V_d(\delta)$ is a subspace of $\bQ[\gX^{(0)}, \gX^{(-1)}]_{\le d}$
and its dimension satisfies
\begin{equation}
 \label{applic:ineq_thm}
 c_1(d\delta)^{3/2}
 \le
 \dim_\bQ V_d(\delta)
 \le
 1+c_2(d\delta)^{3/2},
\end{equation}
for appropriate positive constants $c_1$ and $c_2$ depending only on
$\xi$.
\end{theorem}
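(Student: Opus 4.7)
The plan is to identify $V_d(\delta)$ explicitly via the basis of Theorem~\ref{thm:basis_gM}, reduce the dimension count to a weighted sum over $E_d$, and then estimate that sum by a Riemann-sum comparison.

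First, by Corollary~\ref{cor1:thm:basis} each non-zero $\gA \in \QgX_{\le d}$ satisfies
\[
 \gA \;\sim\; \theta^{m+n-s_d(\alpha')} A(\xi)\, \big(\gX_0^{(0)}\big)^m \big(\gX_0^{(-1)}\big)^n
\]
for a unique $\alpha' = m + n/\gamma \in E_d$ and some non-zero $A \in \bQ[T]$. Combined with the relation $|\gX_0^{(-1)}| \asymp |\gX_0^{(0)}|^{1/\gamma}$ coming from E1 (see \S\ref{subsec:2.5}), this gives $|\gA| \asymp |\gX_0^{(0)}|^{\alpha'}$, so that $|\gA| \ll |\gX_0^{(0)}|^\delta$ is equivalent to $\alpha' \le \delta$. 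Since $V_d(\delta)$ is manifestly a $\bQ$-subspace of $\QgX_{\le d}$, Theorem~\ref{thm:basis_gM} then shows that it admits the basis $\{\gM_{\alpha,j} : \alpha \in E_d,\ \alpha \le \delta,\ 0 \le j \le 2 s_d(\alpha)\}$, whence
\[
 \dim_\bQ V_d(\delta) \;=\; \sum_{\alpha \in E_d,\ \alpha \le \delta}\big(2 s_d(\alpha)+1\big).
\]

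The heart of the argument is a uniform estimate $s_d(\alpha) \asymp \sqrt{d\alpha}$ for $\alpha \in E_d \setminus \{0\}$, with implicit constants depending only on $\gamma$. Letting $q = (i;a,b,c)$ be the maximising quad provided by Corollary~\ref{cor:comp_size}, one has $\alpha = a\gamma^{-i}+b\gamma^{-i-1}+c\gamma^{-i-2}$ and $d - 2f(i+1) < d(q) = af(i)+bf(i+1)+cf(i+2) \le d$ with $s_d(\alpha) = a+b+c$. Bounding $f(k)$ and $\gamma^{-k}$ against their extreme values for $k \in \{i,i+1,i+2\}$ yields $d(q)\,\alpha \asymp s_d(\alpha)^2$. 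Combining this with $d(q) > d - 2f(i+1)$ and the relation $f(i+1) \asymp s_d(\alpha)/\alpha$ (coming from the same bounds) gives $s_d(\alpha)^2 + O(s_d(\alpha)) \asymp d\alpha$, from which the claim follows.

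Finally, by Corollary~\ref{cor:deg/bideg} the set $E_d \cap (0,\delta]$ is in bijection with the set $L$ of lattice points $(m,n) \in \bZ^2$ with $|m|+|n| \le d$ and $0 < m+n/\gamma \le \delta$. Inserting the size estimate gives
\[
 \dim_\bQ V_d(\delta) \;\asymp\; 1 \;+\; |L| \;+\; \sqrt{d}\,\sum_{(m,n)\in L}\sqrt{m + n/\gamma}.
\]
Under the unit-Jacobian change of variables $(u,v) = (m+n/\gamma,\, n)$, the region $L$ becomes $\{0 < u \le \delta,\ |v| \le Cd\}$ with $C$ depending only on $\gamma$, and a standard Riemann-sum comparison yields $|L| \asymp d\delta$ together with $\sum_L \sqrt{m+n/\gamma} \asymp d\int_0^\delta \sqrt{u}\,du \asymp d\,\delta^{3/2}$, valid once $d\delta$ exceeds a fixed universal constant. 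This gives $\dim_\bQ V_d(\delta) \asymp (d\delta)^{3/2}$ in that regime; in the complementary regime $d\delta = O(1)$, the lower bound follows from $\dim_\bQ V_d(\delta) \ge 1$ (contributed by $\gM_{0,0}$), and the upper bound from the trivial estimate $|L| = O(1)$, with the additive $1$ in \eqref{applic:ineq_thm} precisely accommodating the constant basis element. The main technical obstacle will be the size estimate $s_d(\alpha) \asymp \sqrt{d\alpha}$ uniform in $d$ and $\alpha$, which reconciles the discrete quad structure with the continuous scaling $\sqrt{d\alpha}$ and requires handling the boundary cases where $d\alpha$ is close to $1$.
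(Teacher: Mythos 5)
Your first two steps are sound: the identification $\dim_\bQ V_d(\delta)=\sum_{\alpha\in E_d,\ \alpha\le\delta}\big(2s_d(\alpha)+1\big)$ and the uniform estimate $s_d(\alpha)\asymp\sqrt{d\alpha}$ are both correct, and the latter is essentially what the paper extracts from Corollary \ref{cor:comp_size} through the bounds $d/(2f(i+2))\le a+b+c\le d/f(i)$. The genuine gap is in your final counting step. For $0<\delta<1$ (a regime the theorem explicitly allows, and the delicate one when $d\delta$ is large), the set $E_d\cap(0,\delta]$ consists of lattice points $(m,n)$, $|m|+|n|\le d$, lying in a strip of width $\delta$ smaller than the lattice spacing, and no ``standard Riemann-sum comparison'' controls such a count by the area $d\delta$: the generic lattice-point estimate for a convex region is area plus an error of the order of the perimeter, here $O(d)$, which swamps $d\delta$; column-by-column counting likewise only gives $|L|\le(2d+1)(\delta+1)$, of order $d$ rather than $d\delta$. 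Conversely, the lower bound $|L|\gg d\delta$ requires knowing that the fractional parts of $n/\gamma$ for $|n|\le d$ are spread out with gaps $\asymp 1/d$ --- a Diophantine property of $\gamma$ (bounded partial quotients, three-distance theorem), not a volume estimate. The same hidden input sits in your ``trivial estimate $|L|=O(1)$'' when $d\delta=O(1)$: that amounts to the non-trivial fact that the elements of $E_d$ in $(0,1)$ are $\gg 1/d$-separated, so that only boundedly many lie below $C/d$. (Your description of the sheared region as a rectangle $\{0<u\le\delta,\ |v|\le Cd\}$ is also inaccurate, since the cross-sections shrink as $u$ approaches $d$, but that part is minor and fixable.)

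The paper avoids this difficulty by never summing over $\alpha$ directly: it sums over the index $i$ of the quad furnished by Corollary \ref{cor:comp_size}. For each $i$ with $f(i)\le d$, the number of admissible triples $(a,b,c)$ is $\asymp d^2\gamma^{-2i}$; the constraint $\alpha\le\delta$ forces $d\le 2\gamma^{2i+4}\delta$ and is conversely guaranteed once $d\le\gamma^{2i-1}\delta$; geometric series in $\gamma^{-i}$ then yield both sides of \eqref{applic:ineq_thm}. This decomposition is exactly what encodes the distribution of $E_d$ near $0$ that your Riemann-sum step tacitly assumes. To repair your route you would need either to adopt that decomposition over $i$, or to justify the strip counts (upper and lower) by an explicit appeal to the bounded partial quotients of $\gamma$; as written, the last step does not go through in the regime $\delta<1\le d\delta$.
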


\begin{proof}
It is clear that $V_d(\delta)$ is a vector space over $\bQ$.
Combining Lemma \ref{lemma:comb_gM} with Theorem \ref{thm:basis_gM}
and then using Corollary \ref{cor:comp_size} we find that his
dimension is
\[
 \dim_\bQ V_d(\delta)
 = \sum_{\{\alpha\in E_d\,;\, \alpha\le\delta\}}
   (2s_d(\alpha)+1)
 = 1 + \sum_S (2(a+b+c)+1),
\]
where the rightmost sum runs over the set $S$ of all quads
$(i\,;\,a,b,c)$ satisfying the system of inequalities
\begin{gather}
 \label{applic:ineq1}
 d-2f(i+1) < af(i) + bf(i+1) + cf(i+2) \le d, \\
 \label{applic:ineq2}
 a\gamma^{-i} + b\gamma^{-i-1} + c\gamma^{-i-2} \le \delta.
\end{gather}
For each $i\in\bN$, let $S_i$ denote the set of triples
$(a,b,c)\in\bN^3$ satisfying both $a\ge 1$ and the first condition
\eqref{applic:ineq1}.  In the computations below, we freely use the
fact that $\gamma^{i-1}\le f(i) \le \gamma^i$ for each $i\in\bN$ (as
one easily shows by recurrence on $i$).

Let $i\in\bN$.  We first note that $S_i$ is empty if $f(i)>d$ (since
we require $a\ge 1$).  Assume that $f(i)\le d$.  Then each
$(a,b,c)\in S_i$ satisfies $d/(2f(i+2)) \le a+b+c\le d/f(i)$ and so
\begin{equation*}
 \begin{aligned}
 \frac{d}{\gamma^{i+2}} \le \frac{d}{f(i+2)}
 &\le 2(a+b+c)+1
 \le \frac{2d}{f(i)}+1 \le \frac{3d}{f(i)}
 \le \frac{3d}{\gamma^{i-1}},\\
 \frac{d}{2\gamma^{2i+4}} \le \frac{d}{2f(i+2)}\gamma^{-i-2}
 &\le a\gamma^{-i} + b\gamma^{-i-1} + c\gamma^{-i-2}
 \le \frac{d}{f(i)}\gamma^{-i}
 \le \frac{d}{\gamma^{2i-1}}.
 \end{aligned}
\end{equation*}
The second chain of inequalities implies that if some $(a,b,c)\in
S_i$ satisfies \eqref{applic:ineq2}, then we must have $d\le
2\gamma^{2i+4}\delta$.  On the other hand, it also implies that any
$(a,b,c)\in S_i$ satisfies \eqref{applic:ineq2} as soon as $d\le
\gamma^{2i-1}\delta$.  Therefore, we obtain
\[
 1 + \sum_{i\in J} |S_i| \frac{d}{\gamma^{i+2}}
 \le
 \dim_\bQ V_d(\delta)
 \le
 1 + \sum_{i\in I} |S_i| \frac{3d}{\gamma^{i-1}}
\]
where $I$ denotes the set of integers $i\ge 0$ such that $f(i)\le d$
and $d \le 2\gamma^{2i+4}\delta$, and $J$ the set of integers $i\ge
0$ such that $f(i)\le d$ and $d\le\gamma^{2i-1}\delta$.

Again, let $i\in\bN$.  For each choice of integers $a\ge 1$ and
$c\ge 0$ with $af(i)+cf(i+2)\le d$, there are exactly two choices of
integer $b\ge 0$ such that $(a,b,c)$ satisfies \eqref{applic:ineq1},
or equivalently such that $(a,b,c)\in S_i$.  This means that the
cardinality $|S_i|$ of $S_i$ is twice the number of points
$(a,c)\in\bN^*\times \bN$ satisfying $af(i)+cf(i+2)\le d$.  Since
$f(i)\le d$ for each $i$ in $I$ or $J$, this number is non-zero and
a short computation provides absolute constants $c_3>0$ and $c_4>0$
such that $|S_i| \le c_3 d^2 \gamma^{-2i}$ for each $i\in I$ and
$|S_i| \ge c_4 d^2 \gamma^{-2i}$ for each $i\in J$.

If $I$ is not empty, it contains a smallest element $i_0$ and so the
above considerations give
\[
 \dim_\bQ V_d(\delta)
 \le 1 + 3 c_3 d^3 \sum_{i=i_0}^\infty \gamma^{-3i+1}
 \le 1 + 4 \gamma c_3 (d\gamma^{-i_0})^3.
\]
Since $d\le 2 \gamma^{2i_0+4}\delta$, we find that $d\gamma^{-i_0}
\le (2\gamma^4 d\delta)^{1/2}$ and so $\dim_\bQ V_d(\delta) \le 1 +
c_2 (d\delta)^{3/2}$ with $c_2 = 12\gamma^7 c_3$.  If $I$ is empty,
$V_d(\delta)$ has dimension $1$ and this inequality still holds.

Let $j_0$ denote the integer for which $\gamma^{2j_0-3}\delta < d
\le \gamma^{2j_0-1}\delta$.  Since $\delta \le \gamma d$, we have
$j_0\ge 0$, and thus $j_0$ belongs to $J$ if and only if $f(j_0) \le
d$. In that case, using $\gamma^{2j_0-3}\delta < d$, we find that
$d\gamma^{-j_0} \ge \gamma^{-3/2} (d\delta)^{1/2}$ and so
\[
 \dim_\bQ V_d(\delta)
 \ge |Q_{j_0}| \frac{d}{\gamma^{j_0+2}}
 \ge \gamma^{-2} c_4 (d\gamma^{-j_0})^3
 \ge \gamma^{-7} c_4 (d\delta)^{3/2}.
\]
If $j_0\notin J$, then we have $d < f(j_0) \le \gamma^{j_0}$, thus
$d\delta \le \gamma^{j_0}\delta \le \gamma^{-j_0+3} d \le \gamma^3$
and therefore
\[
 \dim_\bQ V_d(\delta)
 \ge
 1
 \ge
 \gamma^{-9/2} (d\delta)^{3/2},
\]
showing that the lower bound in \eqref{applic:ineq_thm} holds with
$c_1 = \min\{ \gamma^{-7} c_4,\, \gamma^{-9/2}\}$.
\end{proof}

%
%

\section{A complementary result}
 \label{sec:complement}

Following a suggestion of Daniel Daigle, we show:

\begin{theorem}
 \label{thm:prime}
The ideal $I$ defined in Lemma \ref{lemma:I} is a prime ideal of
rank $3$ of the ring $R=\QX$.
\end{theorem}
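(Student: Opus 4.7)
The plan is to establish primeness and the rank of $I$ separately.

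For primeness, I would combine two corollaries proved earlier in the paper. Corollary~\ref{cor:ker_pi} identifies $R/I$ with $\QgX$ via the evaluation map $\pi$, and Corollary~\ref{cor2:thm:basis} supplies a rank two valuation $\gA \mapsto \alpha$ on $\QgX$ with values in $\bZ[\gamma]$. Since any ring admitting a valuation defined on its non-zero elements and satisfying $v(\gA\gB) = v(\gA)+v(\gB)$ is necessarily an integral domain, $R/I \cong \QgX$ is a domain, and hence $I$ is prime.

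For the rank, Krull's height theorem gives $\operatorname{rank}(I) \le 3$ since $I$ is generated by three elements. For the reverse inequality, I would pass to the graded setup of Lemma~\ref{lemma:reg_seq_I1}. That lemma shows that the three generators of $I_1 \subset R_2 := \bQ[\uX,\uX^*,U]$ form a regular sequence, so $R_2/I_1$ is a standard graded $\bQ$-algebra of Krull dimension $7-3 = 4$. The same proof also exhibits $U,\,\det(\uX)-U^2,\,\det(\uX^*)-U^2,\,\Phi$ as a regular sequence in $R_2$; by permutation of homogeneous regular sequences, $U$ is a non-zero-divisor modulo $I_1$. Consequently, the graded localization $(R_2/I_1)[U^{-1}]$ retains Krull dimension $4$ and takes the form $A[U,U^{-1}]$, where $A := \bigl((R_2/I_1)[U^{-1}]\bigr)_0$; hence $\dim A = 3$. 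The dehomogenization $U \mapsto 1$ identifies $A$ with $R/I$, so $\dim(R/I) = 3$. Finally, since $R$ is a polynomial ring over a field, hence a catenary domain of Krull dimension $6$, the prime $I$ satisfies $\operatorname{rank}(I) = \dim R - \dim(R/I) = 3$.

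The main obstacle is the careful handling of the dehomogenization step, which relies crucially on $U$ being a non-zero-divisor modulo $I_1$ (so that dimension is preserved under localization) and on the graded ring structure (to drop dimension by exactly one in passing to the degree-zero part). An alternative route would be to invoke Theorem~\ref{thm:basis_gM} and observe that $\dim_\bQ(R/I)_{\le d} = (4d^3+6d^2+8d+3)/3$ is a cubic polynomial in $d$, so that the associated graded ring of the filtration has Krull dimension $3$, whence so does $R/I$ itself.
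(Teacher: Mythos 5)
Your argument is logically valid but travels in the opposite direction to the paper's, and in doing so it forfeits the very point of Theorem~\ref{thm:prime}. The paper announces at the start of Section~\ref{sec:complement} that this theorem ``provides a proof of Corollary~\ref{cor:ker_pi} which is independent of the combinatorial arguments of Section~\ref{sec:comb}'': that is, Theorem~\ref{thm:prime} is established first by purely commutative-algebraic means, and Corollary~\ref{cor:ker_pi} is then deduced from it. You do the reverse. Both of the ingredients you invoke --- the identification $R/I\cong\QgX$ in Corollary~\ref{cor:ker_pi} and the valuation of Corollary~\ref{cor2:thm:basis} --- ultimately rest on Theorem~\ref{thm:basis_gM}, hence on the counting theorems of Section~\ref{sec:comb}. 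So while your proof is not circular (those corollaries are legitimately established earlier), it is not an ``alternative proof'' in the sense the paper intends. The paper's actual argument never mentions $\QgX$: it homogenizes $I$ to a multi-homogeneous ideal $I_3$ in $R_3=R[V,V^*,W]$, verifies that the generators form a regular sequence and that $X_0,X_0^*$ are non-zero-divisors modulo $I_3$, localizes at the multiplicative set generated by $X_0,X_0^*$ to eliminate $X_2$ and $X_2^*$, and reduces primeness to the irreducibility of a single explicit polynomial $H$, with rank~$3$ falling out of the regular sequence of length~$3$. That is a self-contained computation in $R_3$, with no analysis and no combinatorics.

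Two smaller technical remarks on your rank computation. First, the step ``$(R_2/I_1)[U^{-1}]$ retains Krull dimension $4$'' is not automatic from $U$ being a non-zero-divisor; you need to know that $R_2/I_1$ is equidimensional so that no top-dimensional component is lost on inverting $U$. This does hold --- a quotient of a polynomial ring by a regular sequence is Cohen--Macaulay, hence unmixed --- but the point deserves to be stated, since for a general graded Noetherian ring inverting a homogeneous non-zero-divisor of positive degree can drop the dimension. Second, for the alternative route via the cubic growth of $\dim_\bQ(R/I)_{\le d}$, the conclusion ``the associated graded ring has Krull dimension $3$, whence so does $R/I$'' is correct but again relies on $\dim R/I = \dim \mathrm{gr}(R/I)$ for the degree filtration, which should be cited rather than asserted. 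Both routes are repairable; neither is what the paper does.
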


This provides a proof of Corollary \ref{cor:ker_pi} which is
independent of the combinatorial arguments of Section
\ref{sec:comb}.  Indeed, it follows easily from the considerations
of \S\S\ref{subsec:2.4}--\ref{subsec:2.5} that $\gX_0^{(0)}$,
$\gX_0^{(-1)}$ and $\gX_1^{(-1)}$ are elements of $\QgX$ which are
algebraically independent over $\bQ$.  On the other hand, the
evaluation map $\pi\colon R \to \QgX$ defined by \eqref{def:pi}
induces a surjective ring homomorphism $\bar{\pi}\colon R/I \to
\QgX$ and, if we take for granted Theorem \ref{thm:prime}, the
quotient $R/I$ is an integral domain of transcendence degree $3$
over $\bQ$. Therefore, as Daniel Daigle remarked, this means that
$\bar{\pi}$ is an isomorphism.  This not only proves Corollary
\ref{cor:ker_pi} but also:

\begin{corollary}
The ring $\QgX$ is an integral domain of transcendence degree $3$
over $\bQ$.
\end{corollary}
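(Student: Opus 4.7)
The plan is to separate the two assertions: rank $=3$ follows by specialization from the regular sequence already established, while primeness is proved by a localization to a unique factorization domain.

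For the \emph{rank}, Lemma \ref{lemma:reg_seq_I1} shows that $\det(\uX)-U^2$, $\det(\uX^*)-U^2$, $\Phi(\uX,\uX^*)$ form a regular sequence in $\bQ[\uX,\uX^*,U]$. Since the image of $U-1$ in $\bQ[\uX,\uX^*,U]/I_1$ is a non-zero-divisor (by a direct degree argument), setting $U=1$ preserves regularity, so the three generators of $I$ listed in Lemma \ref{lemma:I} form a regular sequence in $R$. Hence $I$ has rank exactly $3$, and $R/I$ is Cohen--Macaulay; in particular every associated prime of $I$ is minimal of rank $3$.

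For \emph{primeness}, I would proceed in three steps. (i) The quotient $R/(\det(\uX)-1,\det(\uX^*)-1) \cong A\otimes_\bQ B$, with $A:=\bQ[\uX]/(\det(\uX)-1)$ and $B$ analogous, is a domain because $X_0X_2-X_1^2-1$ is geometrically irreducible (a smooth affine quadric), so $A$ is geometrically integral over $\bQ$ and the tensor product remains a domain. (ii) Localizing at $X_0 X_0^*$ and using the substitutions $X_2=(1+X_1^2)/X_0$ and $X_2^*=(1+(X_1^*)^2)/X_0^*$ identifies this localization with the Laurent polynomial ring $L:=\bQ[X_0^{\pm1},X_1,(X_0^*)^{\pm1},X_1^*]$, which is a UFD. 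Let $\tilde\Phi\in\bQ[X_0,X_1,X_0^*,X_1^*]$ denote the polynomial obtained by multiplying the image of $\Phi$ in $L$ by $X_0 X_0^*$; showing $\tilde\Phi$ is irreducible in $L$ yields that $I\cdot R[(X_0 X_0^*)^{-1}]$ is a prime ideal of height $3$. (iii) To transfer primeness to $R/I$ itself, it suffices to check that no minimal prime of $R/I$ contains $X_0 X_0^*$, equivalently that $V(I,X_0)$ and $V(I,X_0^*)$ have dimension $<3$ over $\overline\bQ$. Setting $X_0=0$ forces $X_1^2=-1$, so $X_1$ is discrete; $X_2$ is then free, and the remaining conditions $\det(\uX^*)=1$ and $\Phi=0$ cut out a curve in the $\uX^*$-variables (their independence follows from $\det(M)=1$), giving $\dim V(I,X_0)\le 2$, and symmetrically for $V(I,X_0^*)$. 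Combined with (ii), this yields that $R/I$ embeds in a domain, hence is a domain.

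The \emph{main obstacle} is the irreducibility claim in step (ii). A direct computation shows
\begin{equation*}
 \tilde\Phi\big|_{X_0=0} = (1+X_1^2)\,X_0^*\,(a_{2,1}X_0^* + a_{2,2}X_1^*), \qquad
 \tilde\Phi\big|_{X_0^*=0} = -(1+(X_1^*)^2)\,X_0\,(a_{1,2}X_0 + a_{2,2}X_1),
\end{equation*}
both nonzero because $\det(M)=1$ forbids $(a_{2,1},a_{2,2})=(0,0)$ or $(a_{1,2},a_{2,2})=(0,0)$; in particular $X_0\nmid\tilde\Phi$ and $X_0^*\nmid\tilde\Phi$. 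Moreover, the homogeneous components of $\tilde\Phi$ of degrees $3$ and $2$ in the pair $(X_1,X_1^*)$ factor respectively as $a_{2,2}\,X_1X_1^*(X_0^*X_1-X_0X_1^*)$ and $(X_0^*X_1-X_0X_1^*)(a_{2,1}X_0^*X_1+a_{1,2}X_0X_1^*)$. A finite case analysis enumerates the possible bi-degree shapes $(p_1,p_2)+(q_1,q_2)$ of a hypothetical factorization $\tilde\Phi=PQ$ in $(X_1,X_1^*)$, and matches the resulting leading forms of $P,Q$ against the above specializations of $\tilde\Phi$ modulo $X_0$ and modulo $X_0^*$; the hypothesis $M\neq\pm\tM$ is invoked precisely here to rule out the remaining borderline configurations. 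This combinatorial bookkeeping is the technical heart of the argument.
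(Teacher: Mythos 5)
Your strategy is the same as the paper's in broad outline: localize $R/I$ at the multiplicative set $S$ generated by $X_0$ and $X_0^*$, use the first two generators of $I$ to eliminate $X_2$ and $X_2^*$, reduce primeness of the localization to irreducibility of a single cleared-denominator polynomial, and separately verify that the map $R/I \to S^{-1}(R/I)$ is injective. Your step (iii) handles the last point by bounding $\dim V(I,X_0)$ and $\dim V(I,X_0^*)$ below $3$ over $\overline{\bQ}$, whereas the paper instead shows (Lemma \ref{lemma:reg_seqVW}) that $X_0, X_0^*, F, F^*, G$ form a regular sequence in a thickened ring $R_3=R[V,V^*,W]$. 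Both are legitimate, and your dimension count is sound once one records, as you do, that $R/I$ is Cohen--Macaulay and hence equidimensional with no embedded primes.

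The genuine gap is the irreducibility claim in step (ii). The whole point of the paper's auxiliary variables $V, V^*, W$ is to make the substitution $X_2 \mapsto (X_1^2+V^2)/X_0$ \emph{homogeneous of the same multi-degree as $X_2$}, so that the cleared-denominator polynomial $H$ is multi-homogeneous of multi-degree $(2,2,3)$. Every factor of $H$ is then itself multi-homogeneous, and since the constant $W$-coefficient $H_0$ is irreducible of the full multi-degree, any factorization is forced to be trivial. Your $\tilde\Phi$ is \emph{not} bi-homogeneous: it carries terms of bidegree $(2,2)$, $(2,0)$ and $(0,2)$ in $(\uX,\uX^*)$. Your sketch extracts the $(X_1,X_1^*)$-degree-$3$ and degree-$2$ pieces and factors them, but a hypothetical factorization $\tilde\Phi=PQ$ with inhomogeneous $P,Q$ produces cross-terms at every intermediate degree, so the graded pieces of $\tilde\Phi$ are not simply products of graded pieces of $P$ and $Q$. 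The ``finite case analysis'' you defer is therefore substantially more delicate than the corresponding step in the paper's proof, and your outline does not make clear how to close it; I would adopt the homogenization device of $R_3$, which collapses this step. One further remark: from ``$I$ is prime of rank $3$'' you still need to argue that the surjection $R/I \twoheadrightarrow \QgX$ is injective in order to reach the stated corollary; the paper does this by observing that $\gX_0^{(0)}$, $\gX_0^{(-1)}$, $\gX_1^{(-1)}$ are algebraically independent over $\bQ$, so $\QgX$ has transcendence degree at least $3$, matching that of the domain $R/I$.
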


In order to prove Theorem \ref{thm:prime}, we first note that we may
assume, without loss of generality, that the coefficient $a_{2,2}$
of the matrix $M$ is non-zero.  Indeed, as we saw at the end of
\S\ref{subsec:2.2}, at least one of the coefficients $a_{1,1}$ or
$a_{2,2}$ of $M$ is non-zero.  If $a_{2,2}=0$, then $a_{1,1}\neq 0$
and we replace $I$ by its image under the ring automorphism of $R$
which sends $X_i$ to $X_{2-i}$ and $X^*_i$ to $X^*_{2-i}$ for
$i=0,1,2$.  This automorphism fixes the first two generators
$\det(\uX)-1$ and $\det(\uX^*)-1$ of $I$ and maps $\Phi(\uX,\uX^*)$
to a polynomial of the same form with the coefficient $a_{2,2}$
replaced by $-a_{1,1}\neq 0$.

Now, let $V$, $V^*$ and $W$ be indeterminates over $R$.  We put a
$\bN^3$-grading on the ring $R_3 := R[V,V^*,W]$ by requesting that
each variable is multi-homogeneous with multi-degree:
\[
 \begin{gathered}
 \deg(X_i)=(1,0,i),\quad \deg(X_i^*)=(0,1,i) \quad \text{for $i=0,1,2$,}\\
 \deg(V)=(1,0,1),\quad \deg(V^*)=(0,1,1) \et \deg(W)=(0,0,1).
 \end{gathered}
\]
A polynomial $P$ in $R_3$ is thus multi-homogeneous of multi-degree
$(d_1,d_2,d_3)$ if and only if, in the usual sense, it is
homogeneous of degree $d_1$ in $(\uX,V)$, homogeneous of degree
$d_2$ in $(\uX^*,V^*)$, and if its image under the specialization
$X_i\mapsto W^iX_i$, $X_i^*\mapsto W^iX^*_i$ ($i=0,1,2$), $V\mapsto
WV$ and $V^*\mapsto WV^*$ belongs to $W^{d_3}R[V,V^*]$. In this
case, $d_3$ is called the \emph{weight} of~$P$.

Let $I_3$ denote the ideal of $R_3$ generated by
\[
 \begin{aligned}
 F &= \det(\uX) - V^2,\\
 F^* &= \det(\uX^*)- (V^*)^2, \\
 G &=
    a_{1,1} \left| \begin{matrix}
            X^*_0 &X^*_1 \\ X_0 &X_1
            \end{matrix}
     \right| W^2
 + \left(
   a_{1,2} \left| \begin{matrix}
            X^*_1 &X^*_2 \\ X_0 &X_1
            \end{matrix}
     \right|
 + a_{2,1} \left| \begin{matrix}
            X^*_0 &X^*_1 \\ X_1 &X_2
            \end{matrix}
     \right|
   \right) W
 + a_{2,2} \left| \begin{matrix}
            X^*_1 &X^*_2 \\ X_1 &X_2
            \end{matrix}
     \right|.
 \end{aligned}
\]
Since $F$, $F^*$ and $G$ are respectively multi-homogeneous of
multi-degree $(2,0,2)$, $(0,2,2)$ and $(1,1,3)$, the ideal $I_3$ is
multi-homogeneous. By construction, it is mapped to $I$ under the
$R$-linear ring homomorphism from $R_3$ to $R$ sending $V$, $V^*$
and $W$ to $1$. Moreover, any element of $I$ is the image of a
multi-homogeneous element of $I_3$ under that map.  Therefore, in
order to prove Theorem \ref{thm:prime}, it suffices to show that
$I_3$ is a prime ideal of $R_3$.  In preparation to this, we first
establish the following lemma where, for any $f\in R_3$, we define
$(I_3\colon f) = \{a\in R_3\,;\, af\in I_3 \}$.

\begin{lemma}
 \label{lemma:reg_seqVW}
The polynomials $F$, $F^*$ and $G$ form a regular sequence in $R_3$
and we have $(I_3\colon X_0) = (I_3\colon X^*_0) = I_3$.
\end{lemma}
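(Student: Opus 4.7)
My plan is to construct explicit regular sequences step by step, using at each stage that the next polynomial is nonzero in a quotient ring that turns out to be an integral domain. To show $F,F^*,G$ is a regular sequence in $R_3$, I would proceed as follows. First, $F=\det(\uX)-V^2$ is absolutely irreducible (it defines a smooth rank-$4$ projective quadric, hence a geometrically irreducible hypersurface), so $R_3/(F)$ is a geometrically integral domain; $F^*$ is also absolutely irreducible and involves a disjoint set of variables, so it remains a nonzerodivisor in $R_3/(F)$, yielding another integral domain $R_3/(F,F^*)$. To verify $G$ is a nonzerodivisor there, I would check $G\notin (F,F^*)$ by a multi-grading obstruction: $G$ is multi-homogeneous of tri-degree $(1,1,3)$ whereas $F$ and $F^*$ are multi-homogeneous of tri-degrees $(2,0,2)$ and $(0,2,2)$, so any representation $G=pF+qF^*$ would force $p$ and $q$ to have homogeneous components of tri-degrees $(-1,1,1)$ and $(1,-1,1)$ respectively, which is impossible since $R_3$ has no monomials of negative multi-degree.

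For the colon-ideal equalities, the first assertion just proved makes $R_3/I_3$ Cohen--Macaulay, so $I_3$ is unmixed and each of its minimal primes has height exactly $3$. Hence $X_0$ is a nonzerodivisor modulo $I_3$ if and only if $(I_3,X_0)$ has height $4$, which is equivalent to $F,F^*,X_0,G$ being a regular sequence. The first three terms are handled as above, since $X_0$ is a nonzero element of the domain $R_3/(F,F^*)$ and is therefore a nonzerodivisor there. The remaining verification is that $G$ is a nonzerodivisor modulo $(X_0,F,F^*)$.

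For that last step, I would pass to the quotient $R_3/(X_0,F,F^*)\cong\bQ[X_1,X_2,\uX^*,V,V^*,W]/(X_1^2+V^2,F^*)$. Although $X_1^2+V^2$ is irreducible over $\bQ$ but splits as $(X_1+iV)(X_1-iV)$ over $\bQ(i)$, this quotient is still an integral domain: since $F^*$ is absolutely irreducible over $\bQ$, the ring $\bQ[\uX^*,V^*]/(F^*)\otimes_\bQ K$ is a domain for every field extension $K/\bQ$; applying this with $K=\mathrm{Frac}\bigl(\bQ[X_1,V]/(X_1^2+V^2)\bigr)$ and using the flatness of $\bQ[\uX^*,V^*]/(F^*)$ over $\bQ$ embeds the full tensor product into a domain. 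The same multi-degree obstruction then yields $G|_{X_0=0}\notin(X_1^2+V^2,F^*)$, completing the regularity check. The proof of $(I_3:X_0^*)=I_3$ is entirely symmetric, reducing to showing that $\bQ[\uX,X_1^*,X_2^*,V,V^*,W]/(F,(X_1^*)^2+(V^*)^2)$ is a domain---now using absolute irreducibility of $F$ to compensate for $(X_1^*)^2+(V^*)^2$ being only integral---and that $G|_{X_0^*=0}$ does not lie in this ideal. The principal technical subtlety throughout is preserving the integral-domain property of the intermediate quotients in the presence of the non-absolutely-irreducible quadratic forms, resolved in each case by the asymmetry that only one of the relevant pair of quadratic forms fails to be geometrically integral.
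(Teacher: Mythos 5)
Your proof is correct but takes a substantially different route from the paper's. The paper's argument is a two-liner: each member of the extended sequence $X_0, X_0^*, F, F^*, G$ involves a variable ($V$, $V^*$, $W$ in turn) on which the preceding ones do not depend, and from this it asserts that the ideal they generate has rank $5$, hence that the sequence is regular; since all five polynomials are homogeneous, every permutation is again regular, which yields both that $F, F^*, G$ is regular and that $X_0$, $X_0^*$ are nonzerodivisors modulo $I_3$ in one stroke. You instead work through explicit integral-domain quotients for the regularity of $F, F^*, G$, then convert the colon-ideal claims into a height computation via the Cohen--Macaulay/unmixedness property, and verify separately that $F, F^*, X_0, G$ and $F, F^*, X_0^*, G$ are regular. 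A real advantage of your decomposition is that it avoids the ring $R_3/(X_0, X_0^*, F, F^*)$, which is \emph{not} a domain (over $\overline{\bQ}$ each of $X_1^2+V^2$ and $(X_1^*)^2+(V^*)^2$ splits into two lines, producing two Galois orbits of components); by killing only one of $X_0$, $X_0^*$ at a time, exactly one residual quadratic form stays absolutely irreducible, and you correctly exploit that asymmetry to preserve integrality at every stage. The paper's rank-$5$ assertion, while true here, would require precisely this kind of fiber-by-fiber verification to be rigorous (a fresh variable in each polynomial does not in general force the expected height), so your longer version is the more airtight one. One minor imprecision on your end: $F$, regarded in $R_3$, cuts out a quadric cone in $\bP^8$ rather than a smooth quadric, but your conclusion that it is geometrically irreducible is still correct because the underlying quadratic form has rank $4 \ge 3$.
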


\begin{proof}
Each polynomial in the sequence $X_0$, $X^*_0$, $F$, $F^*$, $G$
depends on a variable on which the preceding polynomials do not
depend ($V$ for $F$, $V^*$ for $F^*$ and $W$ for $G$).  Therefore,
this sequence generates an ideal of $R_3$ of rank $5$, and so it is
a regular sequence.  Since these polynomials are homogeneous, any
reordering of this sequence remains a regular sequence, and the
conclusion follows.
\end{proof}

We now complete the proof of Theorem \ref{thm:prime} by showing:

\begin{lemma}
The ideal $I_3$ is prime of rank $3$.
\end{lemma}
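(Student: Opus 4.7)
Rank $3$ is immediate from Lemma~\ref{lemma:reg_seqVW}: since $F, F^*, G$ form a regular sequence in the Cohen--Macaulay polynomial ring $R_3$, they generate an ideal of rank exactly $3$. Only primality remains. The same lemma shows that $X_0$ and $X_0^*$ are nonzerodivisors on $R_3/I_3$, so $R_3/I_3$ embeds in its localization at $X_0 X_0^*$, and it suffices to show that this localization is an integral domain.

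In that localization, $F = 0$ and $F^* = 0$ uniquely solve to $X_2 = (X_1^2 + V^2)/X_0$ and $X_2^* = ((X_1^*)^2 + (V^*)^2)/X_0^*$, yielding an isomorphism
\[
(R_3/(F, F^*))\big[(X_0 X_0^*)^{-1}\big] \;\xrightarrow{\sim}\; A[W],
\qquad
A := \bQ[X_0^{\pm 1}, X_1, V, (X_0^*)^{\pm 1}, X_1^*, V^*],
\]
sending $G$ to an element $\tilde G\in A[W]$ of degree at most $2$ in $W$. Since $A[W]$ is a UFD, primality reduces to showing $\tilde G$ is irreducible. Clearing denominators by multiplying $\tilde G$ by the unit $X_0 X_0^*$ produces a polynomial $H \in B[W]$ with $B := \bQ[X_0, X_1, V, X_0^*, X_1^*, V^*]$; a quick specialization of the $W^0$-coefficient at $X_0 = 0$ and at $X_0^* = 0$ shows (using $a_{2,2}\neq 0$) that neither variable divides $H$, so $\tilde G$ and $H$ are associates in $A[W]$ and the task is irreducibility of $H$ in $B[W]$.

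For this final step, I would use the $\bZ^3$-multi-grading of $R_3$ with $\deg X_i = (1,0,i)$, $\deg X_i^* = (0,1,i)$, $\deg V = (1,0,1)$, $\deg V^* = (0,1,1)$, $\deg W = (0,0,1)$, under which $H$ is multi-homogeneous of tri-degree $(2,2,3)$. Two irreducibility facts are needed and provable by direct computation. First, the constant term $h_0 := H|_{W = 0}$ is $a_{2,2}$ times the polynomial
\[
P := X_0^* X_1^* (X_1^2 + V^2) - X_0 X_1 ((X_1^*)^2 + (V^*)^2),
\]
which, viewed as a quadratic in $V$, has discriminant of the form $X_0^* X_1 X_1^*$ times a polynomial not divisible by $X_0^*$; since $X_0^*$ then appears in the discriminant to an odd power, the discriminant is not a square in the relevant fraction field and $P$ is irreducible in $B$. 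Second, when $a_{1,1}\neq 0$ the leading $W^2$-coefficient factors as a constant times $X_0$, $X_0^*$, and $L := X_0^* X_1 - X_0 X_1^*$, and one checks that $L \nmid P$. In any multi-homogeneous factorization $H = H_1 H_2$, irreducibility of $h_0$ forces one of $H_1|_{W = 0}, H_2|_{W = 0}$ to be a unit of $A$; units of $A$ are precisely the multi-homogeneous elements of weight $0$ (that is, nonzero rational multiples of monomials in $X_0^{\pm 1}, (X_0^*)^{\pm 1}$, since $X_1, V, X_1^*, V^*, W$ all have weight $\ge 1$), and a weight comparison, together with a short case split according to whether $a_{1,1} = 0$ (which governs $\deg_W H$), eliminates every remaining multi-degree configuration. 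The main obstacle is executing this multi-graded factorization analysis cleanly; it hinges on the rigidity that $P$ irreducible plus the weight grading together impose on candidate factors, and on the non-symmetry hypothesis on $M$ being encoded in the coefficients of $H$.
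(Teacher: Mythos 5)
Your strategy is exactly the paper's: localize at the multiplicative set generated by $X_0$ and $X_0^*$ (Lemma~\ref{lemma:reg_seqVW} makes this harmless), eliminate $X_2$ and $X_2^*$ using $F$ and $F^*$, clear denominators to get the polynomial $H$, and then prove irreducibility of $H$ by analyzing its $W$-constant coefficient $H_0=H|_{W=0}$ using the $\bN^3$-grading. Your verification that $X_0\nmid H$ and $X_0^*\nmid H$ is a useful detail that the paper leaves implicit, and your proof that $H_0$ is irreducible (viewing it as a quadratic in $V$ and arguing that the discriminant contains $X_0^*$ to exactly the first power) is a valid variant of the paper's argument (which views $H_0$ as a quadratic in $V^*$ and observes that the product of its $(V^*)^2$-coefficient and its constant term has $X_0$ to exactly the first power); strictly you should also note that the coefficients of $V^2$ and $V^0$ in $H_0$ are coprime so that $H_0$ has no factor of $V$-degree zero, but that check is routine.

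The part you flag as the ``main obstacle'' is in fact the easy part, and it is where your write-up wanders. Once you know that one factor, say $H_1$, of a multi-homogeneous factorization $H=H_1H_2$ has $H_1|_{W=0}$ equal to a unit (a nonzero constant in $B[W]$, or a monomial $cX_0^k(X_0^*)^\ell$ if you insist on working in $A[W]$), you are done by a single observation: since $W$ has multi-degree $(0,0,1)$, each coefficient $H_{1,j}$ in $H_1=\sum_j H_{1,j}W^j$ is multi-homogeneous of multi-degree $(\deg H_1)-(0,0,j)$, so in particular $H_1|_{W=0}=H_{1,0}$, being nonzero, has the \emph{same} multi-degree as $H_1$. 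Hence $H_1$ has weight zero, and as all of $X_1,V,X_1^*,V^*,W$ have positive weight, $H_1$ is a monomial in $X_0,X_0^*$ alone (a constant, in the $B[W]$ version), i.e.\ a unit. There is nothing further to eliminate: no case split on $a_{1,1}$, no analysis of the $W^2$-coefficient, and no need for the claim $L\nmid P$. Those additions reflect that you did not see that the weight-zero observation closes the argument at once; with that observation inserted, your proposal becomes a complete proof along exactly the same lines as the paper's.
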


\begin{proof}
Let $S$ denote the multiplicative subset of $R_3$ generated by $X_0$
and $X_0^*$.  By Lemma \ref{lemma:reg_seqVW}, we have $(I_3\colon f)
= I_3$ for each $f\in S$.  So, it is equivalent to prove that
$S^{-1}I_3$ is a prime ideal in the localized ring $S^{-1} R_3$.
Since
\[
 X_0^{-1}F =  X_2-X_0^{-1}(X_1^2+V^2)
 \et
 (X_0^*)^{-1}F^* = X_2^*-(X_0^*)^{-1}((X_1^*)^2+(V^*)^2),
\]
this amounts simply to showing that $G$ is mapped to a prime element
under the ring endomorphism of $S^{-1} R_3$ sending $X_2$ to
$X_0^{-1}(X_1^2+V^2)$, $X_2^*$ to $(X_0^*)^{-1}
((X_1^*)^2+(V^*)^2)$, and all other variables to themselves.  The
image of $G$ takes the form $(X_0X^*_0)^{-1} H$ where
\[
 \begin{aligned}
 H =
    a_{1,1}
    &\left| \begin{matrix}
            X^*_0 &X^*_1 \\ X_0 &X_1
            \end{matrix}
     \right| X_0 X^*_0 W^2
  +  a_{1,2} \left| \begin{matrix}
            X^*_0X^*_1 &(X^*_1)^2+(V^*)^2 \\ X_0 &X_1
            \end{matrix}
     \right| X_0 W \\
 &+ a_{2,1} \left| \begin{matrix}
            X^*_0 &X^*_1 \\ X_0 X_1 &X_1^2+V^2
            \end{matrix}
     \right| X^*_0 W
 + a_{2,2} \left| \begin{matrix}
            X^*_0X^*_1 &(X^*_1)^2+(V^*)^2 \\ X_0 X_1 &X_1^2+V^2
            \end{matrix}
     \right|.
 \end{aligned}
\]
Since $R_3$ is a unique factorization domain, we are reduced to
showing that $H$ is an irreducible element of $R_3$.  Moreover,
since $H$ is multi-homogeneous of multi-degree $(2,2,3)$, it
suffices to prove that $H$ has no non-constant multi-homogeneous
divisor of multi-degree $<(2,2,3)$.  Let $H_0$ denote the constant
coefficient of $H$, viewed as a polynomial in $W$.  Since
$a_{2,2}\neq 0$, it is non-zero.  It takes the form $H_0 =
a(V^*)^2-b$ where $a$ and $b$ are relatively prime elements of
$R[V]$ such that $ab$ is not a square in $R[V]$ ($ab$ is divisible
by $X_0$ but not by $X_0^2$). Therefore $H_0$ is irreducible. If $A$
is a multi-homogeneous divisor of $H$ of multi-degree $<(2,2,3)$,
then the constant coefficient $A_0$ of $A$ (as a polynomial in $W$)
is a divisor of $H_0$.  Since $H_0$ is irreducible and
multi-homogeneous of the same multi-degree $(2,2,3)$ as $H$, it
follows that $A_0$ is a constant and therefore that $A$ itself is a
constant.
\end{proof}

\begin{acknowledgments}
The authors thank Daniel Daigle for several interesting discussions
on the topic of this paper, for the suggestion mentioned above, as
well as for suggesting the formalism of the ring of sequences $\gS$
in Section 2.
\end{acknowledgments}


\end{document}